\newtheorem{assumption}{Assumption}
\newtheorem{remark}{Remark}
\newcommand{\R}{\mathbb{R}}
\newcommand{\bx}{\bm{x}}
\title{Stabilized integrating factor Runge-Kutta method and unconditional preservation of maximum \\bound principle}
\author{Jingwei Li\thanks{College of Mathematics and System Science, Xinjiang University, Urumqi 830046, China. Current address: Laboratory of Mathematics and Complex Systems and School of Mathematical Sciences, Beijing Normal University, Beijing 100875, China
({\tt jingwei@bnu.edu.cn}). J. Li's work was partially supported by Excellent Doctor Innovation Program of Xinjiang University grant XJUBSCX-2017006 and National Natural Science Foundation of China grant 61962056.}
        \and Xiao Li\thanks{Department of Applied Mathematics, The Hong Kong Polytechnic University, Hung Hom, Kowloon, Hong Kong ({\tt xiao1li@polyu.edu.hk}). X. Li's work was partially supported by National Natural Science Foundation of China grant 11801024.}
        \and Lili Ju\thanks{Department of Mathematics, University of South Carolina, Columbia, SC 29208, USA ({\tt ju@math.sc.edu}). L. Ju's work was partially supported by US National Science Foundation grant DMS-1818438 and US Department of Energy grant DE-SC0020270.}
        \and Xinlong Feng\thanks{College of Mathematics and System Science, Xinjiang University, Urumqi 830046, China ({\tt fxlmath@xju.edu.cn}). X. Feng's work was partially supported by Research Fund from Key Laboratory of Xinjiang Province grant 2020D04002 and National Natural Science Foundation of China grants U19A2079 and 12071406.}}
\begin{document}

\maketitle

\begin{abstract}
Maximum bound principle (MBP) is an important property for a large class of semilinear parabolic equations, in the sense that
the time-dependent solution of the equation with appropriate initial and boundary conditions and nonlinear operator preserves  for all time a uniform pointwise bound in absolute value. It has been a challenging problem on how to design unconditionally MBP-preserving high-order accurate time-stepping schemes for these equations. In this paper, we combine the integrating factor Runge-Kutta (IFRK) method with the linear stabilization technique
to develop a stabilized IFRK (sIFRK) method, and successfully derive sufficient conditions for the proposed method to
preserve MBP unconditionally in the discrete setting.
We then elaborate some sIFRK schemes with up to the third-order accuracy,
which are proven to be unconditionally MBP-preserving by verifying these conditions.
In addition, it is shown that many classic strong stability-preserving sIFRK schemes
do not satisfy these conditions except the first-order one.
Extensive numerical experiments are also carried out to demonstrate  the performance of the proposed method.
\end{abstract}

\begin{keywords}
Semilinear parabolic equation, maximum bound principle, integrating factor Runge-Kutta method, stabilization, high-order method
\end{keywords}

\begin{AMS}
35B50, 35K55, 65M12, 65R20
\end{AMS}

\pagestyle{myheadings}
\thispagestyle{plain}
\markboth{J. LI, X. LI, L. JU, AND X. FENG}{sIFRK Method and Unconditional Preservation of MBP}

\section{Introduction}\label{sec1}

Let us consider a class of semilinear parabolic equations of the form 
\begin{eqnarray}
\label{eq1.1}
u_t = \mathcal{L} u + f[u],\qquad t>0,\ \bx\in\Omega,
\end{eqnarray}
where $u=u(t,\bx)$ is the time-dependent quantity of {interest} defined on an open, connected and bounded region $\Omega\subset\R^d$
with Lipschitz boundary $\partial\Omega$,
$\mathcal{L}$ is a linear, local (classic) or nonlocal, elliptic operator, and $f$ represents a nonlinear operator.
For some specific $\mathcal{L}$ and $f$,
the solution to \eqref{eq1.1} under appropriate \textcolor{black}{initial and} boundary conditions satisfies some important properties,
such as  existence of invariant sets and energy dissipation.
The existence of invariant sets also means that
the solution satisfies the maximum bound principle (MBP) \cite{Du4} in the sense that
if the initial data and/or the boundary value are pointwisely bounded by some specific constant in absolute value,
then the absolute value of the solution is also bounded by the same constant everywhere for all time.
A well-known  case is the classic  Allen-Cahn equation \cite{Allen,Evans}
with $\mathcal{L}$ given by the Laplace operator and $f[u]=u-u^3$ in \eqref{eq1.1},
where the constant bounding the solution is $1$.
In addition to the MBP, the Allen-Cahn equation also satisfies the energy dissipation, namely,
the solution decreases some free energy in time.
The energy dissipation is a common property shared by phase-field models,
which are typical cases of the semilinear parabolic equations \eqref{eq1.1}
derived as the gradient flows with respect to some specific free energy functional.
When designing numerical schemes for phase-field models,
the MBP and the energy dissipation are desired to be preserved in the discrete setting
for the equations possessing these two properties.

The MBP becomes an indispensable tool to study physical features of semilinear parabolic equations,
including the aspects of mathematical analysis and numerical simulation.
During the past several decades,
there have been many researches devoted to MBP-preserving numerical methods for equations like \eqref{eq1.1}.
For the spatial discretizations, a partial list includes
the works for finite element method \cite{Burman,Ciarlet2,Xiao1,Xiao2,Yang09},
finite difference method \cite{Chen,Ciarlet1,Varga}, and finite volume method \cite{Peng2,Peng1}.
For the temporal discretizations,
the stabilized linear semi-implicit methods were shown to preserve the MBP
unconditionally for the first-order {schemes} \cite{Shen,TaYa16} but only conditionally for the second-order {methods} \cite{HoLe20}.
Some nonlinear second-order schemes were also presented
to preserve conditionally the MBP for the Allen-Cahn type equations in \cite{HoTaYa17,HoXiJi20}.
The exponential time differencing (ETD) method \cite{Beylkin,Cox,Hochbruck}
was applied to the nonlocal Allen-Cahn equation together with a  linear stabilization technique
and the corresponding first- and second-order ETD schemes were proved to be unconditionally MBP-preserving in \cite{Du3}.
Later, an abstract framework on the MBP-preserving ETD schemes with linear stabilization
was established in \cite{Du4} for a  wide range of semilinear parabolic equations.
The ETD method comes from the variation-of-constants formula
with the nonlinear terms approximated by polynomial interpolations,
followed by exact integration of the resulting integrals involving matrix exponentials.
The stabilized ETD method is efficient and accurate for semilinear parabolic equations with stiff linear and nonlinear terms,
and thus has been successfully applied to various phase-field models recently (see e.g., \cite{Ju1,Ju2,Ju3,ZJZ16}).
{However, as shown in \cite{Du4},
the existing MBP-preserving ETD schemes are only up to second order in time,
while higher-order ETD schemes with stabilization fail to preserve the MBP.}
Therefore,  it is highly desirable to find an alternative choice
to develop higher-order time-stepping schemes which preserve the MBP unconditionally.

The integrating factor (IF) method is another widely-used temporal integration method based on the exponential integrators
and proposed to solve the ordinary differential equations with large Lipschitz constants \cite{Lawson}.
Different from the ETD method,
the IF method is derived by directly applying numerical quadratures to  the integrals in the variation-of-constants formula,
and has been also successfully used for many scientific applications \cite{Hochbruck,Liu,Nie}.
In \cite{Isherwood}, the strong stability-preserving (SSP) integrating factor Runge-Kutta (IFRK) method is proposed for solving \eqref{eq1.1},
where the concept of SSP \cite{Gottlieb} means that
\[
\|u^{n+1}\| \le \|u^n\|,
\]
if the nonlinear operator $f$ satisfies
\begin{equation}
\label{intro_f}
\|u^{n}+\tau f[u^n]\|\leq\|u^n\|, \quad \forall\,\tau\in[0, \tau_{FE}],
\end{equation}
for some $\tau_{FE}>0$ and the linear operator $\mathcal{L}$ satisfies
\begin{equation}
\label{intro_L}
\|e^{\tau\mathcal{L}}\|\leq1, \quad \forall\,\tau\geq0.
\end{equation}
It is observed that the restriction on the time-step size of IFRK methods to be SSP
only comes from the nonlinear term
while the restrictions from both linear and nonlinear parts must be enforced for the standard RK method.
This implies that the IFRK method can be more efficient than the standard RK method,
especially in the case that the linear part of the equation \eqref{eq1.1} is highly stiff.

An initial exploration of high-order MBP-preserving schemes based on the IFRK method was recently made in \cite{Ju4}.
Resorting to  the SSP property  or similarly  the total variation bounded (TVB) property \cite{FeSp05,HuSp11},
several  MBP-preserving IFRK schemes up to the fourth-order accuracy were presented
under the appropriate variants of \eqref{intro_f} and \eqref{intro_L}.
However, all these schemes still need certain constraints on the {time-step} size, which
comes from \eqref{intro_f}.
In this paper, we would like  to completely remove the constraints on the {time-step} size
and develop unconditionally MBP-preserving IFRK schemes.
To this end, one of the key ingredient is the application of the linear stabilization technique.
The stabilization was first introduced in \cite{XuTang} in order
to improve the energy stability of the linear semi-implicit Euler scheme for the phase-field model.
The main idea is to add and subtract a linear term $\kappa u$ in the original equation,
where $\kappa\ge 0$ is a stabilizing constant,
and to make the linear part, combined with the term $\kappa u$, {dominates} the nonlinear part
by choosing the value of $\kappa$ appropriately.
As a result, the stability is improved without sacrificing the linearity of the original semi-implicit scheme.
Apart from the applications of the stabilization technique mentioned in the previous paragraphs,
there {has} been a large amount of literature on the stabilized numerical schemes for phase-field models,
see \cite{FeTaYa13,ShYa10b} and the references therein.

The main contribution of our work in this paper is
to develop  a family of stabilized IFRK (sIFRK) time-stepping schemes for the semilinear parabolic equation \eqref{eq1.1} with unconditional preservation of the MBP. In particular,
we derive sufficient {conditions} for the sIFRK method to preserve the MBP without any constraint on the {time-step} size,
and present some examples of such sIFRK {method} with up to {third-order accuracy} in time.
In addition, we also show {that} the stabilized {versions} of many classic SSP-IFRK schemes developed in \cite{Isherwood}
do not satisfy these conditions except the first-order one.

The rest of the paper is organized as follows.
In Section \ref{sec2}, we briefly review the abstract framework developed  in \cite{Du4} for semilinear parabolic equations,
including the formulation of an equivalent form of \eqref{eq1.1} with linear stabilization  and
the conditions on the linear and nonlinear operators in order to possess the MBP.
In Section \ref{sec3}, we propose the sIFRK method in the general Butcher form and derive the sufficient {conditions} such that
the method can preserve the MBP unconditionally.
Convergence analysis of the sIFRK method is then provided,
as well as energy boundedness.
In addition, we also investigate the SSP-sIFRK method and the corresponding sufficient condition for
unconditional MBP preservation.
Some unconditionally MBP-preserving sIFRK schemes with respectively first-, second-, and third-order temporal accuracies
are then presented and discussed in {detail} in Section \ref{sec4}.
In Section \ref{sec5},  various numerical experiments, including 2D and 3D cases, are performed
to verify the convergence and the unconditional MBP preservation of the proposed  method.
Some concluding remarks are finally given in Section \ref{sec6}.

\section{Overview on maximum bound principle}\label{sec2}

In this section, we give a brief review of the abstract framework established in \cite{Du4}
for analysis of  the maximum bound principle (MBP) of semilinear parabolic equations with the form of \eqref{eq1.1}.
The basic assumptions for the operators will be given along with the main results while all details will be omitted.

For simplicity,  let us consider the semilinear parabolic equation \eqref{eq1.1}
with $\mathcal{L}: C^2(\overline\Omega)\to C(\Omega)$ being the Laplace operator
(or a  second-order elliptic differential operator \cite{Evans00}), subject to the initial condition
\begin{equation}
\label{initial}
u(0,\bx)=u_0(\bx), \quad \bx\in {\overline\Omega}
\end{equation}
and the homogeneous Neumann or the periodic boundary condition (only for a rectangular domain $\Omega=\prod_{i=1}^d(a_i,b_i)$) on $\partial\Omega$.
It is well-known from classic analysis \cite{EnNa00} that
the operator $\mathcal{L}$ generates a  contraction semigroup $\{S_{\mathcal{L}}(t)\}_{t\geq 0}$
with respect to the supremum norm on the subspace of $C(\overline\Omega)$ that satisfies such boundary condition. Next we make the following assumption on the operator $f$.
\begin{assumption}
\label{assump_nonlinear}
The nonlinear operator $f$ acts as a composite function induced by a given one-variable  continuously differentiable function $f_0:\mathbb{R}\to \mathbb{R}$, i.e.,
\begin{eqnarray}
\label{eq2.4}
f[w](\bx)=f_0(w(\bx)),\qquad \forall\,w\in C(\Omega),\ \forall\,\bx\in\Omega,
\end{eqnarray}
and there exists a  constant $\gamma>0$ such that $f_0(\gamma)\leq 0 \leq f_0(-\gamma)$.
\end{assumption}

Then we have
the following result on the MBP for the semilinear parabolic equation \eqref{eq1.1}.

\begin{theorem}
\label{theorem_MBP}
{\rm \cite{Du4}}
Let $T>0$ be a constant.
Under Assumptions \ref{assump_nonlinear}, if $\|u_0\|_{C(\overline{\Omega})}\leq\gamma$,
then the equation \eqref{eq1.1} subject to the homogeneous Neumann or periodic boundary condition
has a unique solution $u\in C([0,T]\times\overline{\Omega})$ and
it satisfies $\|u(t)\|_{C(\overline{\Omega})}\leq\gamma$ for all $t\in[0,T]$.
\end{theorem}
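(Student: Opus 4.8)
The plan is to prove Theorem~\ref{theorem_MBP} by reformulating the equation with a linear stabilization term so that the resulting semiflow becomes an explicit contraction that maps the ball $\{\|w\|\le\gamma\}$ into itself, and then pass the bound from the discrete Picard iteration to the continuous solution. First I would pick a stabilizing constant $\kappa\ge 0$ large enough that the map $w\mapsto \kappa w + f_0(w)$ is nondecreasing on $[-\gamma,\gamma]$; this is possible because $f_0$ is continuously differentiable, hence $f_0'$ is bounded on the compact interval $[-\gamma,\gamma]$, so any $\kappa\ge \max_{|s|\le\gamma}|f_0'(s)|$ works. Rewrite \eqref{eq1.1} as $u_t=(\mathcal{L}-\kappa)u + \mathcal{N}[u]$ with $\mathcal{N}[w]=\kappa w + f[w]$, and use the variation-of-constants formula
\begin{equation*}
u(t) = S_{\mathcal{L}-\kappa}(t)u_0 + \int_0^t S_{\mathcal{L}-\kappa}(t-s)\,\mathcal{N}[u(s)]\,ds,
\end{equation*}
where $S_{\mathcal{L}-\kappa}(t)=e^{-\kappa t}S_{\mathcal{L}}(t)$ is still a contraction semigroup on $C(\overline\Omega)$ (the subspace respecting the boundary condition).

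Next I would establish local existence and uniqueness by a standard contraction-mapping (Banach fixed point) argument in $C([0,\delta]\times\overline\Omega)$ for small $\delta$, using that $\mathcal{N}$ is locally Lipschitz since $f_0\in C^1$; this gives $u\in C([0,\delta]\times\overline\Omega)$. For the bound, the key observation from the choice of $\kappa$ and Assumption~\ref{assump_nonlinear} is that $\mathcal{N}$ maps the ball of radius $\gamma$ into itself in the following sense: if $\|w\|_{C(\overline\Omega)}\le\gamma$, then since $s\mapsto\kappa s+f_0(s)$ is nondecreasing on $[-\gamma,\gamma]$ we get $\kappa w(\bx)+f_0(w(\bx))\le \kappa\gamma + f_0(\gamma)\le\kappa\gamma$ and similarly $\ge -\kappa\gamma$, so $\|\mathcal{N}[w]\|_{C(\overline\Omega)}\le\kappa\gamma$. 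Consequently, on any interval where $\|u(s)\|\le\gamma$ holds,
\begin{equation*}
\|u(t)\|_{C(\overline\Omega)} \le e^{-\kappa t}\|u_0\| + \int_0^t e^{-\kappa(t-s)}\kappa\gamma\,ds \le e^{-\kappa t}\gamma + (1-e^{-\kappa t})\gamma = \gamma,
\end{equation*}
using $\|u_0\|\le\gamma$ and the contractivity of $S_{\mathcal{L}}$. One then runs this together with the fixed-point iteration: the Picard map preserves the set of functions bounded by $\gamma$, so its fixed point (the local solution) also satisfies $\|u(t)\|\le\gamma$ on $[0,\delta]$.

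Finally I would globalize by a continuation argument: since the a priori bound $\|u(t)\|_{C(\overline\Omega)}\le\gamma$ holds on the maximal interval of existence and $\mathcal{N}$ is Lipschitz on bounded sets, the solution cannot blow up, so it extends to all of $[0,T]$ while retaining the bound at each step. I expect the main obstacle to be the continuous-in-time bootstrap: one must argue carefully that the estimate $\|u(t)\|\le\gamma$ is not merely propagated from $t=0$ for a short time but holds on the whole existence interval, e.g.\ by showing the set $\{t\in[0,T_{\max}): \|u(s)\|\le\gamma \ \forall s\le t\}$ is both open and closed in $[0,T_{\max})$, or equivalently by embedding the bound directly into the fixed-point space on each subinterval and noting the resulting uniform-in-time control prevents finite-time blow-up. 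The semigroup contractivity \eqref{intro_L}-type property and the monotonicity arising from the stabilization are exactly the two ingredients that make this closed.
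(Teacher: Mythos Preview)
The paper does not actually prove Theorem~\ref{theorem_MBP}: it is stated with a citation to \cite{Du4} and the authors explicitly say that ``all details will be omitted.'' So there is no proof in the paper to compare against line by line.

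That said, your proposal is correct and is precisely in the spirit of the framework the paper invokes. The stabilization you introduce is exactly the device of \eqref{eq2.6}--\eqref{assump_kp}, and your key pointwise bound $\|\mathcal{N}[w]\|\le\kappa\gamma$ whenever $\|w\|\le\gamma$ is Lemma~\ref{L2.1}(i). The variation-of-constants estimate
\[
\|u(t)\|\le e^{-\kappa t}\gamma + \int_0^t e^{-\kappa(t-s)}\kappa\gamma\,ds = \gamma
\]
is the continuous-time analogue of the discrete argument the paper later uses in Theorem~\ref{thm_mbp}. Your local existence via Banach fixed point plus continuation from the a~priori bound is the standard route and is what \cite{Du4} does. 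One small refinement: to make the bootstrap airtight you should run the contraction mapping directly on the closed set $\{v\in C([0,\delta];C(\overline\Omega)):\|v(t)\|\le\gamma\ \forall t\}$, which is invariant under the Picard map by your computation; then the fixed point automatically lies in that set and no separate open/closed argument is needed. With that phrasing the globalization is immediate since $\delta$ depends only on $\kappa$ (the Lipschitz constant of $\mathcal{N}$ on the $\gamma$-ball), not on the initial time.
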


The continuity of a function defined on a set $D\subset\R^d$ is defined as follows \cite{Rudin76}:
\begin{equation*}
\text{$w$ is continuous at $\bx^*\in D$}\iff
\text{$\forall\,\bx_k\to \bx^*$ in $D$ implies $w(\bx_k)\to w(\bx^*)$}.
\end{equation*}
Then under the same analysis framework, the {MBP of Theorem \ref{theorem_MBP}} can be further extended to the case of  finite dimensional operators in space \cite{Du4}, such  as discrete approximations
of $\mathcal{L}$, denoted by $\mathcal{L}^h$,  in which the domain of a function is the set of all spatial grid points (boundary and interior points), denoted by $X$. The corresponding  space-discrete equation of \eqref{eq1.1} with $\mathcal{L}^h$ becomes
an ordinary differential equation (ODE) system taking the same form:
\begin{eqnarray}
\label{eq1.1dis}
u_t = \mathcal{L}^h u + f[u],\qquad t>0,\ \bx\in X^*
\end{eqnarray}
with $u(0,\bx)=u_0(\bx)$ for any $\bx\in X$, where $X^*=X$ for the homogeneous Neumann boundary condition and $X^*=X\cap\overline\Omega_+$ with $\overline\Omega_+=\prod_{i=1}^d(a_i,b_i]$
for the periodic one.
We further assume that the discrete
operator $\mathcal{L}^h$ satisfies the following assumption.
\begin{assumption}
\label{assump_linear}
For any $w\in C(X)$ and $\bx_0\in X^*$, if
\[
w(\bx_0) = \max_{\bx\in{X}} w(\bx),
\]
then $\mathcal{L}^h w(\bx_0)\leq0$.
\end{assumption}

The continuous analogue of Assumption \ref{assump_linear} is obviously satisfied
by  the second-order elliptic differential operator $\mathcal{L}$.
Assumption \ref{assump_linear} guarantees that $\mathcal{L}^h$  generates a contraction semigroup $\{S_{\mathcal{L}^h}(t)\}_{t\geq 0}$  on the subspace of $C(X)$ satisfying the homogeneous Neumann (or periodic) boundary condition.
It is easy to verify that such assumption holds for the discrete approximation of  $\mathcal{L}$
by the classic central difference or mass-lumping finite element method.
Note that in these cases, $\mathcal{L}^h$ can be simply regarded as a square matrix and $S_{\mathcal{L}^h}(t)=e^{t\mathcal{L}^h}$ as  a matrix exponential.
If both Assumptions \ref{assump_nonlinear} and \ref{assump_linear} hold, then  the
 space-discrete problem  of \eqref{eq1.1dis}   has a unique solution
 $u\in C([0,T];C(X))$ satisfying the MBP \cite{Du4}.

\begin{remark}
\label{remark_nonlocal}
As studied in \cite{Du4}, the linear operator $\mathcal{L}$ in  \eqref{eq1.1}
could  be similarly generalized to the nonlocal diffusion operator \cite{Du19} and
the fractional Laplace operator \cite{FeRo16}, and the results of Theorem \ref{theorem_MBP} still hold.
Due to the nonlocality of these two operators, the corresponding boundary conditions are now volume constraints. For the nonlocal diffusion operator, the boundary condition is usually imposed on $\Omega_c$,
a closed and bounded set surrounding $\Omega$ with $\partial\Omega\subset\Omega_c$;
for the fractional Laplace operator, the boundary condition is imposed on $\R^d\setminus\Omega$.
\end{remark}

Let us introduce an artificial stabilizing constant $\kappa>0$.
The space-discrete equation \eqref{eq1.1dis} can be rewritten in the equivalent form
\begin{equation}
\label{eq2.6}
u_t  = \mathcal{L}^h_\kappa u + \mathcal{N}[u],
\end{equation}
where  $\mathcal{L}^h_\kappa =  \mathcal{L}^h - \kappa\mathcal{I}$ and $\mathcal{N}=\kappa\mathcal{I}+f$.
According to \eqref{eq2.4} in Assumption \ref{assump_nonlinear}, we know
\begin{equation*}
\mathcal{N}[w](\bx)=N_0(w(\bx)),\qquad \forall\,w\in C(\Omega),\ \forall\,\bx\in\Omega,
\end{equation*}
where $N_0(\xi)=\kappa\xi+f_0(\xi)$ for $\xi\in \mathbb{R}$.
The stabilizing constant $\kappa$ is required to satisfy
\begin{eqnarray}
\label{assump_kp}
\kappa\geq\max_{|\xi|\leq\gamma}|f_0'(\xi)|,
\end{eqnarray}
which always can be reached since $f_0$ is continuously differentiable.
{Then, the following lemma can be proved.}

\begin{lemma}
\label{L2.1}
{\rm \cite{Du4}}
Under Assumption \ref{assump_nonlinear} and the requirement \eqref{assump_kp}, it holds that
\begin{itemize}
  \item[{\rm (i)}] $|N_0(\xi)|\leq \kappa\gamma$ for any $\xi\in[-\gamma,\gamma]$;
  \item[{\rm (ii)}] $|N_0(\xi_1)-N_0(\xi_2)|\leq 2\kappa|\xi_1-\xi_2|$ for any $\xi_1,\xi_2\in[-\gamma,\gamma]$.
\end{itemize}
\end{lemma}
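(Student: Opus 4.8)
The plan is to reduce both statements to elementary one-variable calculus applied to $N_0(\xi)=\kappa\xi+f_0(\xi)$ on the interval $[-\gamma,\gamma]$, exploiting the sign condition on $f_0$ at $\pm\gamma$ from Assumption \ref{assump_nonlinear} together with the lower bound \eqref{assump_kp} on the stabilizing constant $\kappa$.

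For part (i), I would first observe that $N_0$ is nondecreasing on $[-\gamma,\gamma]$. Indeed, since $f_0\in C^1(\R)$, the function $N_0$ is continuously differentiable with $N_0'(\xi)=\kappa+f_0'(\xi)$, and \eqref{assump_kp} gives $f_0'(\xi)\geq -|f_0'(\xi)|\geq -\max_{|\eta|\leq\gamma}|f_0'(\eta)|\geq -\kappa$ for every $\xi\in[-\gamma,\gamma]$, so $N_0'(\xi)\geq 0$ there. Consequently $N_0(-\gamma)\leq N_0(\xi)\leq N_0(\gamma)$ for all $\xi\in[-\gamma,\gamma]$, and it remains only to bound the two endpoint values. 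Since $f_0(\gamma)\leq 0$ we get $N_0(\gamma)=\kappa\gamma+f_0(\gamma)\leq\kappa\gamma$, and since $f_0(-\gamma)\geq 0$ we get $N_0(-\gamma)=-\kappa\gamma+f_0(-\gamma)\geq-\kappa\gamma$. Chaining these inequalities yields $-\kappa\gamma\leq N_0(\xi)\leq\kappa\gamma$, i.e. $|N_0(\xi)|\leq\kappa\gamma$ for all $\xi\in[-\gamma,\gamma]$.

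For part (ii), I would apply the mean value theorem: for any $\xi_1,\xi_2\in[-\gamma,\gamma]$ there is some $\eta$ lying between them (hence in $[-\gamma,\gamma]$) with $N_0(\xi_1)-N_0(\xi_2)=N_0'(\eta)(\xi_1-\xi_2)$. By the triangle inequality and \eqref{assump_kp}, $|N_0'(\eta)|=|\kappa+f_0'(\eta)|\leq\kappa+|f_0'(\eta)|\leq\kappa+\kappa=2\kappa$, which gives the claimed Lipschitz estimate $|N_0(\xi_1)-N_0(\xi_2)|\leq 2\kappa|\xi_1-\xi_2|$. Combining this with the monotonicity established in part (i), one in fact has $0\leq N_0'(\eta)\leq 2\kappa$, so $N_0$ is simultaneously nondecreasing and $2\kappa$-Lipschitz on $[-\gamma,\gamma]$.

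I do not expect any substantial obstacle here; the argument is short. The one point that deserves emphasis is that everything hinges on the interplay between where the stabilizing term sits (inside $N_0$, not inside $\mathcal{L}^h_\kappa$ alone) and the boundary-sign hypothesis $f_0(\gamma)\leq 0\leq f_0(-\gamma)$: it is precisely this combination, via the choice \eqref{assump_kp}, that forces $N_0$ to map $[-\gamma,\gamma]$ into $[-\kappa\gamma,\kappa\gamma]$, which is the property exploited later when propagating the discrete maximum bound through the integrating-factor Runge--Kutta stages.
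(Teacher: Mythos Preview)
Your argument is correct. Note, however, that the paper does not actually prove this lemma: it is quoted from \cite{Du4} and stated without proof, so there is no ``paper's own proof'' to compare against. The short calculus argument you give---showing $N_0'\geq 0$ on $[-\gamma,\gamma]$ via \eqref{assump_kp}, bounding the endpoint values using the sign condition $f_0(\gamma)\leq 0\leq f_0(-\gamma)$, and then applying the mean value theorem for the Lipschitz bound---is the standard way to establish this result and is precisely the proof one finds in the cited reference.
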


This lemma plays an important role on the  MBP analysis of time integrations of the equation \eqref{eq1.1}
and its space-discrete system \eqref{eq1.1dis}.
It was shown in \cite{Du3,Du4} that, when applied to the equivalent equation  \eqref{eq2.6} instead of the original  one \eqref{eq1.1dis}, the first- and second-order exponential time differencing (ETD) schemes, ETD1 and ETDRK2 \cite{Cox,ZJZ16},
satisfy the discrete MBP without any restriction on the {time-step} size. The resulting schemes are called
stabilized ETD  schemes for solving the equation \eqref{eq1.1}.
However, such a result cannot be generalized to higher-order (order greater than two) ETD schemes \cite{Du4}.

\section{Unconditionally MBP-preserving stabilized IFRK methods}\label{sec3}

From now on, we suppose all assumptions stated in the previous section hold,
and focus our discussions on time-stepping schemes of the space-discrete system \eqref{eq1.1dis}.

\subsection{Stabilized IFRK schemes and unconditional MBP preservation}\label{sec3.1}

Multiplying both sides of \eqref{eq2.6} by $e^{-t\mathcal{L}^h_\kappa}$ (as exponential integrating factor), we have
\begin{eqnarray*}
e^{-t\mathcal{L}^h_\kappa}(u_t - \mathcal{L}^h_\kappa u) = e^{-t\mathcal{L}^h_\kappa}\mathcal{N}[u],
\end{eqnarray*}
and thus
\begin{eqnarray*}
(e^{-t\mathcal{L}^h_\kappa}u)_t = e^{-t\mathcal{L}^h_\kappa} \mathcal{N}[u].
\end{eqnarray*}
A transformation of variable $w=e^{-t\mathcal{L}^h_\kappa}u$ gives us  the  system
\begin{eqnarray*}
w_t=e^{-t\mathcal{L}^h_\kappa}\mathcal{N}[e^{t\mathcal{L}^h_\kappa}w]=:G(t,w),
\end{eqnarray*}
which is then evolved forward in time from $t_n$ to $t_{n+1}$ using the standard {explicit $s$-stage Runge-Kutta (RK) method \cite{Hairer} ($s$ is a positive integer)}, that is,
\begin{subequations}
\label{rk_w}
\begin{align}
w^{(0)} & = w^n, \\
w^{(i)} & = w^n + \tau \sum_{j=0}^{i-1} a_{ij} G(t_n+c_j\tau,w^{(j)}), \quad  1\leq i\leq s,\label{rk_w2} \\
w^{n+1} & = w^{(s)},
\end{align}
\end{subequations}
where $\tau=t_{n+1}-t_n$ is the uniform time-step size,
\begin{equation}
\label{coef_aij}
a_{ij} \ge 0,\quad  1\leq i\leq s, \ 0\leq j\leq i-1,
\end{equation}
and
\begin{equation}
\label{coef_ci}
c_0=0, \quad  c_i = \sum_{j=0}^{i-1} a_{ij},\ 1\leq i\leq s.
\end{equation}
For the sake of consistency, we also require that $c_s=1$ \cite{Hairer}.
Then, transforming the variable $w$ back to $u$ yields
\begin{subequations}
\label{ifrk_general}
\begin{align}
u^{(0)} & = u^n, \\
u^{(i)} & = e^{c_i\tau\mathcal{L}^h_\kappa} u^n
+ \tau \sum_{j=0}^{i-1} a_{ij} e^{(c_i-c_j)\tau\mathcal{L}^h_\kappa}\mathcal{N}[u^{(j)}], \quad 1\leq i\leq s,\label{ifrk_general2} \\
u^{n+1} & = u^{(s)}.
\end{align}
\end{subequations}
The scheme \eqref{ifrk_general} with the constraints \eqref{coef_aij} and  \eqref{coef_ci} is called  the stabilized integrating factor Runge-Kutta (sIFRK) method for solving the space-discrete system \eqref{eq1.1dis} of the equation \eqref{eq1.1},
in response to the standard IFRK method (i.e., \eqref{ifrk_general} with $\mathcal{L}^h_{\kappa}=\mathcal{L}^h$).

\begin{remark}
Note that the coefficients $\{a_{ij}\}$ and $\{c_i\}$ in  \eqref{ifrk_general}
have slightly different meanings from the usual Butcher table (see, e.g., \cite{Hairer}).
The formula \eqref{rk_w} expresses the RK method in a unified form for each stage, including the last one for $w^{n+1}$, and
the classic Butcher table corresponding to  \eqref{rk_w} takes the following representation:
\begin{equation}
\label{butcher_tab}
\begin{tabular}{c|cccccc}
$c_0$ & $0$ & $0$ & $0$ & $\cdots$ & $0$ & $0$ \\
$c_1$ & $a_{10}$ & $0$ & $0$ & $\cdots$ & $0$ & $0$ \\
$c_2$ & $a_{20}$ & $a_{21}$ & $0$ & $\cdots$ & $0$ & $0$ \\
$\vdots$ & $\vdots$ & $\vdots$ & $\vdots$ & $\ddots$ & $\vdots$ & $\vdots$ \\
$c_{s-2}$ & $a_{s-2,0}$ & $a_{s-2,1}$ & $a_{s-2,2}$ & $\cdots$ & $0$ & $0$ \\
$c_{s-1}$ & $a_{s-1,0}$ & $a_{s-1,1}$ & $a_{s-1,2}$ & $\cdots$ & $a_{s-1,s-2}$ & $0$ \\
\hline
$c_s$ & $a_{s0}$ & $a_{s1}$ & $a_{s2}$ & $\cdots$ & $a_{s,s-2}$ & $a_{s,s-1}$
\end{tabular}
\end{equation}
We still  call \eqref{ifrk_general}  the Butcher form of the sIFRK method.
\end{remark}

Now, we investigate the MBP preservation of the sIFRK method \eqref{ifrk_general}.
To this end, we use the notation $\|\cdot\|$ to represent the vector $\infty$-norm,
and then define the induced matrix $\infty$-norm
as $\|e^{\tau\mathcal{L}^h}\| = \sup\limits_{\|w\|=1}{\|e^{\tau\mathcal{L}^h}w\|}$.
Since $\{e^{\tau\mathcal{L}^h}\}_{\tau\ge0}$ is a contraction semigroup,
which means $\|e^{\tau\mathcal{L}^h}\|\leq 1$ for any $\tau\geq 0$,
and thus,
\begin{equation}
\label{lem_L}
\|e^{\tau\mathcal{L}^h_\kappa}\| \le e^{-\kappa\tau},\qquad\forall\,\tau\geq 0.
\end{equation}

In the following, we present our main result on sufficient conditions for the sIFRK method \eqref{ifrk_general}
in the Butcher form to be unconditionally MBP-preserving.

\begin{theorem}
\label{thm_mbp}
Suppose that  the Butcher table \eqref{butcher_tab} of the sIFRK method satisfies
\begin{itemize}
\item[{\rm (i)}] the property of  nondecreasing abscissas, i.e., $0=c_0\leq c_1\leq c_2\leq \cdots\leq c_s=1$;
\item[{\rm (ii)}] for  $i=1,2,\dots,s$, the function $g_i(x):=e^{-c_ix}+x\sum_{j=0}^{i-1}a_{ij}e^{-(c_i-c_j)x}$
is nonincreasing on $[0,\infty)$.
\end{itemize}
Then, if $\|u^{n}\|\leq\gamma$,
the solution $u^{n+1}$ obtained from \eqref{ifrk_general} always satisfies $\|u^{n+1}\|\leq\gamma$ for any $\tau> 0$.
\end{theorem}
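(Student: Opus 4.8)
The plan is to prove, by induction on the stage index $i$, that every internal stage of \eqref{ifrk_general} inherits the pointwise bound of the preceding ones: assuming $\|u^{(j)}\|\le\gamma$ for all $0\le j\le i-1$, one deduces $\|u^{(i)}\|\le\gamma$. Since $u^{(0)}=u^n$ satisfies the hypothesis and $u^{n+1}=u^{(s)}$, this chain of implications yields the desired conclusion.

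For the inductive step, I would take the vector $\infty$-norm of \eqref{ifrk_general2}, apply the triangle inequality and the submultiplicativity of the induced matrix norm, and use the nonnegativity \eqref{coef_aij} of the $a_{ij}$ to get
\[
\|u^{(i)}\|\le\|e^{c_i\tau\mathcal{L}^h_\kappa}\|\,\|u^n\|+\tau\sum_{j=0}^{i-1}a_{ij}\,\|e^{(c_i-c_j)\tau\mathcal{L}^h_\kappa}\|\,\|\mathcal{N}[u^{(j)}]\|.
\]
Condition (i) ensures $c_i\ge c_0=0$ and $c_i-c_j\ge0$ for every $j\le i-1$, so each exponential factor has the form $e^{\sigma\tau\mathcal{L}^h_\kappa}$ with $\sigma\ge0$, and \eqref{lem_L} applies, giving $\|e^{c_i\tau\mathcal{L}^h_\kappa}\|\le e^{-c_i\kappa\tau}$ and $\|e^{(c_i-c_j)\tau\mathcal{L}^h_\kappa}\|\le e^{-(c_i-c_j)\kappa\tau}$. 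Moreover, the inductive hypothesis places every component of $u^{(j)}$ (for $j\le i-1$) in $[-\gamma,\gamma]$, so Lemma \ref{L2.1}(i) gives $\|\mathcal{N}[u^{(j)}]\|\le\kappa\gamma$. Substituting these bounds and writing $x:=\kappa\tau\ge0$, the right-hand side collapses to exactly
\[
\gamma\Bigl(e^{-c_ix}+x\sum_{j=0}^{i-1}a_{ij}e^{-(c_i-c_j)x}\Bigr)=\gamma\,g_i(x).
\]

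Finally, since $g_i(0)=1$, condition (ii) (that $g_i$ is nonincreasing on $[0,\infty)$) forces $g_i(x)\le g_i(0)=1$ for every $x\ge0$; hence $\|u^{(i)}\|\le\gamma$, which closes the induction. Note that no restriction on $\tau$ is used at any point, which is precisely why the preservation is unconditional. I expect the only delicate part to be the bookkeeping that certifies all the exponents $c_i$ and $c_i-c_j$ remaining after estimating the matrix exponentials are nonnegative — so that \eqref{lem_L} is legitimately applicable — and that the resulting scalar majorant is exactly the function $g_i$ whose monotonicity is hypothesized; once the problem has been reduced to the scalar inequality $g_i(\kappa\tau)\le1$, everything else is routine.
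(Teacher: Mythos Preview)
Your proposal is correct and follows essentially the same approach as the paper's own proof: induction on the stage index, estimating each $\|u^{(i)}\|$ via the triangle inequality, the contraction bound \eqref{lem_L} (whose applicability is secured by condition~(i)), and Lemma~\ref{L2.1}(i), then recognizing the resulting scalar majorant as $\gamma\,g_i(\kappa\tau)$ and invoking condition~(ii) together with $g_i(0)=1$. The only minor addition in your write-up is the explicit mention of the nonnegativity \eqref{coef_aij} of the $a_{ij}$, which the paper uses silently.
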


\begin{proof}
By using the condition (i) and the inequality \eqref{lem_L},
it is easy to show  that, for each $i$ in \eqref{ifrk_general2}, we have
\begin{align}\label{eqind}
\|u^{(i)}\|
& \leq \|e^{c_i\tau\mathcal{L}^h_\kappa}\| \|u^n\|
+ \tau\sum_{j=0}^{i-1} a_{ij} \|e^{(c_i-c_j)\tau\mathcal{L}^h_\kappa}\| \|\mathcal{N}[u^{(j)}]\| \nonumber\\
& \leq e^{-c_i\kappa\tau} \|u^n\| + \tau \sum_{j=0}^{i-1} a_{ij} e^{-(c_i-c_j)\kappa\tau} \|\mathcal{N}[u^{(j)}]\|.
\end{align}
Let us assume $\|u^{(j)}\|\leq\gamma$ for all $j\le i-1$. Then,
 we can derive from Lemma \ref{L2.1} and \eqref{eqind} that
\begin{eqnarray}\label{eqres}
\|u^{(i)}\| \leq e^{-c_i\kappa\tau}\gamma + \tau\sum_{j=0}^{i-1}a_{ij}e^{-(c_i-c_j)\kappa\tau}\kappa\gamma
= g_i(\kappa\tau) \gamma.
\end{eqnarray}
Based on the condition (ii), we have $g_i(\kappa\tau)\le g_i(0)=1$, and consequently we have $\|u^{(i)}\|\leq\gamma$ from \eqref{eqres}.
By induction, we obtain $\|u^{(i)}\|\le\gamma$ for $i=1,2,\dots,s$,
and thus, $\|u^{n+1}\|\le\gamma$.
\end{proof}

\begin{remark}
\label{rmk_intstage}
It is observed from the proof that, under the conditions of Theorem \ref{thm_mbp},
if $\|u^n\|\leq\gamma$, then all internal stages of the sIFRK method are also bounded in the norm by $\gamma$, that is, $\|u^{(i)}\|\leq\gamma$ for $1\leq i\leq s$.
Actually, this bound could be sharper, for example,
$\|u^{n+1}\|$ is actually bounded by $g_s(\kappa\tau)\gamma$ instead of $\gamma$.
\end{remark}

Later in Section \ref{sec4}, we will present some examples of unconditionally MBP-preserving sIFRK schemes up to the third-order temporal accuracy  by verifying the conditions  (i) and (ii) in Theorem \ref{thm_mbp}.

\subsection{Convergence  analysis and energy stability}\label{sec3.2}

In the theory of numerical ODEs,
the RK method (\ref{rk_w}) is often called an $s$-stage, $p$th-order method
if the Butcher table (\ref{butcher_tab}) satisfies some appropriate order conditions in the truncation error, see, e.g., \cite{Hairer}.
For simplicity, instead of introducing these order conditions,
we assume that the RK method (\ref{rk_w}) with coefficients \eqref{butcher_tab} possesses the accuracy of order $p$.
Based on this assumption, we now present the error estimates of the sIFRK method (\ref{ifrk_general}).

\begin{theorem}
\label{errthm}
For a fixed $T>0$,
assume that the function $f_0$ in \eqref{eq2.4} is $p$-times continuously differentiable on $[-\gamma,\gamma]$ and
the exact solution $u(t)$ of the space-discrete equation \eqref{eq1.1dis} with the initial data $u_0$ is sufficiently smooth in $[0,T]$.
Let $\{u^n\}$ be the sequence generated by the sIFRK method \eqref{ifrk_general} for \eqref{eq1.1dis}  with $u^0=u_0$.
Under the conditions of Theorem \ref{thm_mbp}, if $\|u_0\|\le\gamma$, then
we have, for any $\tau>0$,
\begin{equation*}
\|u(t_n)-u^n\| \leq C (e^{2\kappa st_n}-1)\tau^p,
\end{equation*}
where the constant $C>0$ is independent of $\tau$.
\end{theorem}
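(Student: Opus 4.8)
The plan is to prove the error estimate by a standard one-step local-error-plus-stability argument, but carried out on the transformed variable $w=e^{-t\mathcal{L}^h_\kappa}u$ so that we can invoke the assumed $p$th-order accuracy of the underlying RK scheme \eqref{rk_w} directly. First I would set $w(t)=e^{-t\mathcal{L}^h_\kappa}u(t)$, which solves $w_t=G(t,w)$ with $G(t,w)=e^{-t\mathcal{L}^h_\kappa}\mathcal{N}[e^{t\mathcal{L}^h_\kappa}w]$; since $u$ is smooth on $[0,T]$ and $\mathcal{L}^h_\kappa$ is a fixed matrix, $w$ is smooth as well, and the local truncation error of applying \eqref{rk_w} to $w_t=G(t,w)$ over one step of size $\tau$ is $O(\tau^{p+1})$. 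The constant in this local error involves derivatives of $G$, which in turn involve the matrix exponentials $e^{\pm c\tau\mathcal{L}^h_\kappa}$ (bounded by $e^{c\kappa\tau}\le e^{\kappa\tau}$ on a single step, hence $O(1)$) and derivatives of $N_0$ up to order $p$ evaluated along the solution; here the hypothesis that $f_0$ is $p$-times continuously differentiable on $[-\gamma,\gamma]$, together with the MBP bound $\|u(t)\|\le\gamma$ from Theorem \ref{theorem_MBP} (applied to the space-discrete system), guarantees these are uniformly bounded.

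Next I would establish the one-step stability (Lipschitz) estimate for the sIFRK map in the $u$-variable. Writing out \eqref{ifrk_general} for two initial data $u^n$ and $\tilde u^n$ both bounded by $\gamma$, subtracting, and using $\|e^{(c_i-c_j)\tau\mathcal{L}^h_\kappa}\|\le e^{-(c_i-c_j)\kappa\tau}\le 1$ together with the Lipschitz bound $|N_0(\xi_1)-N_0(\xi_2)|\le 2\kappa|\xi_1-\xi_2|$ from Lemma \ref{L2.1}(ii), I get, stage by stage,
\begin{equation*}
\|u^{(i)}-\tilde u^{(i)}\|\le \|u^n-\tilde u^n\|+2\kappa\tau\sum_{j=0}^{i-1}a_{ij}\|u^{(j)}-\tilde u^{(j)}\|.
\end{equation*}
A discrete induction on $i$ (using $c_i\le 1$ so that $\sum_j a_{ij}=c_i\le 1$) yields $\|u^{(i)}-\tilde u^{(i)}\|\le (1+2\kappa\tau)^i\|u^n-\tilde u^n\|$, and in particular the one-step map satisfies $\|u^{n+1}-\tilde u^{n+1}\|\le(1+2\kappa\tau)^s\|u^n-\tilde u^n\|\le e^{2\kappa s\tau}\|u^n-\tilde u^n\|$. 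It is crucial here that Theorem \ref{thm_mbp} guarantees all the intermediate stages $u^{(j)}$, $\tilde u^{(j)}$ stay within $[-\gamma,\gamma]$, so Lemma \ref{L2.1}(ii) is applicable; this is exactly where the conditions of Theorem \ref{thm_mbp} enter the convergence proof.

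Then I would close the argument by the usual Lady Windermere's fan / telescoping of errors. Let $e^n=u(t_n)-u^n$. Comparing the exact flow over one step with the numerical step started from the exact value $u(t_n)$, the one-step error is bounded by $C_0\tau^{p+1}$ (the local truncation error above, transported back from $w$ to $u$ by the $O(1)$ factor $e^{t\mathcal{L}^h_\kappa}$ on a single step); then applying the stability estimate to propagate accumulated error gives $\|e^{n+1}\|\le e^{2\kappa s\tau}\|e^n\|+C_0\tau^{p+1}$. With $e^0=0$, summing the geometric recursion yields
\begin{equation*}
\|e^n\|\le C_0\tau^{p+1}\sum_{k=0}^{n-1}e^{2\kappa sk\tau}=C_0\tau^{p+1}\,\frac{e^{2\kappa sn\tau}-1}{e^{2\kappa s\tau}-1}\le C_0\tau^{p+1}\,\frac{e^{2\kappa st_n}-1}{2\kappa s\tau}=C(e^{2\kappa st_n}-1)\tau^p,
\end{equation*}
using $e^{2\kappa s\tau}-1\ge 2\kappa s\tau$ and $n\tau=t_n$, with $C=C_0/(2\kappa s)$ independent of $\tau$. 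This gives the claimed bound for every $\tau>0$, with no step-size restriction, precisely because the MBP (Theorem \ref{thm_mbp}) keeps all iterates in the region where $f_0$ and its derivatives are controlled.

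The main obstacle, and the step deserving the most care, is the first one: making rigorous that the local truncation error of the sIFRK step is genuinely $O(\tau^{p+1})$ with a constant independent of $\tau$. One must verify that the transformation between the $w$-scheme \eqref{rk_w} and the $u$-scheme \eqref{ifrk_general} does not degrade the order — i.e., that the exact one-step propagator in the $w$-variable times $e^{t_{n+1}\mathcal{L}^h_\kappa}$ reproduces the exact solution $u(t_{n+1})$ exactly, so that the only error is the RK truncation error — and that the derivatives of $G(t,w)=e^{-t\mathcal{L}^h_\kappa}\mathcal{N}[e^{t\mathcal{L}^h_\kappa}w]$ up to order $p$ are bounded uniformly on the relevant time interval and solution range. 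The $t$-derivatives of $G$ bring down powers of $\mathcal{L}^h_\kappa$ and factors $e^{\pm c\tau\mathcal{L}^h_\kappa}$; on a single step these factors are bounded by $e^{\kappa\tau}$, hence by a constant as $\tau\to0$, and the powers of $\mathcal{L}^h_\kappa$ are fixed (the spatial discretization is fixed), so the constant $C_0$ depends on $\mathcal{L}^h_\kappa$, $\kappa$, $\gamma$, $T$, and $\max_{|\xi|\le\gamma}|f_0^{(k)}(\xi)|$ for $k\le p$, but not on $\tau$ — which is all that is required.
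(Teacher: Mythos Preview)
Your argument is correct and follows the same local-error-plus-stability-plus-Gronwall structure as the paper's proof: your stage-by-stage Lipschitz estimate producing the amplification factor $(1+2\kappa\tau)^i$, the use of Theorem~\ref{thm_mbp} to keep all intermediate stages inside $[-\gamma,\gamma]$ so that Lemma~\ref{L2.1}(ii) applies, and the final telescoping all match the paper almost line for line (the paper phrases the ghost iterates as ``reference functions'' $U_i$ and runs the same induction).

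The one genuine difference is how the local truncation error is justified. The paper stays entirely in the $u$-variable and asserts, citing \cite{Du2}, that the defect $R_s$ left when the exact solution is inserted into the recurrence \eqref{ifrk_general} satisfies $\|R_s\|\le C_s\tau^{p+1}$ with $C_s$ depending only on $\|u\|_{C^p[0,T]}$, $\|f_0\|_{C^p[-\gamma,\gamma]}$, $p$, and $\kappa$. Your route via the transformed variable $w$ is natural, but it introduces the backward exponentials $e^{-c\tau\mathcal{L}^h_\kappa}$, and your claimed bound $\|e^{-c\tau\mathcal{L}^h_\kappa}\|\le e^{c\kappa\tau}$ is not correct: the contraction property \eqref{lem_L} holds only for the forward semigroup, and for the discrete Laplacian $\|e^{-c\tau\mathcal{L}^h}\|$ grows like $e^{c\tau\,O(h^{-2})}$. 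The repair is easy---you already allow $C_0$ to depend on $\mathcal{L}^h_\kappa$, and the crude estimate $\|e^{-s\mathcal{L}^h_\kappa}\|\le e^{s\|\mathcal{L}^h_\kappa\|}\le e^{T\|\mathcal{L}^h_\kappa\|}$ for $s\le\tau\le T$ suffices---but the paper's direct $u$-variable approach avoids the issue altogether, since the sIFRK formula \eqref{ifrk_general} involves only forward exponentials $e^{(c_i-c_j)\tau\mathcal{L}^h_\kappa}$ with $c_i\ge c_j$.
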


\begin{proof}
Following \cite{Du2},
we introduce the reference functions $U_i$ for $0\le i\le s$, with $U_0=u(t_n)$ and $U_s=u(t_{n+1})$,
determined by
\begin{align}
\label{refer_sol}
U_i & = e^{c_i\tau\mathcal{L}^h_\kappa} u(t_n)
+ \tau \sum\limits_{j=0}^{i-1} a_{ij} e^{(c_i-c_j)\tau\mathcal{L}^h_\kappa} \mathcal{N}[U_j]+R_i, \quad 1\le i\le s,
\end{align}
where $R_i$ is the truncation error satisfying
\begin{align*}
R_0=R_1= \cdots =R_{s-1} = 0, \quad\|R_s\|\leq C_s\tau^{p+1},
\end{align*}
where the constant $C_s>0$ depends on the $C^{p}[0,T]$-norm of $u$, the $C^p[-\gamma,\gamma]$-norm of $f_0$,
$p$, and $\kappa$, but is independent of $\tau$.

Define $e^n=u(t_n)-u^n$ and $e_i=U_i-u^{(i)}$ for $0\le i\le s$, then $e_0=e^n$ and $e_s=e^{n+1}$.
Subtracting (\ref{ifrk_general2}) from (\ref{refer_sol}) yields
\[
e_i = e^{c_i\tau\mathcal{L}^h_\kappa} e^n
+ \tau \sum\limits_{j=0}^{i-1} a_{ij} e^{(c_i-c_j)\tau\mathcal{L}^h_\kappa}(\mathcal{N}[U_j]-\mathcal{N}[u^{(j)}])+R_i, \quad 1\le i\le s.
\]
Since $\|u^{(j)}\|\leq\gamma$ {by Remark \ref{rmk_intstage}},
using Lemma \ref{L2.1}, we can obtain
\[
\|\mathcal{N}[U_j]-\mathcal{N}[u^{(j)}]\| \leq 2\kappa\|U_j-u^{(j)}\|=2\kappa \|e_j\|.
\]
Then for $1\leq i\leq s-1$, we derive
\begin{align*}
\|e_i\| & \leq \|e^{c_i\tau\mathcal{L}^h_\kappa}\|\|e^n
\|+ \tau \sum\limits_{j=0}^{i-1} a_{ij} \|e^{(c_i-c_j)\tau\mathcal{L}^h_\kappa}\|\|\mathcal{N}[U_j]-\mathcal{N}[u^{(j)}]\|\\
& \leq \|e^n\|+2\kappa\tau\sum\limits_{j=0}^{i-1} \|e_j\|, 
\end{align*}
where we have used \eqref{lem_L} and $e^{-\kappa\tau}\leq1$ for any $\tau\geq0$ and $\kappa>0$.
By induction, we can obtain
\[
\|e_i\| \leq (1+2\kappa\tau)^i\|e^n\|, \quad 1\le i\le s-1.
\]
Thus, for $i=s$ we immediately get
\begin{align*}
\|e^{n+1}\| & \le \|e^{\tau\mathcal{L}^h_\kappa} \|\|e^n\|
+ \tau \sum\limits_{j=0}^{s-1} a_{sj} \|e^{(1-c_j)\tau\mathcal{L}^h_\kappa}\|\|\mathcal{N}[U_j]-\mathcal{N}[u^{(j)}]\|+\|R_s\|\\
& \leq \|e^n\| +2\kappa\tau\sum\limits_{j=0}^{s-1} \|e_j\|+C_s\tau^{p+1}\\
& \leq (1+2\kappa\tau)^s\|e^n\|+C_s\tau^{p+1}.
\end{align*}
By induction, we have
\begin{align*}
\|e^{n}\| & \leq(1+2\kappa\tau)^{sn}\|e^0\|+C_s\tau^{p+1}\sum\limits_{i=0}^{n-1}(1+2\kappa\tau)^{si}\\
& \leq (1+2\kappa\tau)^{sn}\|e^0\|+\frac{C_s}{2\kappa s}(e^{2\kappa sn\tau}-1)\tau^{p}.
\end{align*}
By letting $C=\frac{C_s}{2\kappa s}$, we obtain the desired result since $e^0=0$ and $n\tau=t_n$.
\end{proof}

As an application of the convergence result, we next investigate the energy stability.
The semilinear parabolic equation (\ref{eq1.1}) as a
phase-field model can be regarded as the gradient flow driven by the energy
\begin{equation*}
E[u] = -\frac{1}{2} (u,\mathcal{L}u) + (F(u),1),
\end{equation*}
where $F'(u)=-f_0(u)$ and $(\cdot,\cdot)$ denotes the usual $L^2$ inner product in $\Omega$.
The solution to the phase-field model decreases the energy in time until a steady state is reached.
Although we could not prove the energy decay property for numerical solution of \eqref{eq1.1} produced by the sIFRK method,
we can obtain a uniform bound of the energy at any time step.
More precisely, for small enough {time-step} size $\tau$, it holds that
\[
E[u^n] \le E[u_0] + C_0,
\]
where the constant $C_0>0$ is independent of $\tau$.
The proof can be done based on the convergence result from Theorem \ref{errthm}
and  the same process used  in \cite{Du3}, so we omit the details.

\subsection{Strong stability-preserving sIFRK schemes}\label{sec3.3}

As done in \cite{ShOs88}, with  some given $\{\alpha_{ij}\}$ for $1\le i \le s$ and $0\le j\le i-1$  such that
\begin{equation}
\label{ssp_require0}
\alpha_{ij}\geq 0, \quad\sum_{j=0}^{i-1}\alpha_{ij}=1,\ 1\leq i\leq s,
\end{equation}
one can transform the sIFRK method (\ref{ifrk_general}) with \eqref{coef_aij} and \eqref{coef_ci}
in the Butcher form into the following {Shu-Osher} form:
\begin{subequations}
\label{ifrk_ssp}
\begin{align}
u^{(0)} & = u^n, \\
u^{(i)} & = \sum_{j=0}^{i-1} e^{(c_i-c_j)\tau\mathcal{L}^h_\kappa} \big(\alpha_{ij}u^{(j)}+\tau\beta_{ij}\mathcal{N}[u^{(j)}]\big),
\quad  {\rm for }\;\;1\le i\le s, \label{ifrk_ssp2} \\
u^{n+1} & = u^{(s)},
\end{align}
\end{subequations}
where $\beta_{ij}=a_{ij}-\sum_{k=j+1}^{i-1}  \alpha_{ik}a_{kj}$.
Such  formula with $\kappa=0$ (i.e., $\mathcal{L}^h_{\kappa}= \mathcal{L}^h$) has been used in \cite{Isherwood}
to investigate the strong stability-preserving (SSP)  property of numerical schemes for the equation  \eqref{eq1.1},
and later the MBP-preserving property in \cite{Ju4}.

If  all coefficients in  \eqref{ifrk_ssp2} additionally satisfy that
\begin{equation}
\label{ssp_require}
\beta_{ij}\ge 0, \text{ and }
\beta_{ij}=0 \text{ if the corresponding } \alpha_{ij}=0, \quad 0\le j<i\le s,
\end{equation}
then the right-hand side of \eqref{ifrk_ssp2} is clearly a convex combination of a class of integrating factor Euler substeps:
\[
u^{(j)} \mapsto e^{(c_i-c_j)\tau\mathcal{L}^h_\kappa} \Big(u^{(j)}+\tau\frac{\beta_{ij}}{\alpha_{ij}}\mathcal{N}[u^{(j)}]\Big),
\quad 0\leq j\leq i-1.
\]
Thus, we call the time-stepping formula \eqref{ifrk_ssp} with constraints \eqref{ssp_require0} and \eqref{ssp_require} the SSP-sIFRK method, which is the correspondingly
stabilized version  of the SSP-IFRK (i.e., with $\kappa=0$) developed in \cite{Isherwood}.
Obviously, a sIFRK method may not be a SSP-sIFRK method
due to the extra requirement \eqref{ssp_require}.
It is also  worth noting that  the MBP  only holds conditionally for the SSP-IFRK method \cite{Ju4}
in the sense that some restriction on the {time-step} size is needed.

The following result on the sufficient condition for the SSP-sIFRK method to be unconditionally MBP-preserving
can be derived directly based on the Shu-Osher form \eqref{ifrk_ssp}.

\begin{theorem}
\label{thm_mbp_ssp}
Suppose  that the coefficients $\{\alpha_{ij}\}$, $\{\beta_{ij}\}$ satisfy \eqref{ssp_require0} and \eqref{ssp_require} respectively,  \textcolor{black}{and} $\{c_i\}$ satisfies the condition (i) of Theorem \ref{thm_mbp}. In addition, suppose that
for $0\le j<i\le s$,
\begin{equation}
\label{thm_mbp_ssp_cond}
\frac{\beta_{ij}}{\alpha_{ij}}\leq c_i-c_j,
\end{equation}
if  $\alpha_{ij}\not=0$. Then, if $\|u^{n}\|\leq\gamma$,
the solution $u^{n+1}$ obtained from \eqref{ifrk_ssp} always satisfies $\|u^{n+1}\|\leq\gamma$ for any $\tau>0$.
\end{theorem}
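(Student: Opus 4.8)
The plan is to mimic the proof of Theorem~\ref{thm_mbp}, but using the Shu--Osher form \eqref{ifrk_ssp2} directly instead of the Butcher form, so that the estimate at each stage is a convex combination of integrating-factor Euler substeps. First I would argue by induction on the stage index $i$, with the inductive hypothesis $\|u^{(j)}\|\leq\gamma$ for all $j\leq i-1$; the base case $\|u^{(0)}\|=\|u^n\|\leq\gamma$ is given. For the inductive step, I would estimate $u^{(i)}$ from \eqref{ifrk_ssp2} by the triangle inequality and the contraction bound \eqref{lem_L}, obtaining
\begin{equation*}
\|u^{(i)}\| \le \sum_{j=0}^{i-1} e^{-(c_i-c_j)\kappa\tau}\,\bigl\|\alpha_{ij}u^{(j)}+\tau\beta_{ij}\mathcal{N}[u^{(j)}]\bigr\|,
\end{equation*}
where the condition~(i) on $\{c_i\}$ guarantees $c_i-c_j\ge 0$ so that \eqref{lem_L} applies.

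Next, for each $j$ with $\alpha_{ij}\neq 0$ I would write
\begin{equation*}
\alpha_{ij}u^{(j)}+\tau\beta_{ij}\mathcal{N}[u^{(j)}]
=\alpha_{ij}\Bigl(u^{(j)}+\tau\tfrac{\beta_{ij}}{\alpha_{ij}}\mathcal{N}[u^{(j)}]\Bigr),
\end{equation*}
and then bound the inner integrating-factor Euler substep. The key point is that $N_0(\xi)=\kappa\xi+f_0(\xi)$ has $N_0'(\xi)=\kappa+f_0'(\xi)\in[0,2\kappa]$ on $[-\gamma,\gamma]$ by \eqref{assump_kp}, so that the scalar map $\xi\mapsto\xi+\eta\,N_0(\xi)$ is nondecreasing whenever $0\le\eta\le 1/(2\kappa)$... actually I would instead use Lemma~\ref{L2.1}(i): since $\|u^{(j)}\|\le\gamma$, we have $\|\mathcal{N}[u^{(j)}]\|\le\kappa\gamma$, hence
\begin{equation*}
\Bigl\|u^{(j)}+\tau\tfrac{\beta_{ij}}{\alpha_{ij}}\mathcal{N}[u^{(j)}]\Bigr\|
\le \gamma + \tau\tfrac{\beta_{ij}}{\alpha_{ij}}\kappa\gamma
= \Bigl(1+\kappa\tau\tfrac{\beta_{ij}}{\alpha_{ij}}\Bigr)\gamma.
\end{equation*}
Combining, and using $\sum_j\alpha_{ij}=1$ from \eqref{ssp_require0} together with the convention that terms with $\alpha_{ij}=0$ also have $\beta_{ij}=0$ (so they contribute $0$ and can be dropped), I get
\begin{equation*}
\|u^{(i)}\|\le \sum_{j=0}^{i-1}\alpha_{ij}\,e^{-(c_i-c_j)\kappa\tau}\Bigl(1+\kappa\tau\tfrac{\beta_{ij}}{\alpha_{ij}}\Bigr)\gamma.
\end{equation*}

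Finally I would invoke the structural hypothesis \eqref{thm_mbp_ssp_cond}: since $\tfrac{\beta_{ij}}{\alpha_{ij}}\le c_i-c_j$, each factor satisfies $1+\kappa\tau\tfrac{\beta_{ij}}{\alpha_{ij}}\le 1+\kappa\tau(c_i-c_j)\le e^{\kappa\tau(c_i-c_j)}$, using the elementary inequality $1+x\le e^x$. Hence $e^{-(c_i-c_j)\kappa\tau}\bigl(1+\kappa\tau\tfrac{\beta_{ij}}{\alpha_{ij}}\bigr)\le 1$ for each $j$, and therefore $\|u^{(i)}\|\le\bigl(\sum_j\alpha_{ij}\bigr)\gamma=\gamma$, completing the induction; taking $i=s$ gives $\|u^{n+1}\|\le\gamma$. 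The only mildly delicate point is the bookkeeping of the terms with $\alpha_{ij}=0$: one must use the second clause of \eqref{ssp_require} to ensure $\beta_{ij}=0$ there so that these terms genuinely vanish and the $\tfrac{\beta_{ij}}{\alpha_{ij}}$ ratios appearing in \eqref{thm_mbp_ssp_cond} are only ever evaluated when $\alpha_{ij}\neq0$. Everything else is a routine chain of triangle-inequality, contraction, and convexity estimates, so I do not expect a substantive obstacle; the proof is essentially a one-screen computation once the Shu--Osher decomposition is in place.
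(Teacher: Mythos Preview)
Your proposal is correct and follows essentially the same approach as the paper's proof: induction on the stage index, the contraction bound \eqref{lem_L}, Lemma~\ref{L2.1}(i) to bound $\|\mathcal{N}[u^{(j)}]\|\le\kappa\gamma$, and the convexity $\sum_j\alpha_{ij}=1$. The only cosmetic difference is in the final elementary inequality: the paper bounds $e^{-(c_i-c_j)\kappa\tau}\le\frac{1}{1+(c_i-c_j)\kappa\tau}$ and then uses \eqref{thm_mbp_ssp_cond}, whereas you first use \eqref{thm_mbp_ssp_cond} and then bound $1+\kappa\tau(c_i-c_j)\le e^{\kappa\tau(c_i-c_j)}$; these are the same inequality $1+x\le e^x$ applied in opposite directions, so the two arguments are interchangeable.
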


\begin{proof}
Assume $\|u^{(j)}\|\le\gamma$ for $j\le i-1$.
Using Lemma \ref{L2.1} and \eqref{lem_L},
we obtain from (\ref{ifrk_ssp2}) that
\begin{align}
\|u^{(i)}\|
& \leq \sum_{j=0}^{i-1} \alpha_{ij} \|e^{(c_i-c_j)\tau\mathcal{L}^h_\kappa}\| \Big\|u^{(j)}+\tau\frac{\beta_{ij}}{\alpha_{ij}}\mathcal{N}[u^{(j)}]\Big\| \nonumber \\
& \leq \sum_{j=0}^{i-1} \alpha_{ij} e^{-(c_i-c_j)\kappa\tau}
\Big(\gamma+\tau\frac{\beta_{ij}}{\alpha_{ij}}\cdot\kappa\gamma\Big) \nonumber \\
& \leq \gamma \sum_{j=0}^{i-1}\frac{\alpha_{ij} }{1+(c_i-c_j)\kappa\tau} \Big(1+\frac{\beta_{ij}}{\alpha_{ij}}\kappa\tau\Big), \label{eqssp}
\end{align}
where we used the fact $e^{-a}\leq\frac{1}{1+a}$ for any $a\geq0$ in the last inequality.
Then, the combination of \eqref{thm_mbp_ssp_cond} and \eqref{eqssp} gives us
\[
\|u^{(i)}\|\le\gamma \sum_{j=0}^{i-1}\alpha_{ij}=\gamma.
\]
By induction, we finally obtain $\|u^{n+1}\|=\|u^{(s)}\|\le\gamma$, which completes the proof.
\end{proof}

\begin{remark}
The sufficient conditions for unconditional MBP preservation of the SSP-sIFRK method given in Theorem \ref{thm_mbp_ssp}
are  easier to check than the ones stated in Theorem \ref{thm_mbp} for testing the case of sIFRK method,
but they  are  not equivalent.
Note that the SSP-sIFRK schemes  \eqref{ifrk_ssp} are mostly obtained by basing the IFRK method
on the optimal canonical Shu-Osher form with non-decreasing abscissas. On the other hand, one also could follow the similar idea as  in \cite{Isherwood} to establish a system of equations and inequalities
with respect to the coefficients $\alpha_{ij}$, $\beta_{ij}$, and $c_i$
based on the conditions \eqref{coef_ci}, \eqref{ssp_require0}, \eqref{ssp_require} and \eqref{thm_mbp_ssp_cond},
and then construct unconditionally MBP-preserving SSP-sIFRK schemes  by solving the optimization problem for the coefficients.
\end{remark}
{\begin{remark}
As an analogue to SSP schemes,
the total variation bounded (TVB) schemes  \cite{FeSp05,HuSp11}
also could  preserve the MBP under certain constraints on the time-step size.  Several conditionally MBP-preserving IFRK schemes found in our recent work \cite{Ju4} are based on the combination of the TVB property and  the IFRK method.
In order to remove these constraints,  one potential way is still to add a linear stabilization term in these schemes as done in this paper.
\end{remark}

\section{Examples of unconditionally MBP-preserving sIFRK method}\label{sec4}

In this section, we present some examples of the sIFRK method which are unconditionally MBP-preserving
by checking the sufficient condition stated in Theorem \ref{thm_mbp} or Theorem \ref{thm_mbp_ssp}.
In addition, we also show that the SSP-sIFRK schemes,
that is, the SSP-IFRK schemes developed in \cite{Isherwood,Ju4} with the proposed stabilization,
fail to hold these conditions except the first-order one.
For simplicity of notations, we denote by sIFRK($s$,$p$) the $s$-stage, $p$th-order sIFRK method
and by the vector $c=[c_0,c_1,c_2,\dots,c_s]^T$ the abscissas.

\subsection{First-order sIFRK scheme}\label{sec4.1}

The sIFRK(1,1) scheme is given by
\[
u^{n+1} = e^{\tau\mathcal{L}^h_\kappa} (u^n + \tau  \mathcal{N}[u^n]).
\]
Here, $c=[0,1]^T$ and  $g_1(x)=e^{-x}+x e^{-x}$ satisfies the conditions (i) and  (ii) in Theorem \ref{thm_mbp},
thus sIFRK(1,1) is unconditionally MBP-preserving.
Note that sIFRK(1,1) is also an SSP-sIFRK method at the same time
since both  \eqref{ssp_require0} and  \eqref{ssp_require} hold.
Moreover, one can find that $g_1(x)<1$ for $x>0$ and $|g_1(x)-1|=\mathcal{O}(x^2)$.

\subsection{Second-order sIFRK schemes}

The family of sIFRK($s$,2) schemes \cite{Suli03} with $c=[0,\frac1s,\frac2s,\dots,1]^T$
(thus the condition (i) in Theorem \ref{thm_mbp} holds) are defined by: $u^{(0)}  = u^n$,
\begin{subequations}
\label{ifrk-s-2}
\begin{align}
u^{(i)}& = e^{\frac{\tau}{s}\mathcal{L}^h_\kappa} \Big(u^{(i-1)}+\frac{\tau}{s}\mathcal{N}[u^{(i-1)}]\Big)\\
& = e^{\frac{i\tau}{s}\mathcal{L}^h_\kappa}u^{n}
+\frac{\tau}{s} \sum_{j=0}^{i-1} e^{\frac{(i-j)\tau}{s}\mathcal{L}^h_\kappa}\mathcal{N}[u^{(j)}],
\quad 1\le i\le s-1, \nonumber \\
u^{n+1} & = e^{\tau\mathcal{L}^h_\kappa} u^n+ \frac{\tau}{s-1} \sum_{j=1}^{s-1} e^{\frac{(s-j)\tau}{s}\mathcal{L}^h_\kappa}\mathcal{N}[u^{(j)}]. \label{ifrk-s-2b}
\end{align}
\end{subequations}
Here, we have
\begin{align*}
g_i(x)& =e^{-\frac{i}{s}x} + \frac{x}{s} \sum_{j=0}^{i-1} e^{-\frac{i-j}{s}x},  \quad  1\le i\le s-1,\\
g_s(x) &= e^{-x} + \frac{x}{s-1} \sum_{j=1}^{s-1} e^{-\frac{s-j}{s}x},
\end{align*}
which can be shown to be nonincreasing on $[0,\infty)$ by checking their derivatives
(i.e., the conditions (ii) in Theorem \ref{thm_mbp} holds). Thus,
all sIFRK($s$,2)  schemes defined by  \eqref{ifrk-s-2} are unconditionally MBP-preserving.

For convenience of use, we list some of sIFRK($s$,2) methods as follows:
\begin{itemize}
\item sIFRK(2,2) with $c=[0, \frac12,1]^T$:
\begin{align*}
u^{(1)} & =e^{\frac{\tau}{2}\mathcal{L}^h_\kappa} \Big(u^n+\frac{\tau}{2}\mathcal{N}[u^n]\Big), \\
u^{n+1} & =e^{\tau\mathcal{L}^h_\kappa}u^n+\tau e^{\frac{\tau}{2}\mathcal{L}^h_\kappa}\mathcal{N}[u^{(1)}].
\end{align*}

\item sIFRK(3,2) with $c=[0, \frac13,\frac23,1]^T$:
\begin{align*}
u^{(1)} & =e^{\frac{\tau}{3}\mathcal{L}^h_\kappa} \Big(u^n+\frac{\tau}{3}\mathcal{N}[u^n]\Big), \\
u^{(2)} & =e^{\frac{\tau}{3}\mathcal{L}^h_\kappa} \Big(u^{(1)}+\frac{\tau}{3}\mathcal{N}[u^{(1)}]\Big), \\
u^{n+1} & =e^{\tau\mathcal{L}^h_\kappa}u^n + \frac{\tau}{2} e^{\frac{2\tau}{3}\mathcal{L}^h_\kappa}\mathcal{N}[u^{(1)}]
+ \frac{\tau}{2} e^{\frac{\tau}{3}\mathcal{L}^h_\kappa} \mathcal{N}[u^{(2)}].
\end{align*}

\item sIFRK(4,2) with $c=[0, \frac14,\frac24,\frac34,1]^T$:
\begin{align*}
u^{(1)} & =e^{\frac{\tau}{4}\mathcal{L}^h_\kappa} \Big(u^n+\frac{\tau}{4}\mathcal{N}[u^n]\Big), \\
u^{(2)} & =e^{\frac{\tau}{4}\mathcal{L}^h_\kappa} \Big(u^{(1)}+\frac{\tau}{4}\mathcal{N}[u^{(1)}]\Big), \\
u^{(3)} & =e^{\frac{\tau}{4}\mathcal{L}^h_\kappa} \Big(u^{(2)}+\frac{\tau}{4}\mathcal{N}[u^{(2)}]\Big), \\
u^{n+1} & =e^{\tau\mathcal{L}^h_\kappa}u^n + \frac{\tau}{3} e^{\frac{3\tau}{4}\mathcal{L}^h_\kappa}\mathcal{N}[u^{(1)}]
+ \frac{\tau}{3} e^{\frac{\tau}{2}\mathcal{L}^h_\kappa} \mathcal{N}[u^{(2)}]
+ \frac{\tau}{3} e^{\frac{\tau}{4}\mathcal{L}^h_\kappa} \mathcal{N}[u^{(3)}].
\end{align*}
\end{itemize}

As pointed out in \cite{Isherwood},
more stages in the methods lead to more accurate numerical results.
We will verify it in the next section by using the second-order methods presented above.

\begin{remark}
\label{rem22}
We note that all of the SSP-sIFRK($s$,2) schemes proposed in \cite{Isherwood} do not satisfy the conditions in
Theorems \ref{thm_mbp} and \ref{thm_mbp_ssp}.
The SSP-sIFRK($s$,2) schemes take the following unified form:  $u^{(0)}=u^n$,
\begin{subequations}
\label{ssps2}
\begin{align}
u^{(i)} & =e^{\frac{\tau}{s-1}\mathcal{L}^h_\kappa}\Big(u^{(i-1)}+\frac{\tau}{s-1} \mathcal{N}[u^{(i-1)}]\Big) \\
& = e^{\frac{i\tau}{s-1}\mathcal{L}^h_\kappa}u^{n}
+\frac{\tau}{s-1} \sum_{j=0}^{i-1} e^{\frac{(i-j)\tau}{{s-1}}\mathcal{L}^h_\kappa}\mathcal{N}[u^{(j)}],  \quad  1\le i\le s-1, \nonumber\\
u^{n+1} & =\frac{1}{s}e^{\tau\mathcal{L}^h_\kappa}u^n+\frac{s-1}{s}\Big(u^{(s-1)}+\frac{\tau}{s-1}\mathcal{N}[u^{(s-1)}]\Big) \\
& = e^{\tau\mathcal{L}^h_\kappa}u^n
+\frac{\tau}{s} \sum_{j=0}^{s-2} e^{\frac{(s-1-j)\tau}{{s-1}}\mathcal{L}^h_\kappa}\mathcal{N}[u^{(j)}]
+\frac{\tau}{s}\mathcal{N}[u^{(s-1)}].  \nonumber
\end{align}
\end{subequations}
Here, $c=[0,\frac{1}{s-1}, \frac{2}{s-1}, \dots ,\frac{s-2}{s-1}, 1,1]^T$.
One can easily see that
\[
\frac{\beta_{s,s-1}}{\alpha_{s,s-1}}=\frac{1}{s-1}>0=c_s-c_{s-1},
\]
which violates \eqref{thm_mbp_ssp_cond} in Theorem \ref{thm_mbp_ssp}.
Moreover, it also does not satisfy the condition (ii) in Theorem \ref{thm_mbp} with
\[
g_s(x)=e^{-x}
+\frac{x}{s} \sum_{j=0}^{s-2} e^{-\frac{s-1-j}{{s-1}}x} + \frac{x}{s}.
\]
\end{remark}

\subsection{Third-order sIFRK schemes}

Unlike the second-order schemes, we do not have a general form for third-order or higher-order schemes.
Below, we present \textcolor{black}{one}  third-order sIFRK \textcolor{black}{scheme}, which \textcolor{black}{satisfies} the conditions (i) and (ii) in Theorem \ref{thm_mbp}
(we simply omit the details of verification), and thus \textcolor{black}{is} unconditionally MBP-preserving.

The \textcolor{black}{scheme} is given by Heun-sIFRK(3,3) with $c=[0,\frac13,\frac23,1]^T$ \cite{Suli03}:
\begin{subequations}
\begin{align*}
u^{(1)} & =e^{\frac{\tau}{3}\mathcal{L}^h_\kappa}u^n + \frac{\tau}{3} e^{\frac{\tau}{3}\mathcal{L}^h_\kappa} \mathcal{N}[u^n],\\
u^{(2)} & =e^{\frac{2\tau}{3}\mathcal{L}^h_\kappa}u^n + \frac{2\tau}{3} e^{\frac{\tau}{3}\mathcal{L}^h_\kappa}\mathcal{N}[u^{(1)}],\\
u^{n+1} & =e^{\tau\mathcal{L}^h_\kappa}u^n + \frac{\tau}{4} e^{\tau\mathcal{L}^h_\kappa}\mathcal{N}[u^n]
+ \frac{3\tau}{4} e^{\frac{\tau}{3}\mathcal{L}^h_\kappa}\mathcal{N}[u^{(2)}]\textcolor{black}{.}
\end{align*}
\end{subequations}

\begin{remark}
The SSP-sIFRK(3,3) scheme proposed in \cite{Isherwood} takes the following form:
\begin{subequations}
\begin{align*}
u^{(1)} &= e^{\frac{2\tau}{3} \mathcal{L}^h_\kappa}\Big(u^{n}+\frac{2}{3} \tau \mathcal{N}[u^{n}]\Big), \\
u^{(2)} &= \frac{2}{3} e^{\frac{2\tau}{3} \mathcal{L}^h_\kappa} u^{n}+\frac{1}{3}\Big(u^{(1)}
+\frac{4}{3} \tau \mathcal{N}[u^{(1)}]\Big), \\
u^{n+1} &= \frac{59}{128} e^{\tau \mathcal{L}^h_\kappa} u^{n}
+\frac{15}{128} e^{\tau \mathcal{L}^h_\kappa}\Big(u^{n}+\frac{4}{3} \tau \mathcal{N}[u^{n}]\Big)
 +\frac{27}{64} e^{\frac{\tau}{3} \mathcal{L}^h_\kappa}\Big(u^{(2)}+\frac{4}{3} \tau \mathcal{N}[u^{(2)}]\Big). \nonumber
\end{align*}
\end{subequations}
Here, $c=[0, \frac23,\frac23,1]^T$. One can easily see
\begin{align*}
\frac{\beta_{2,1}}{\alpha_{2,1}}=\frac43>0=c_2-c_1,
\end{align*}
which violates with \eqref{thm_mbp_ssp_cond} in Theorem \ref{thm_mbp_ssp}.
Moreover, it also does not satisfy the condition (ii) in Theorem \ref{thm_mbp} with
\[
g_2(x)=e^{-\frac23 x} + \frac{2x}{9} e^{-\frac23 x} + \frac{4x}{9}.
\]
\end{remark}

\begin{remark}
We have not found so far in the literature any fourth-order or higher-order  sIFRK scheme in the explicit form
 satisfying the conditions in Theorem \ref{thm_mbp}.
Alternatively, one may further consider the fully implicit IFRK approach,  as done in \cite{Isherwood} for the SSP method, to develop MBP-preserving schemes.
\end{remark}

\section{Numerical experiments}\label{sec5}

In this section, we carry out some numerical experiments to demonstrate the performance of
the sIFRK schemes presented in Section \ref{sec4}.
The spatial discretization is performed by the central difference method for all examples
and the matrix exponentials are implemented by using  the fast Fourier transform (FFT) \cite{Ju2,Ju3}.
First, the convergence rates in time and space are verified by testing a benchmark Allen-Cahn traveling wave problem.
Second, we  verify the unconditional  MBP preservation of the schemes by various evolution examples.
In the end, we present a 3D simulation example to show effectiveness of the proposed method.

\subsection{Convergence tests}

It is well-known that the 2D Allen-Cahn equation in the whole space has a traveling wave solution.
Let us take the domain  $\Omega=(-0.5,0.5)^2$ and  consider the equation
\begin{equation}
\label{eq5.1}
u_t=\Delta u+\frac{1}{\epsilon^2}(u-u^3),\qquad t>0,\ (x,y)\in\Omega
\end{equation}
with the initial data
\begin{equation*}
u_0(x,y)=\frac12\left(1-\tanh\Big(\frac{x}{2\sqrt{2}\epsilon}\Big)\right).
\end{equation*}
The periodic boundary condition is imposed to allow for an approximate traveling wave solution
(for $\epsilon\ll 1$) of the form
\begin{eqnarray*}
u(t,x,y)=\frac12\left(1-\tanh\Big(\frac{x-st}{2\sqrt{2}\epsilon}\Big)\right),
\end{eqnarray*}
where $s=\frac{3}{\sqrt{2}\epsilon}$.
We set $\epsilon=0.015$  and the ending time $T=\frac{\sqrt2\epsilon}{4}$.
In this setting, the stabilizing constant is chosen as $\kappa=\frac{2}{\epsilon^2}$.

Setting $h=1/2048$, we then compute the numerical solutions with various {time-step} sizes by the proposed schemes.
The numerical errors of the solutions at $t=T$
and corresponding convergence rates are given in Tables \ref{tab5.1}--\ref{tab5.3},
where the desired temporal convergence rates (1 for sIFRK(1,1), 2 for sIFRK($s$,2), and 3 for sIFRK($s$,3)) are obviously observed.
{As expected, the error constants are smaller for the schemes with more stages for a fixed order of accuracy.}

\begin{table}[!ht]
\begin{center}
\caption{Errors and convergence rates in time of the traveling wave problem \eqref{eq5.1} using  the first-order sIFRK scheme with
$h=1/2048$ ($\delta=T/128$).}
\label{tab5.1}
\begin{tabular}{|c|c|c|c|c|c|}
\hline
&$\tau$ & $L^2$ Error & Rate & $L^\infty$ Error & Rate\\
\hline\hline\multirow{6}{5em}{sIFRK(1,1)}
&$\delta$&8.4586e-01&--&9.9870e-01&--\\
&$\delta/2$&5.6129e-01&0.59&9.7073e-01&0.04\\
&$\delta/4$&3.5269e-01&0.67&8.1648e-01&0.24\\
&$\delta/8$&2.0037e-01& 0.81&5.3899e-01&0.59\\
&$\delta/16$&1.0573e-01&0.92&3.0023e-01&0.84\\
&$\delta/32$&5.3960e-02&0.97&1.5577e-01&0.95\\
\hline
\end{tabular}
\end{center}
\end{table}

\begin{table}[!ht]
\begin{center}
\caption{Errors and convergence rates in time of the traveling wave problem \eqref{eq5.1} using  the second-order sIFRK   schemes with
$h=1/2048$ ($\delta=T/32$).}
\label{tab5.2}
\begin{tabular}{|c|c|c|c|c|c|c|}
\hline
 & $\tau$ & $L^2$ Error & Rate & $L^\infty$ Error & Rate \\
\hline\hline
\multirow{6}{5em}{sIFRK(2,2)}
&$\delta$&8.4566e-01&--& 9.9917e-01&--\\
&$\delta/2$&4.7032e-01&0.84&9.3871e-01&0.09\\
&$\delta/4$&2.0091e-01&1.22&5.5135e-01&0.76\\
&$\delta/8$&6.3380e-02&1.66&1.8665e-01&1.56\\
&$\delta/16$&1.7488e-02&1.85&5.1933e-02&1.84\\
&$\delta/32$&4.5077e-03&1.95&1.3401e-02&1.95\\
\hline
\multirow{6}{5em}{sIFRK(3,2)}
&$\delta$&7.3155e-01&--&9.9722e-01&--\\
&$\delta/2$&4.0349e-01&0.85&8.8763e-01&0.16\\
&$\delta/4$&1.6074e-01&1.32&4.5403e-01&0.96\\
&$\delta/8$&4.8871e-02&1.71&1.4456e-01&1.65\\
&$\delta/16$&1.3275e-02&1.88&3.9471e-02&1.87\\
&$\delta/32$&3.3790e-03&1.97&1.0052e-02&1.97\\
\hline
\multirow{6}{5em}{sIFRK(4,2)}
&$\delta$&6.7288e-01&--&9.9437e-01&--\\
&$\delta/2$&3.6256e-01&0.89&8.4291e-01&0.23\\
&$\delta/4$&1.3836e-01&1.38&3.9619e-01&1.08\\
&$\delta/8$&4.1285e-02&1.74&1.2239e-01&1.69\\
&$\delta/16$&1.1122e-02&1.89&3.3097e-02&1.88\\
&$\delta/32$&2.8059e-03&1.98&8.3543e-03&1.98\\
\hline
\end{tabular}
\end{center}
\end{table}

\begin{table}[!ht]
\begin{center}
\caption{Errors and convergence rates in time of the traveling wave problem \eqref{eq5.1} using the third-order sIFRK \textcolor{black}{scheme} with
$h=1/2048$ ($\delta=T/16$).}
\label{tab5.3}
\begin{tabular}{|c|c|c|c|c|c|}
\hline
& $\tau$ & $L^2$ Error & Rate & $L^\infty$ Error & Rate \\
\hline\hline
\multirow{6}{5em}{Heun-sIFRK(3,3)}
&$\delta$&8.8453e-01&--&9.9956e-01&--\\
&$\delta/2$ &4.5029e-01&0.97&9.2778e-01&0.10\\
&$\delta/4$ &1.4266e-01&1.65&4.0876e-01&1.18\\
&$\delta/8$ &2.7399e-02&2.38&8.1593e-02&2.32\\
&$\delta/16$ &4.1342e-03&2.72&1.2337e-02&2.72\\
&$\delta/32$ &4.6608e-04&3.14&1.3954e-03&3.14\\
\hline
\end{tabular}
\end{center}
\end{table}

Next, we test the convergence  with respect to the spatial mesh size $h$ using the sIFRK(2,2)  scheme with $\tau=T/2048$.
The numerical errors of the solutions at $t=T$ and corresponding convergence rates are presented in Table \ref{tab5.4},
and it is observed that the spatial convergence is of second order,
which is  consistent with the expectation for the central difference method.

\begin{table}[!ht]
\begin{center}
\caption{Errors and convergence rates in space of the traveling wave problem \eqref{eq5.1} using the sIFRK(2,2) scheme with $\tau=T/2048$.}
\label{tab5.4}
\begin{tabular}{|c|c|c|c|c|}
\hline
$h$ & $L^2$ Error & Rate & $L^\infty$ Error & Rate\\
\hline\hline
$1/32$&3.6280e-01&--&8.2219e-01&--\\
$1/64$&1.2410e-01&1.54&3.3584e-01&1.29\\
$1/128$&3.3108e-02&1.90&9.6036e-02&1.80\\
$1/256$&8.3700e-03&1.98&2.4480e-02&1.97\\
$1/512$&2.0878e-03&2.00&6.1317e-03&1.99\\
$1/1024$&5.2727e-04&1.98&1.5341e-03&1.99\\
\hline
\end{tabular}
\end{center}
\end{table}

\subsection{MBP preservation}

Some examples will be tested to demonstrate  the MBP preservation of  the proposed sIFRK schemes.
The first one focuses on the Allen-Cahn equation with the Flory-Huggins potential consisting of a logarithmic term.
The second one takes simulation of  the classic shrinking bubble example for illustration.
The third one is used to numerically show that the SSP-sIFRK(2,2) \cite{Isherwood} could not hold the MBP unconditionally
as discussed in Remark \ref{rem22}. We still take the domain $\Omega=(-0.5,0.5)^2$ for all experiments in this subsection.

We consider the Allen-Cahn equation
\begin{eqnarray}
\label{eq5.4}
u_t=\epsilon^2\Delta u+f(u)
\end{eqnarray}
subject to periodic boundary condition,
where $\epsilon=0.01$ and $f(u)$ is the negative of the derivative of the Flory-Huggins potential, that is,
\begin{eqnarray*}
f(u)=\frac{\theta}{2}\ln\frac{1-u}{1+u}+\theta_cu,
\end{eqnarray*}
where $\theta=0.8$ and $\theta_c=1.6$.
In this setting, the positive root of the equation $f(\rho)=0$ is $\rho\approx0.9575$,
which is the uniform bound of the exact solution in absolute value,
and the stabilizing constant is chosen as $\kappa=8.02$.

\begin{figure}[!ht]
   \centerline{\includegraphics[width=0.49\textwidth]{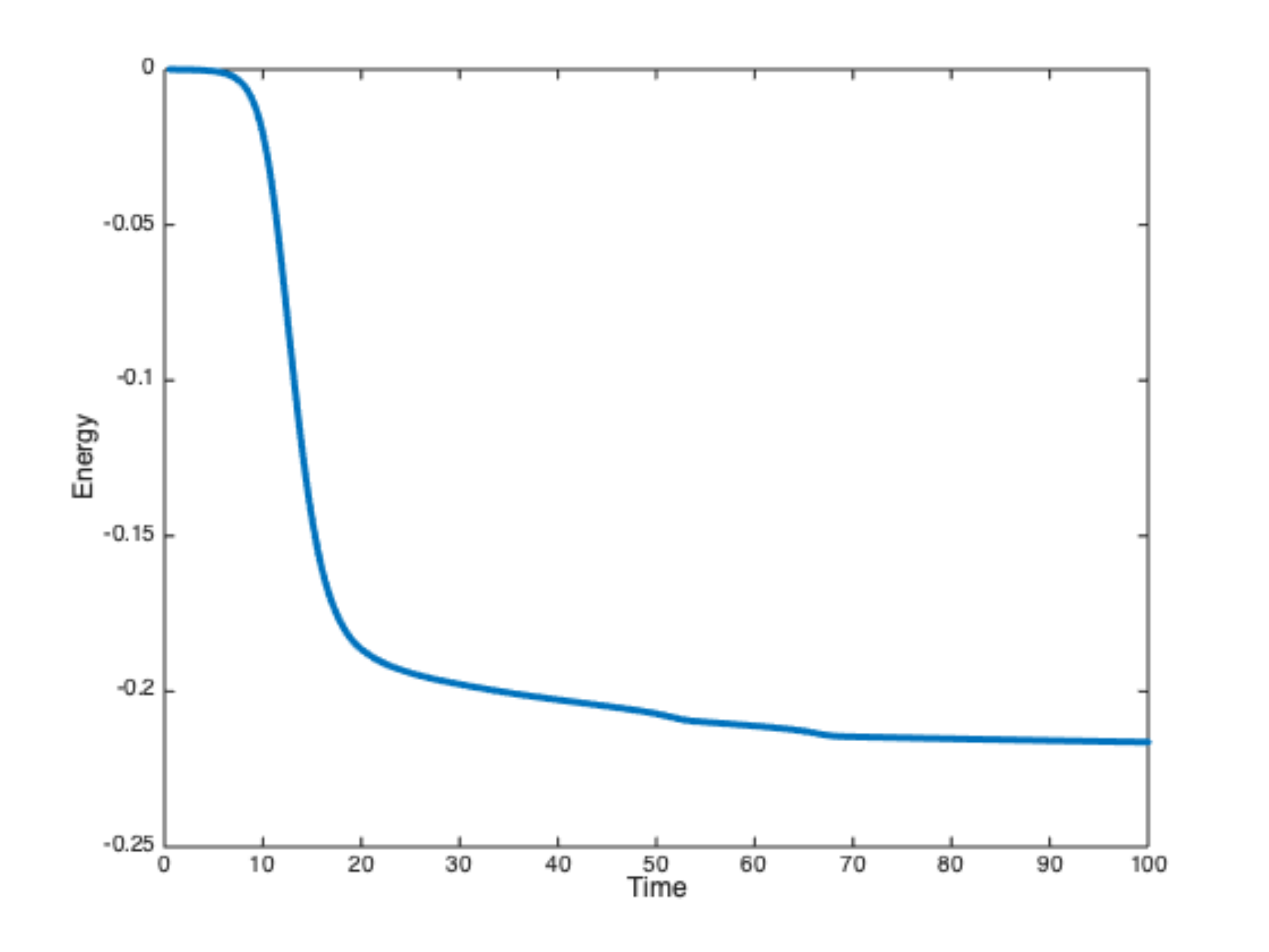}\hspace{-0.5cm}
   \includegraphics[width=0.49\textwidth]{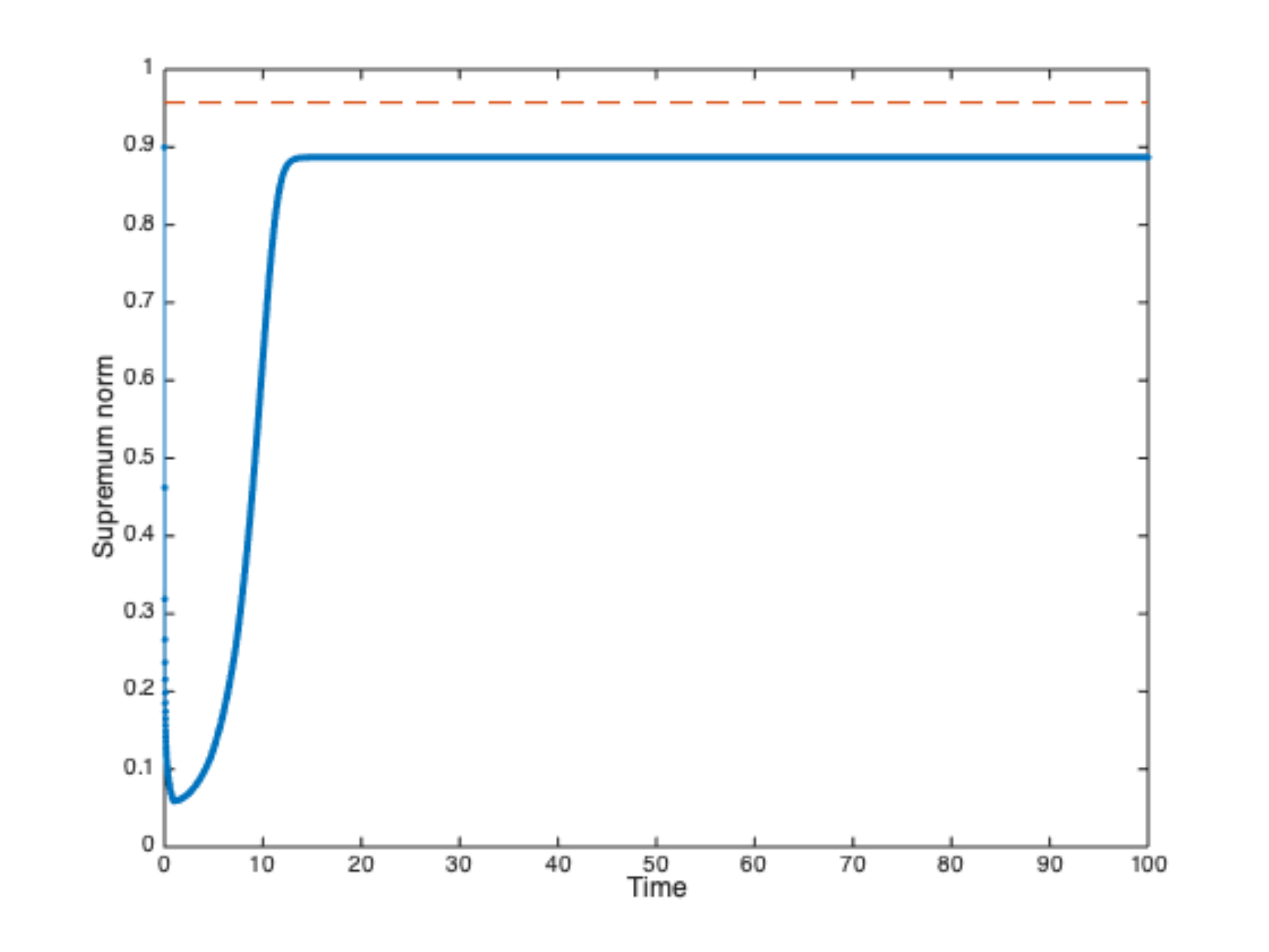}}
   \centerline{\includegraphics[width=0.49\textwidth]{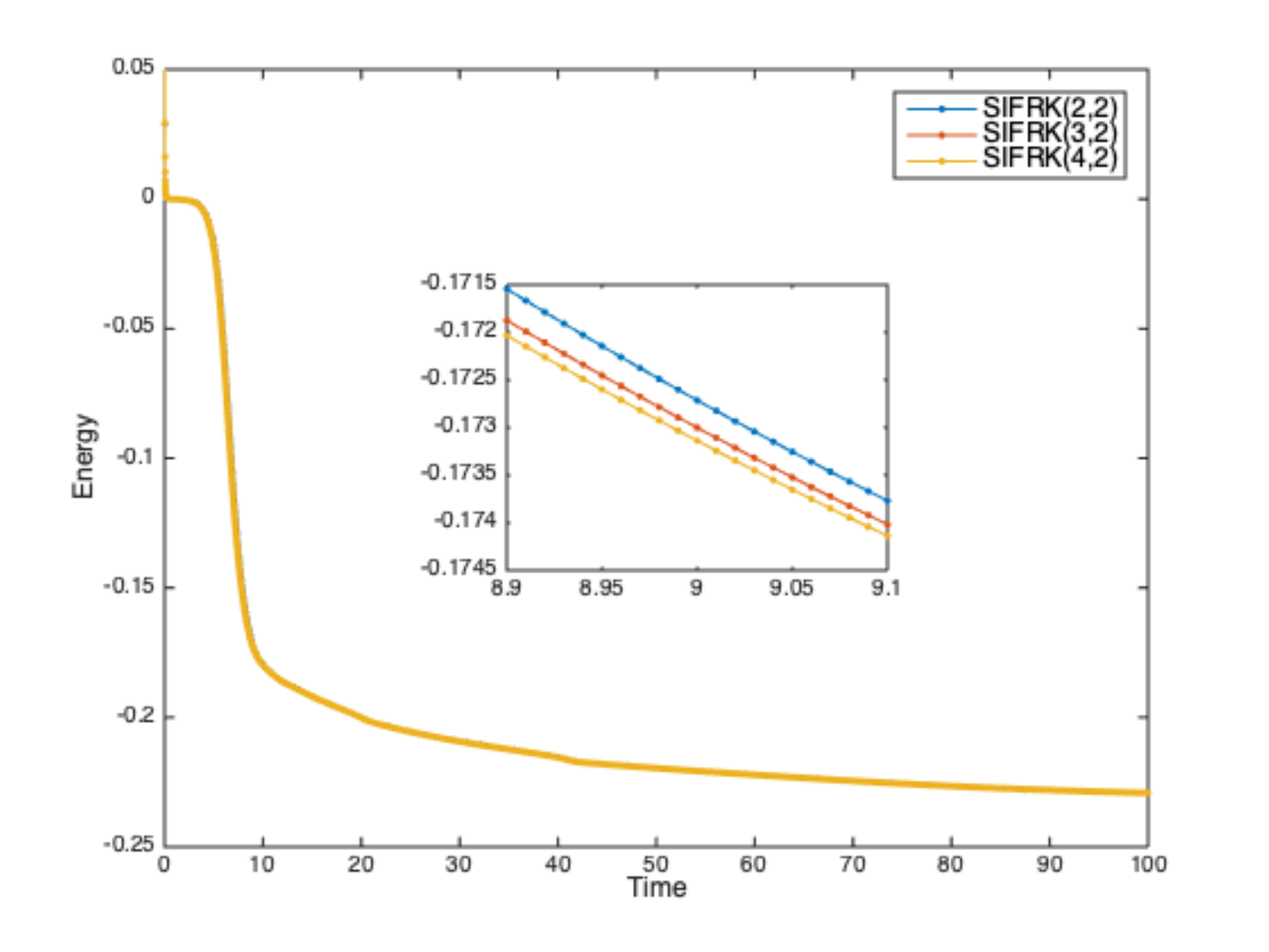}\hspace{-0.5cm}
   \includegraphics[width=0.49\textwidth]{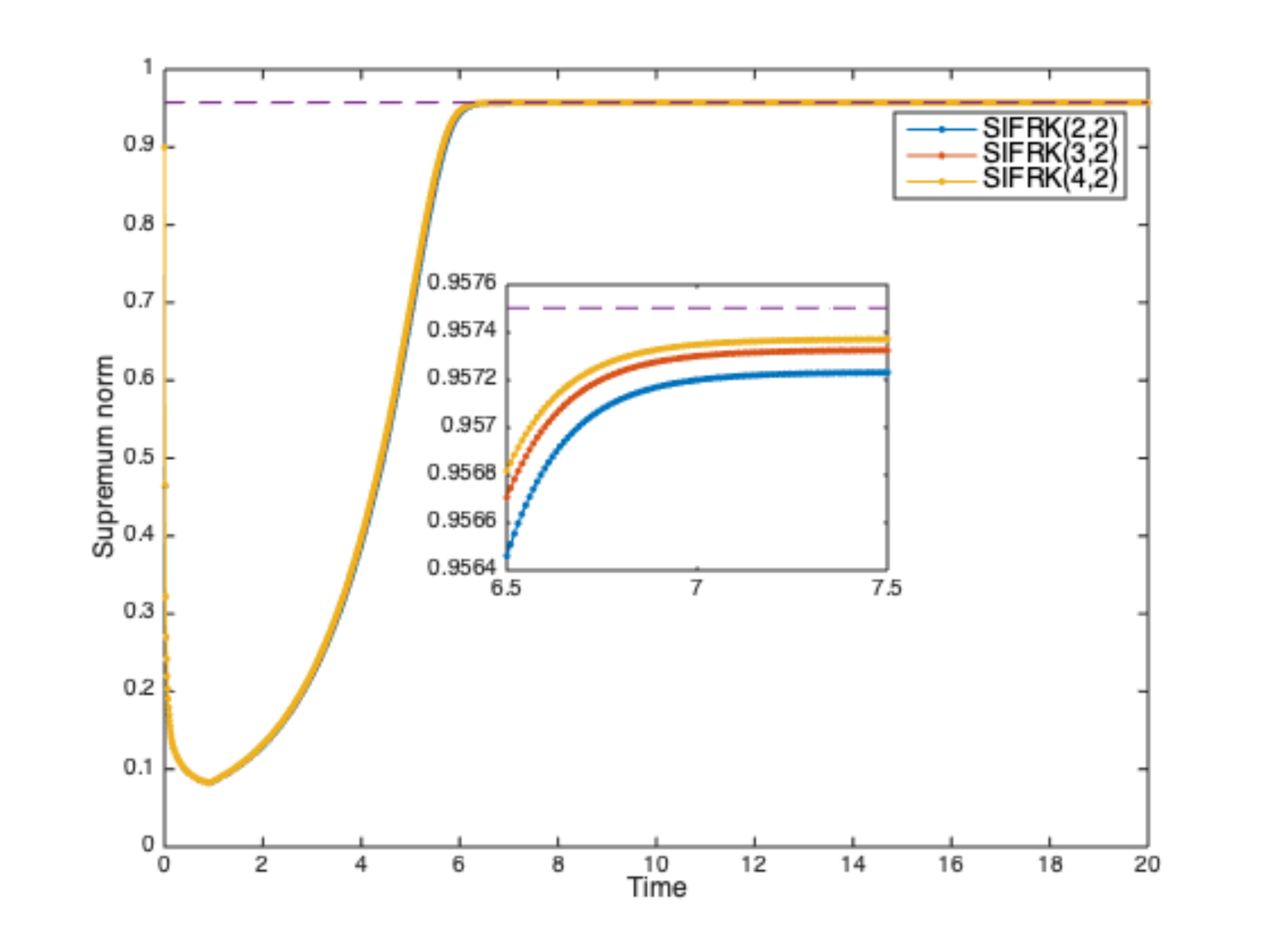}}
   \centerline{\includegraphics[width=0.49\textwidth]{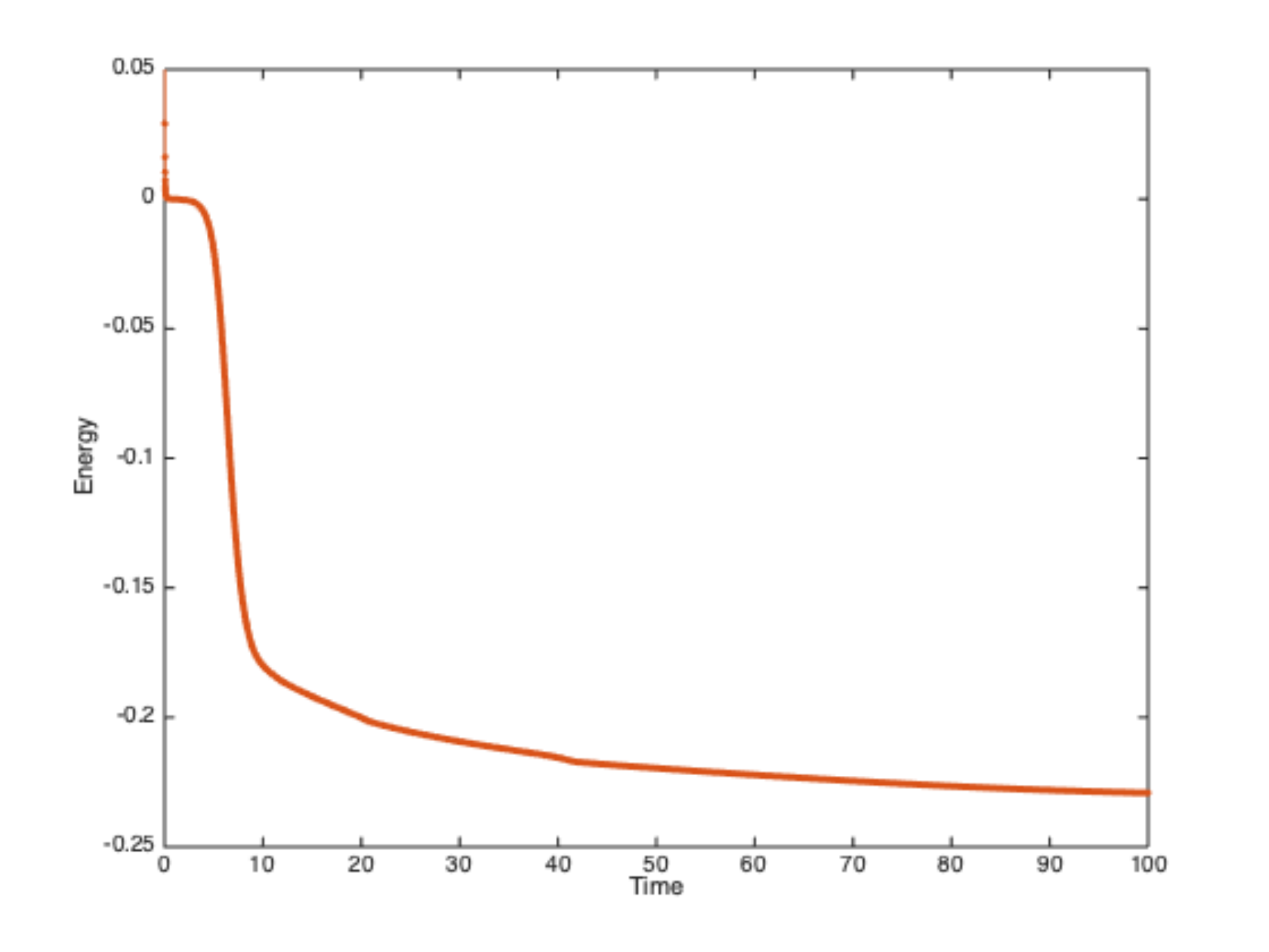}\hspace{-0.5cm}
   \includegraphics[width=0.49\textwidth]{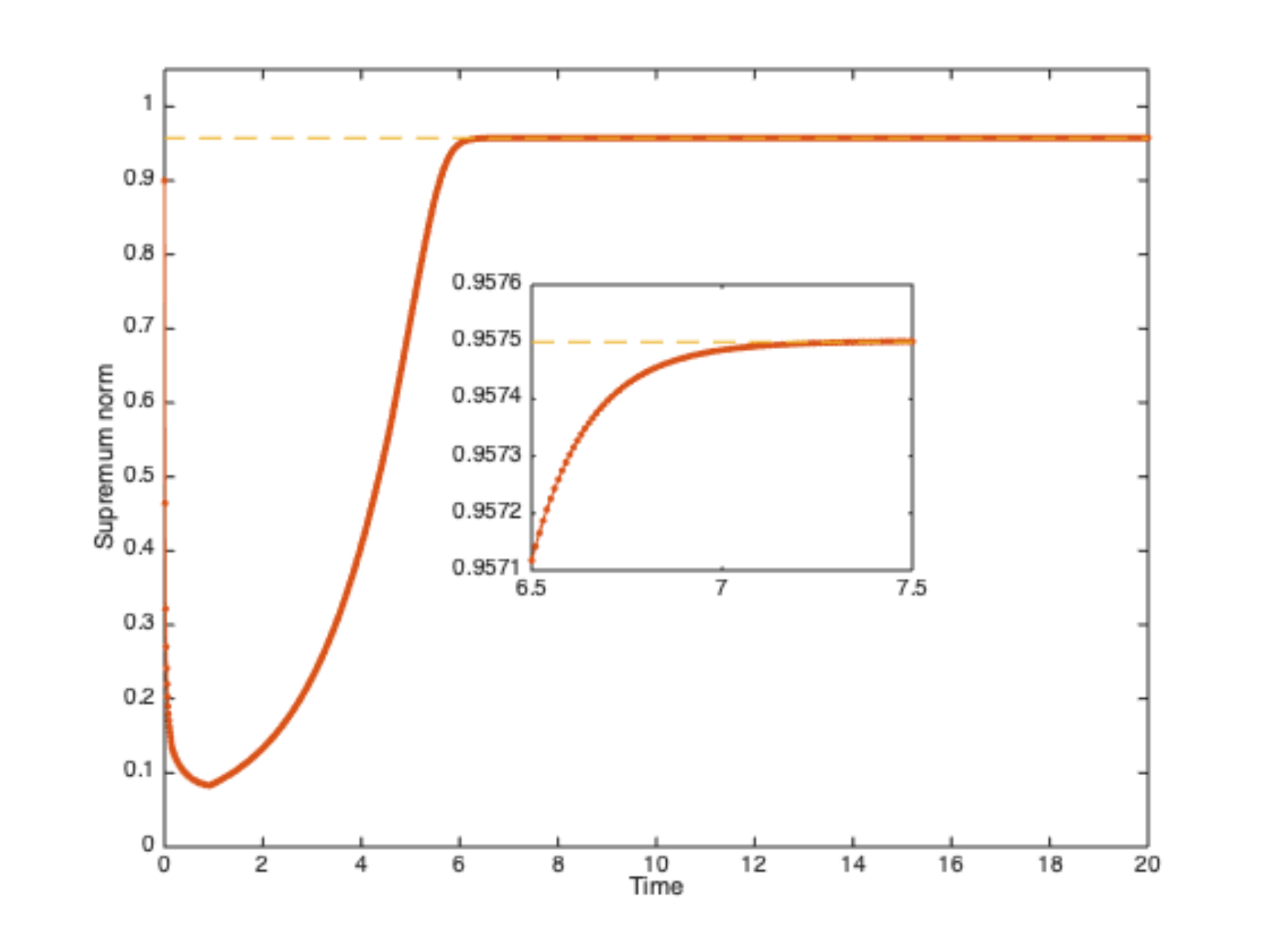}}
    \vspace{-0.2cm}
\caption{Evolutions of energies (left) and supremum norms (right) calculated by using sIFRK(1,1) (top), sIFRK($s$,2) (middle) and \textcolor{black}{Heun-sIFRK(3,3)} (bottom) \textcolor{black}{schemes} with $\tau=0.01$.}
\label{fig5.1}
\end{figure}

We partition the spatial domain by a uniform mesh with the size $h=1/1024$,
and a random data ranging from $-0.9$ to $0.9$ is generated on the mesh as the initial configuration.
We conduct the simulations by using the proposed sIFRK schemes with the {time-step} size $\tau=0.01$.
Fig.~\ref{fig5.1} shows the evolutions of the energies
and the supremum norms of the approximate solutions.
We observe that the energy decreases monotonically
and the MBP is  preserved perfectly  for all of them.
However, there is an obvious large gap between the theoretical bound $0.9575$
and the supremum norm of the steady state obtained by sIFRK(1,1).
The reason is what we have mentioned in Remark \ref{rmk_intstage}, that is,
$g_1(\kappa\tau)<1$ and the difference is of order $\mathcal{O}((\kappa\tau)^2)$ as discussed in Section \ref{sec4.1}.
This implies that the first-order sIFRK method is practically not accurate
although stable when the time-step size $\tau$ is not small enough.

Now, we consider a classic example for simulating a shrinking bubble
driven by the Allen-Cahn equation \eqref{eq5.4} with  $\epsilon=0.01$
and
\begin{equation}
\label{eq5.6}
f(u)=u-u^3
\end{equation}
subject to the homogeneous Neumann boundary condition.
The initial bubble is given by
\begin{equation*}
u_0(x,y)=\left\{
\begin{array}{rl}
1, & \text{if }x^2+y^2\leq 0.25^2,\\
-1, & \text{otherwise},
\end{array}
\right.
\end{equation*}
and  illustrated in \figurename~\ref{fig5.7}.

\begin{figure}[!ht]
\centerline{
   \includegraphics[width=0.49\textwidth]{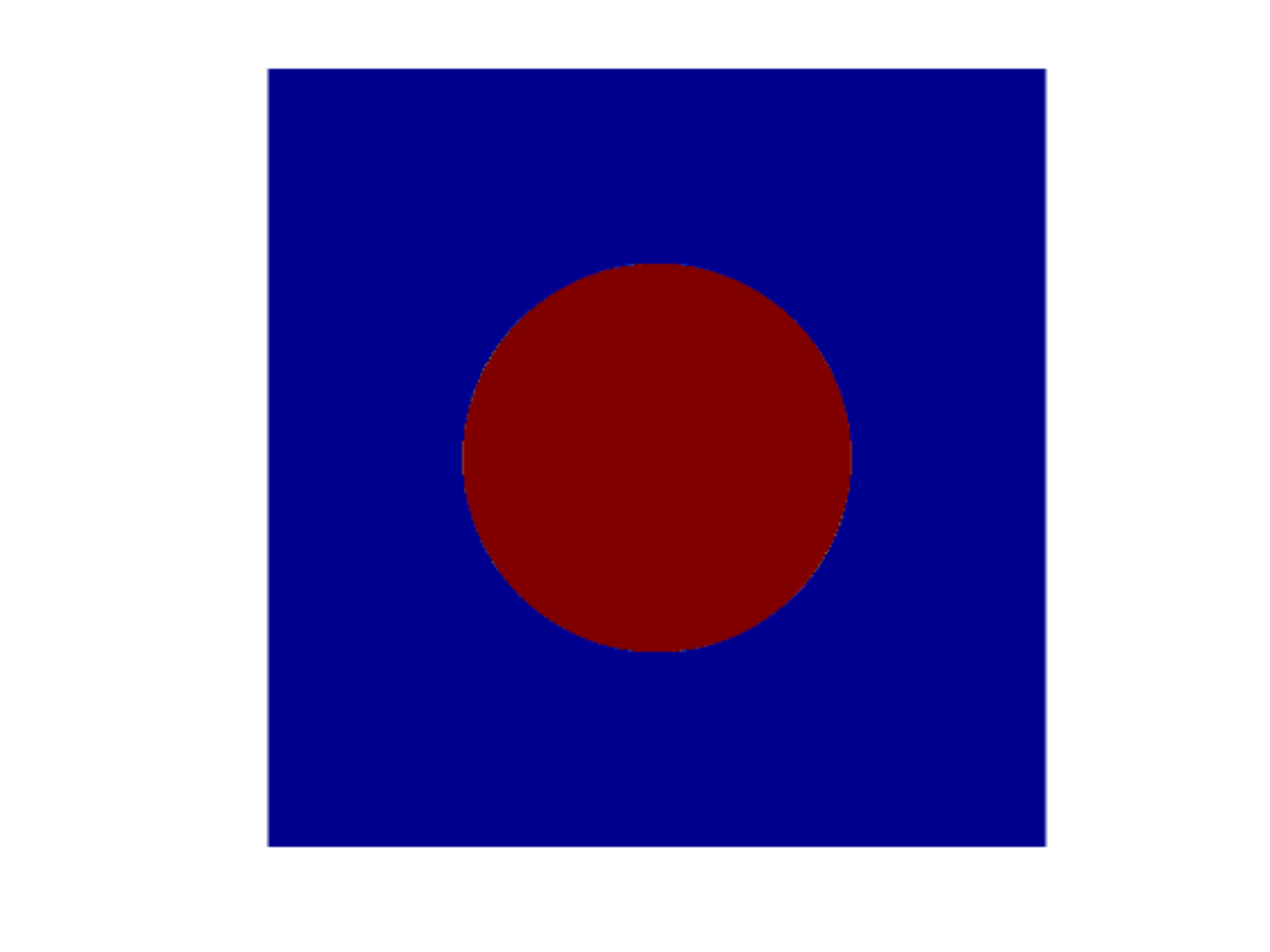}\hspace{-0.5cm}
   \includegraphics[width=0.49\textwidth]{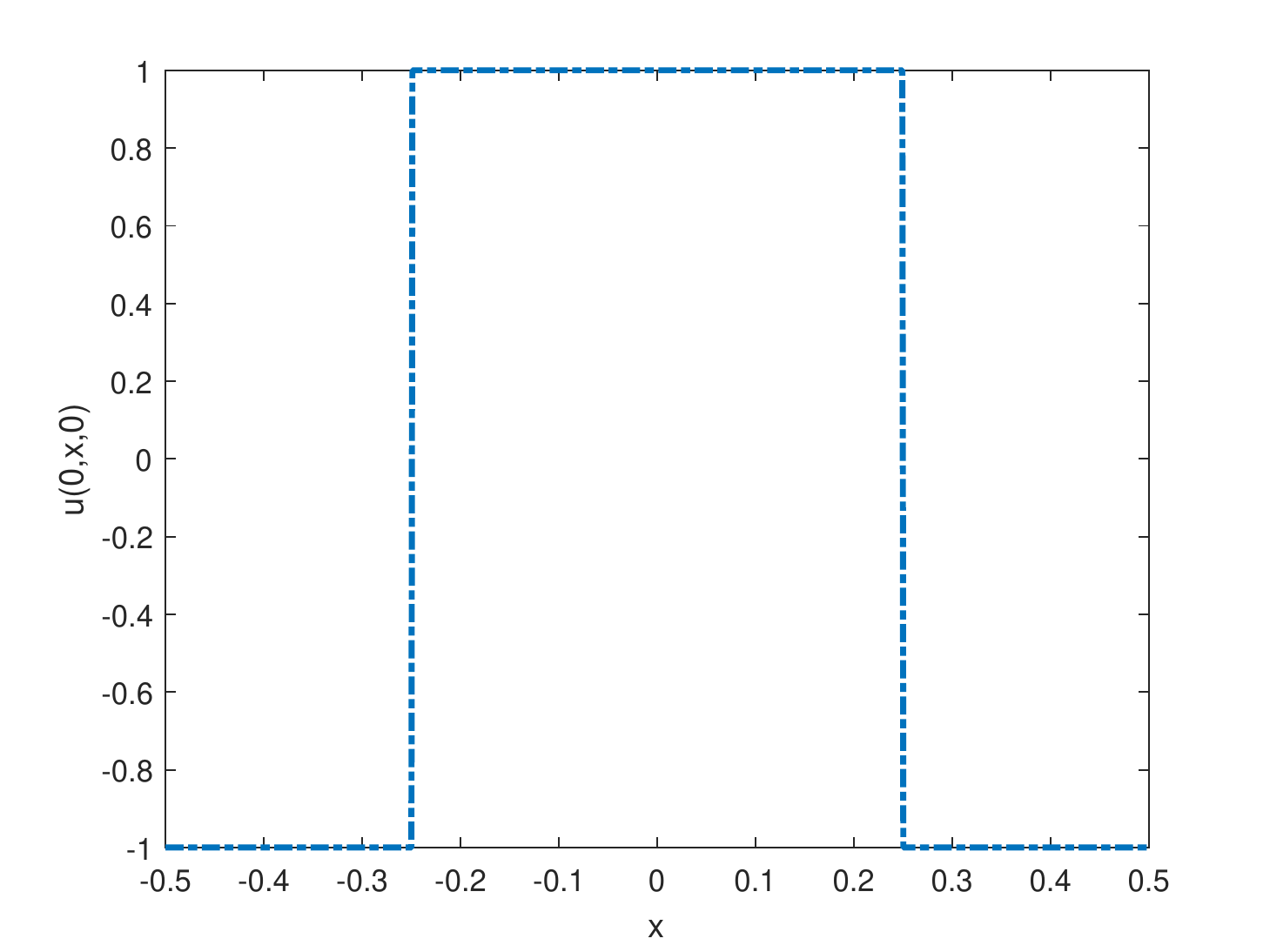}}
    \vspace{-0.2cm}
\caption{Initial configuration of the bubble.
Left: the surface-project view; right: the cross-section view at $y=0$.}
\label{fig5.7}
\end{figure}

The sIFRK(2,2) scheme is adopted and the parameters of the space-time mesh are set to be $h=1/1024$ and $\tau=0.01$.
\figurename~\ref{fig5.8} presents the evolutions of the bubble at times $t=50$,  $100$, $150$, $200$, $250$ and $300$, respectively,
and the left graph in \figurename~\ref{fig5.9} gives the corresponding cross-section views with $y=0$.
The right graph in \figurename~\ref{fig5.9} presents the evolution of the energy, which is  monotonically decreasing.
The bubble shrinks smaller and smaller during the evolution and finally vanishes at about $t=310$.

\begin{figure}[!ht]
  \centerline{
   \includegraphics[width=0.32\textwidth]{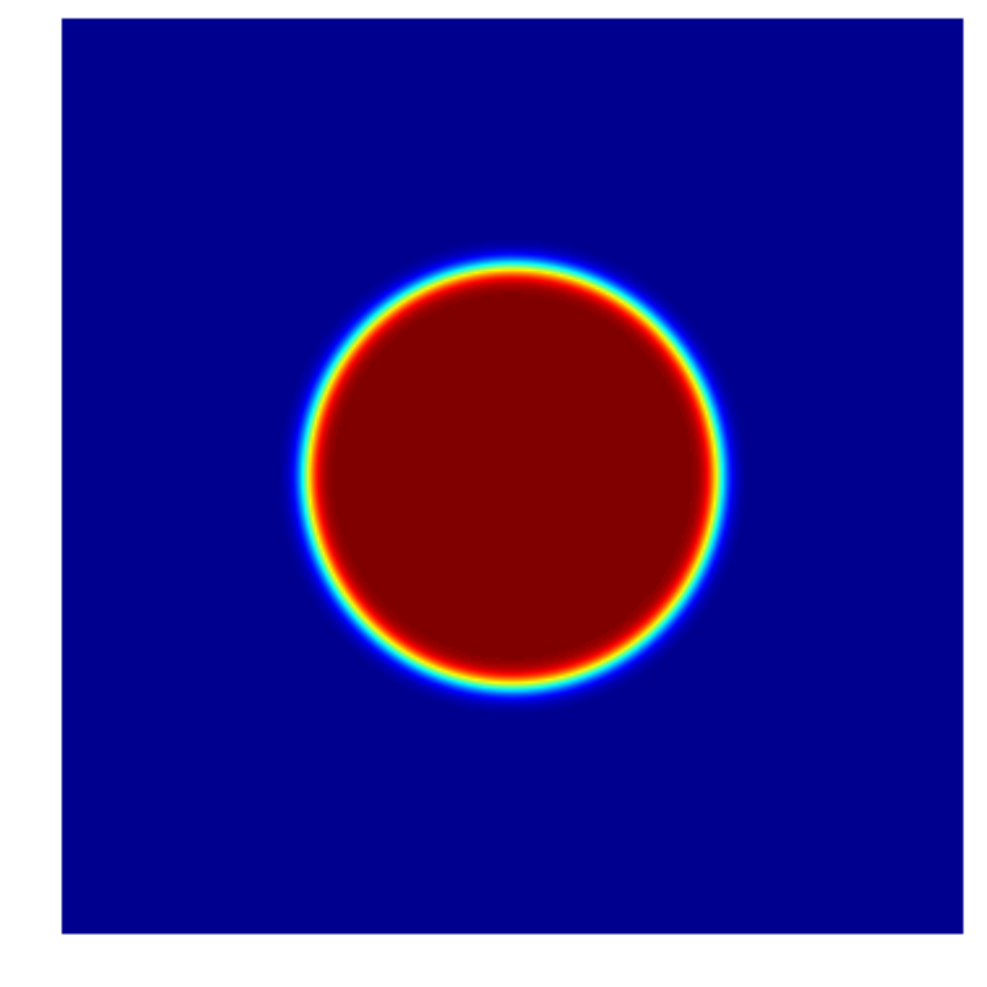}\hspace{-0.1cm}
   \includegraphics[width=0.32\textwidth]{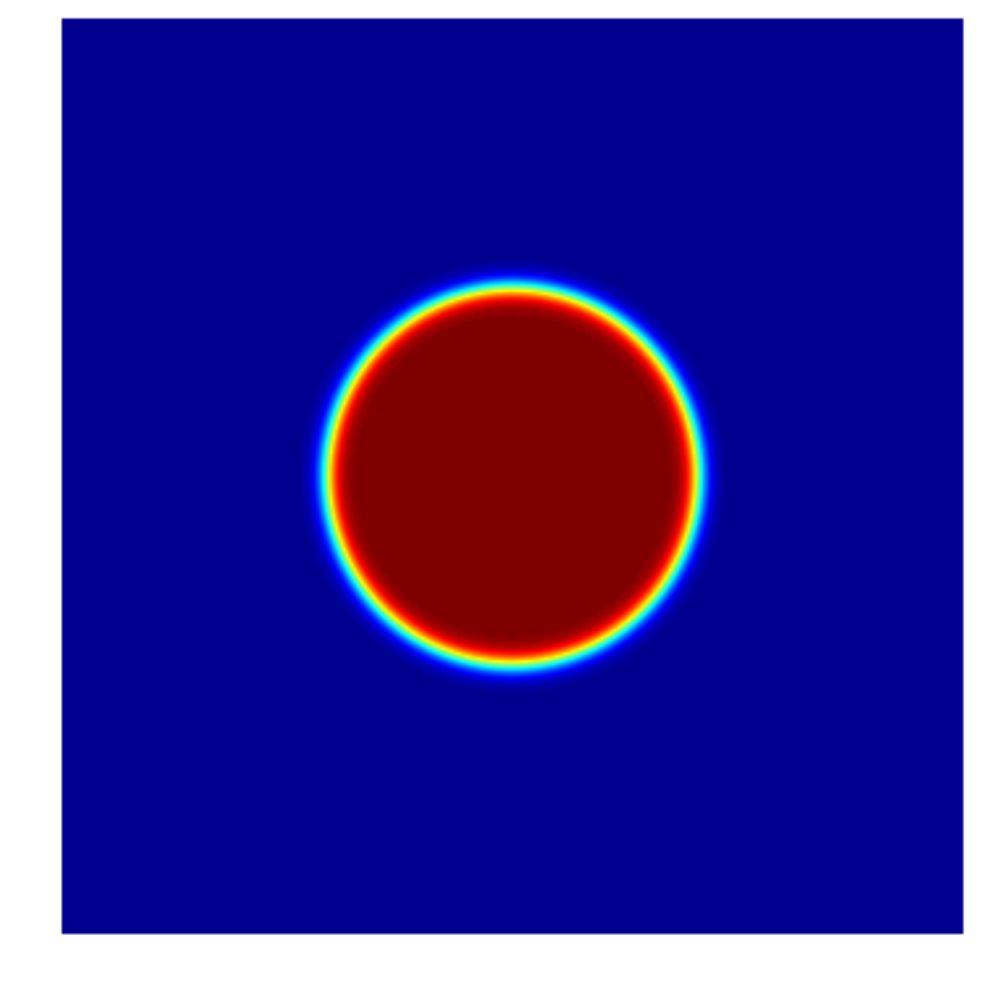}\hspace{-0.1cm}
  \includegraphics[width=0.32\textwidth]{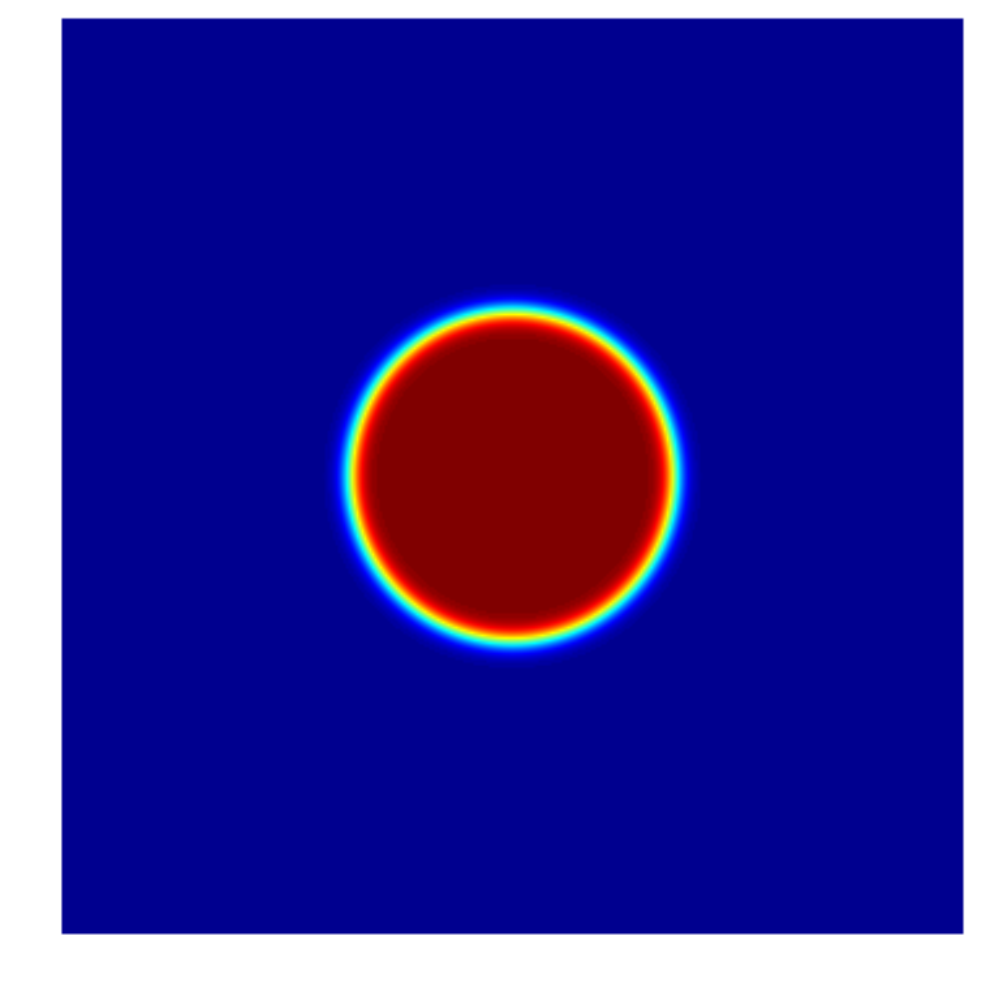}}
   \centerline{
   \includegraphics[width=0.32\textwidth]{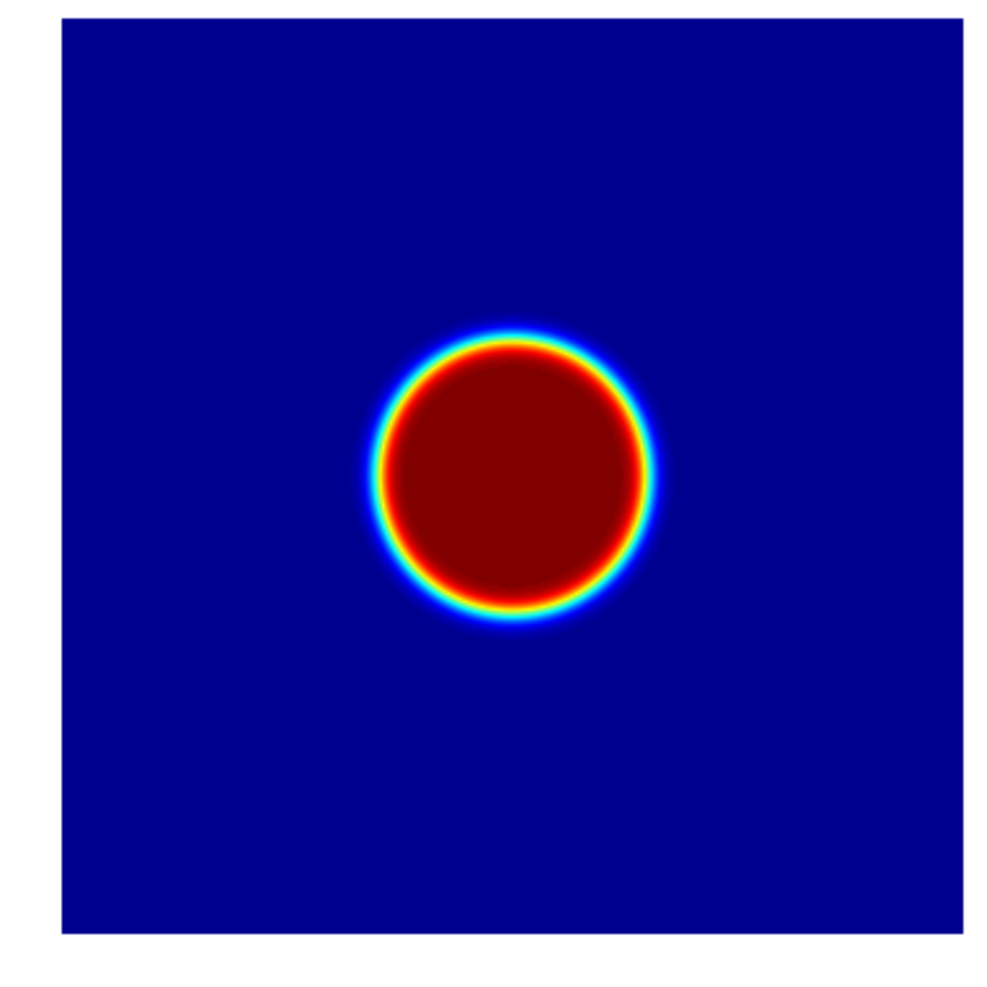}\hspace{-0.1cm}
  \includegraphics[width=0.32\textwidth]{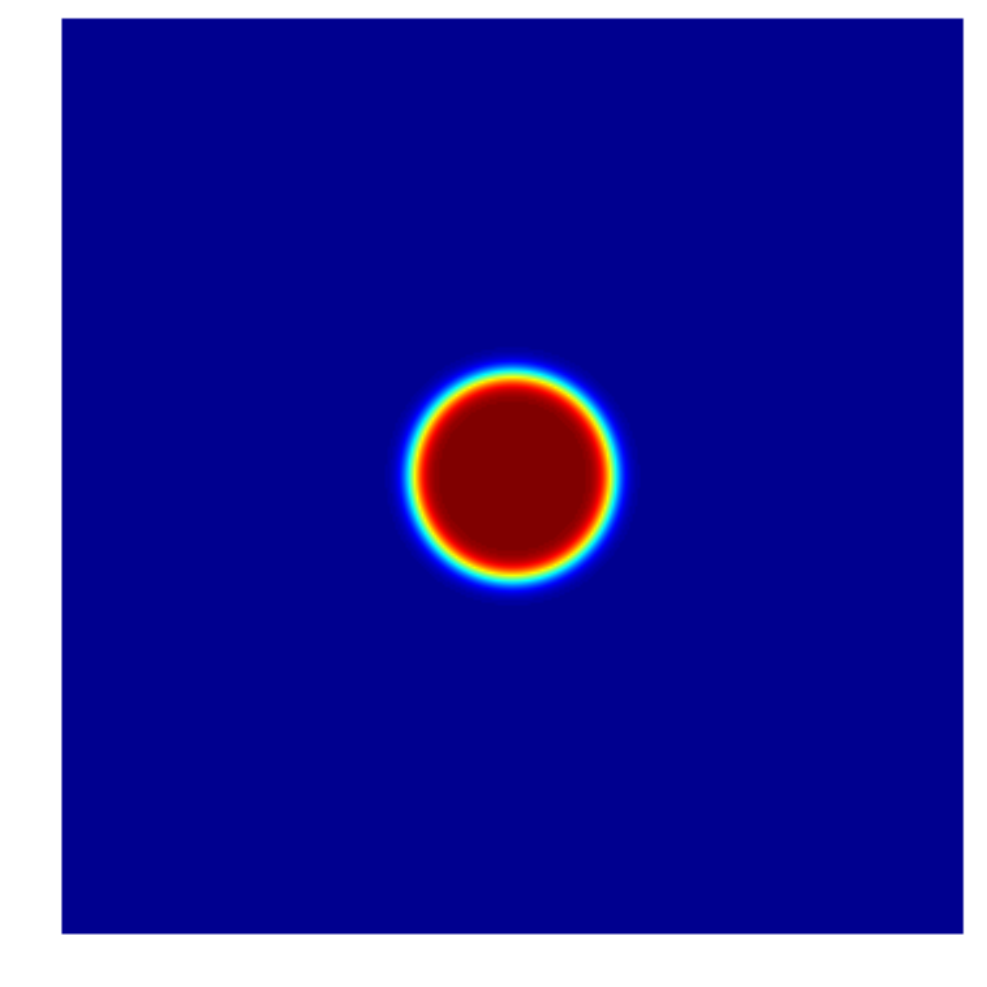}\hspace{-0.1cm}
   \includegraphics[width=0.32\textwidth]{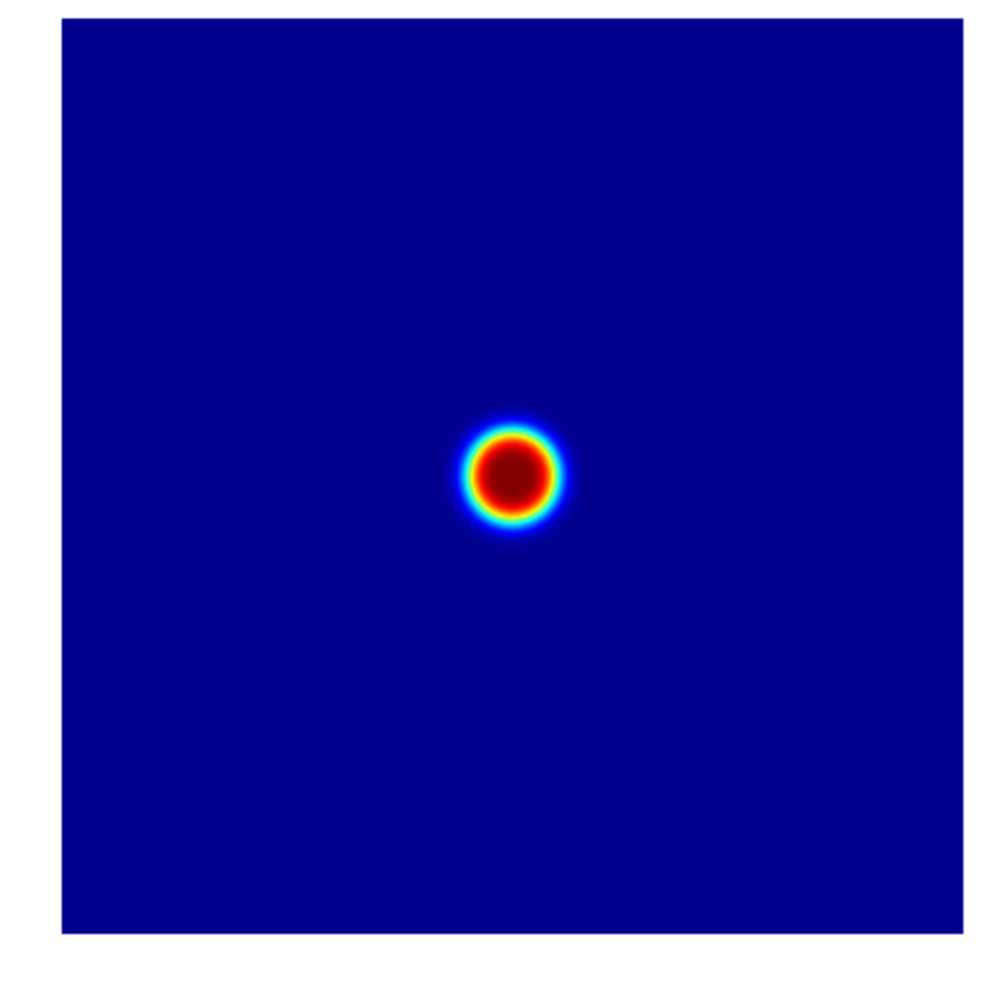}}
    \vspace{-0.2cm}
\caption{Evolution of the shrinking bubble obtained by using sIFRK(2,2) with \textcolor{black}{$\tau=0.01$}.
From left to right {and from top to bottom}: $t=50$,  $100$, $150$, $200$, $250$ and $300$.}
\label{fig5.8}
\end{figure}

\begin{figure}[!ht]
  \centerline{
  \includegraphics[width=0.49\textwidth]{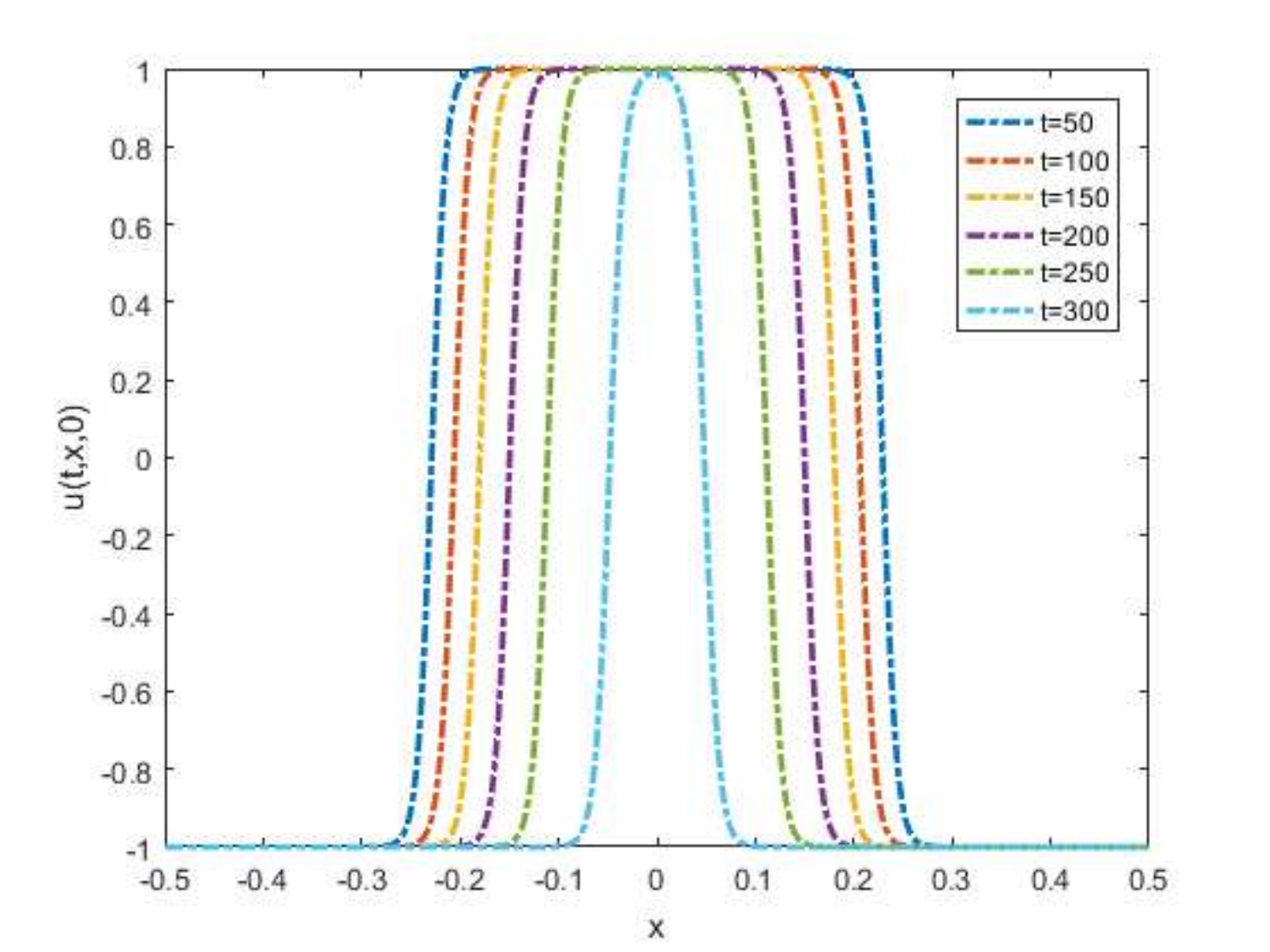}\hspace{-0.5cm}
   \includegraphics[width=0.49\textwidth]{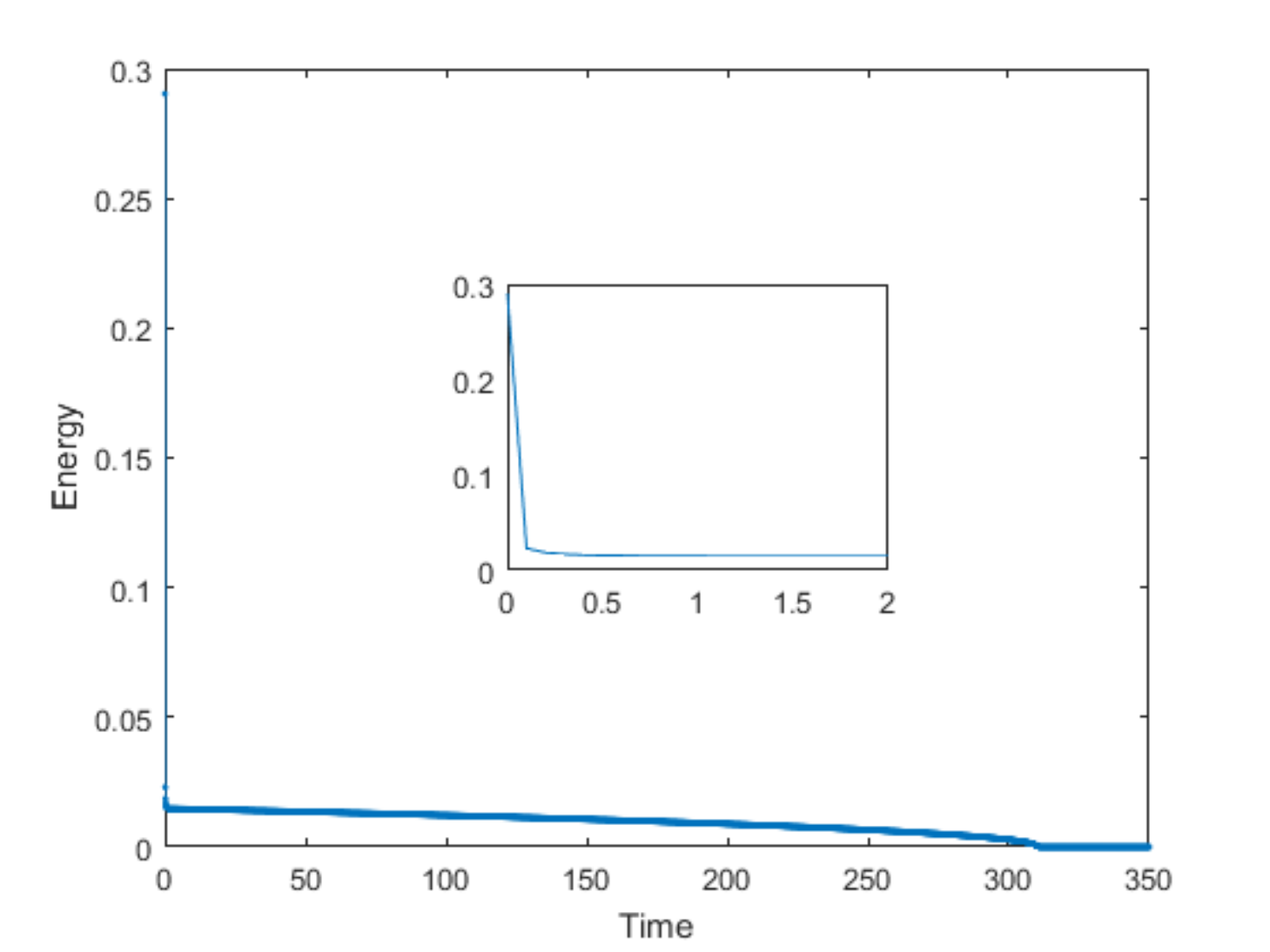}}
   \vspace{-0.2cm}
\caption{The cross-section view with $y=0$ (left) corresponding to \figurename~\ref{fig5.8}  and the evolution of the energy (right).}
\label{fig5.9}
\end{figure}

Next, we numerically show that the  SSP-sIFRK(2,2) scheme \eqref{ssps2}  is  not unconditionally MBP-preserving
in response to the discussion in Remark \ref{rem22}.
To this end, we consider the Allen-Cahn equation (\ref{eq5.4}) with $\epsilon =0.01$ and  \eqref{eq5.6} subject to the periodic boundary condition.
The initial data is generated by a set of random numbers ranging from $-0.9$ to $0.9$ uniformly on the spatial mesh with $h=1/256$.
The time-step size is set to be $\tau=0.1$, which is 10 times larger than that in the previous experiments.
We first run the simulation using SSP-sIFRK(2,2) taking the following form
\begin{subequations}
\begin{align*}
u^{(1)} & = e^{\tau\mathcal{L}^h_\kappa} (u^n+\tau\mathcal{N}[u^n]), \\
u^{n+1} & = \frac{1}{2} e^{\tau\mathcal{L}^h_\kappa} u^n + \frac{1}{2} (u^{(1)}+\tau\mathcal{N}[u^{(1)}]).
\end{align*}
\end{subequations}
Then we  re-run the simulation using sIFRK(2,2) with the same initial data.
Figs.~\ref{fig5.3} and \ref{fig5.4} present the evolutions of the phase structures
at $t=1$, $5$, $10$, $50$, $240$ and $440$ produced by SSP-sIFRK(2,2) and sIFRK(2,2),
respectively, which show that
the simulation results start to differ very soon although {we use the same initial data and space-time parameters.}
Figs.~\ref{fig5.5} and \ref{fig5.6} present the evolutions of
the corresponding supremum norms and the energies for SSP-sIFRK(2,2) and sIFRK(2,2), respectively.
It is observed that the MBP is preserved perfectly by sIFRK(2,2) for all time.
However, the solution by SSP-sIFRK(2,2) has the supremum norm beyond $1$ after $t=5.5$, which implies SSP-sIFRK(2,2) does not preserve the MBP in this case.

In addition, we  carry out the same experiment using sIFRK(2,2), but with different time-step sizes, $\tau=0.05$ and $0.01$.
We found that both  simulated processes of the phase transition are almost identical to that  illustrated in \figurename~\ref{fig5.4}.
The corresponding evolutions of the supremum norms and the energies are also given and compared with those produced with $\tau=0.1$ in \figurename~\ref{fig5.6}, which shows very small differences between them. These observations partly imply that the error constant in Theorem \ref{errthm} does not change much when those different time-step sizes are adopted, and the performance of sIFRK(2,2) is still satisfactory, in terms of accuracy and efficiency, for practical simulations with moderately large time-step sizes.

\begin{figure}[!ht]
  \centerline{
   \includegraphics[width=0.32\textwidth]{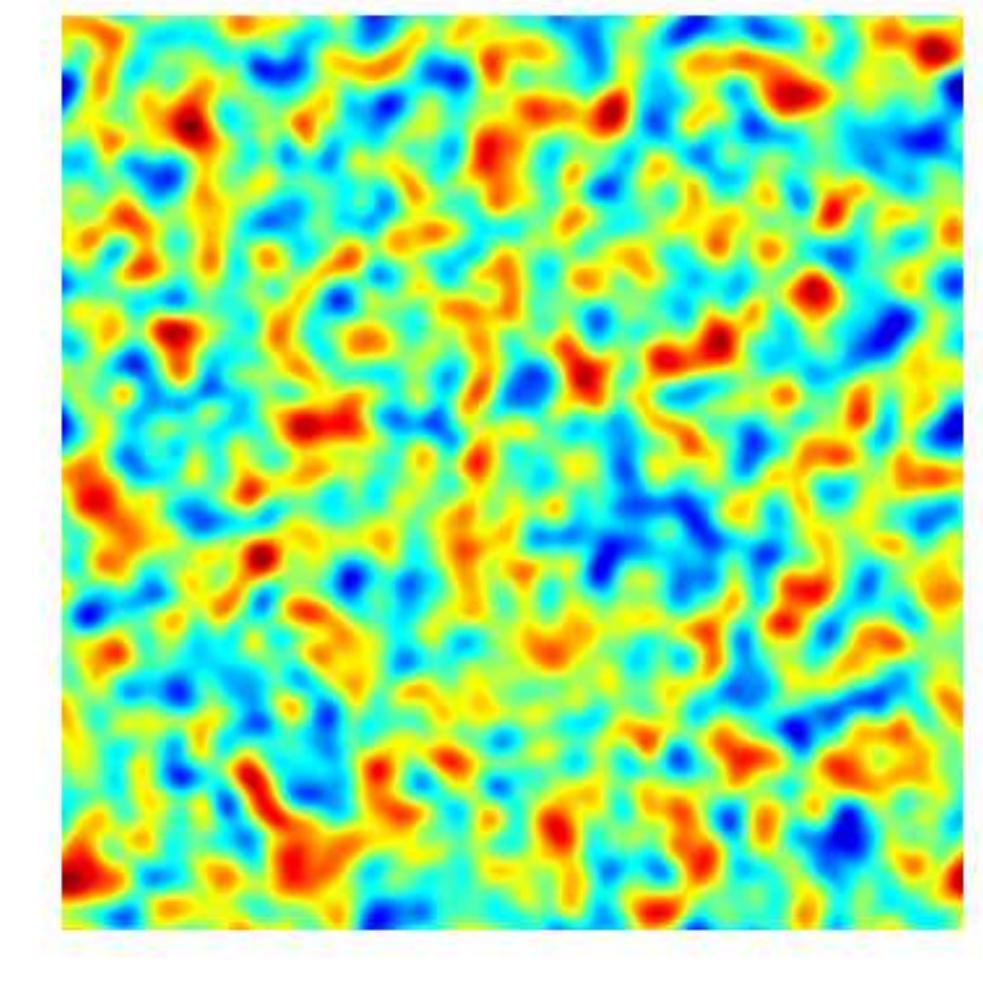}\hspace{-0.1cm}
   \includegraphics[width=0.32\textwidth]{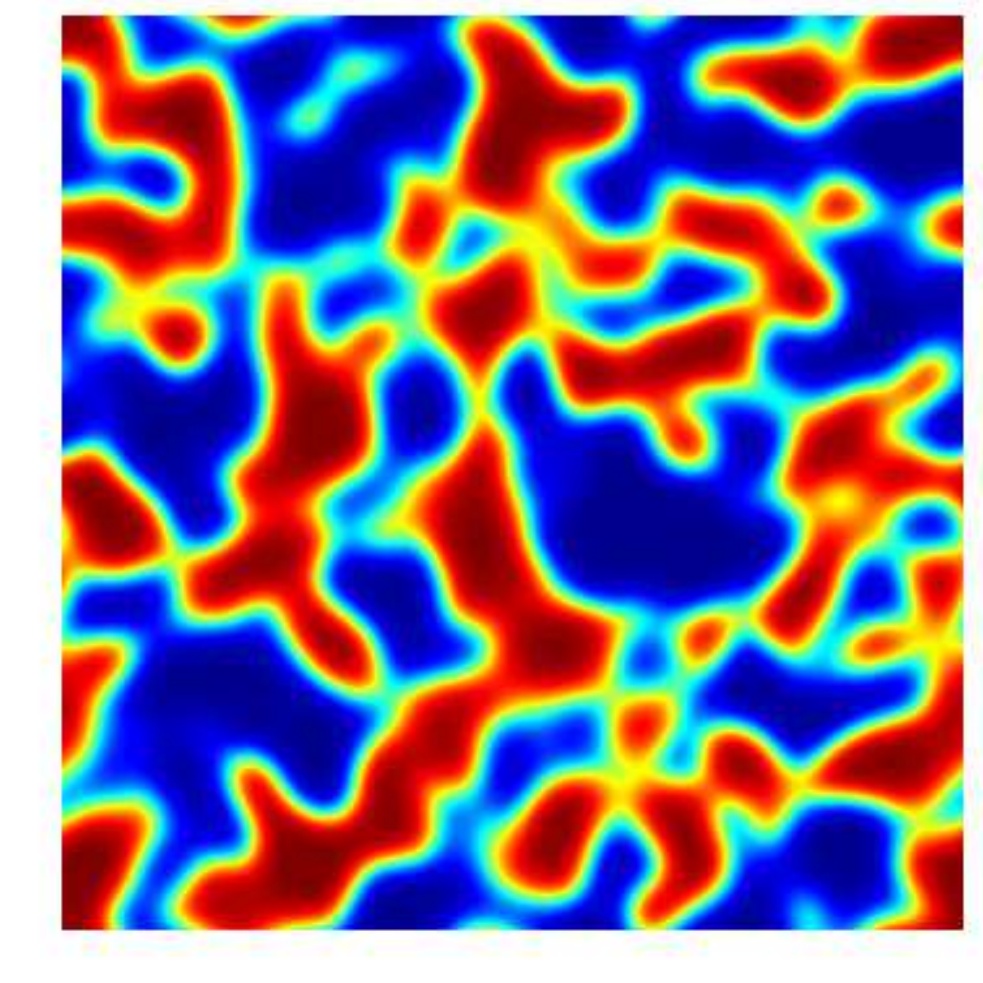}\hspace{-0.1cm}
   \includegraphics[width=0.32\textwidth]{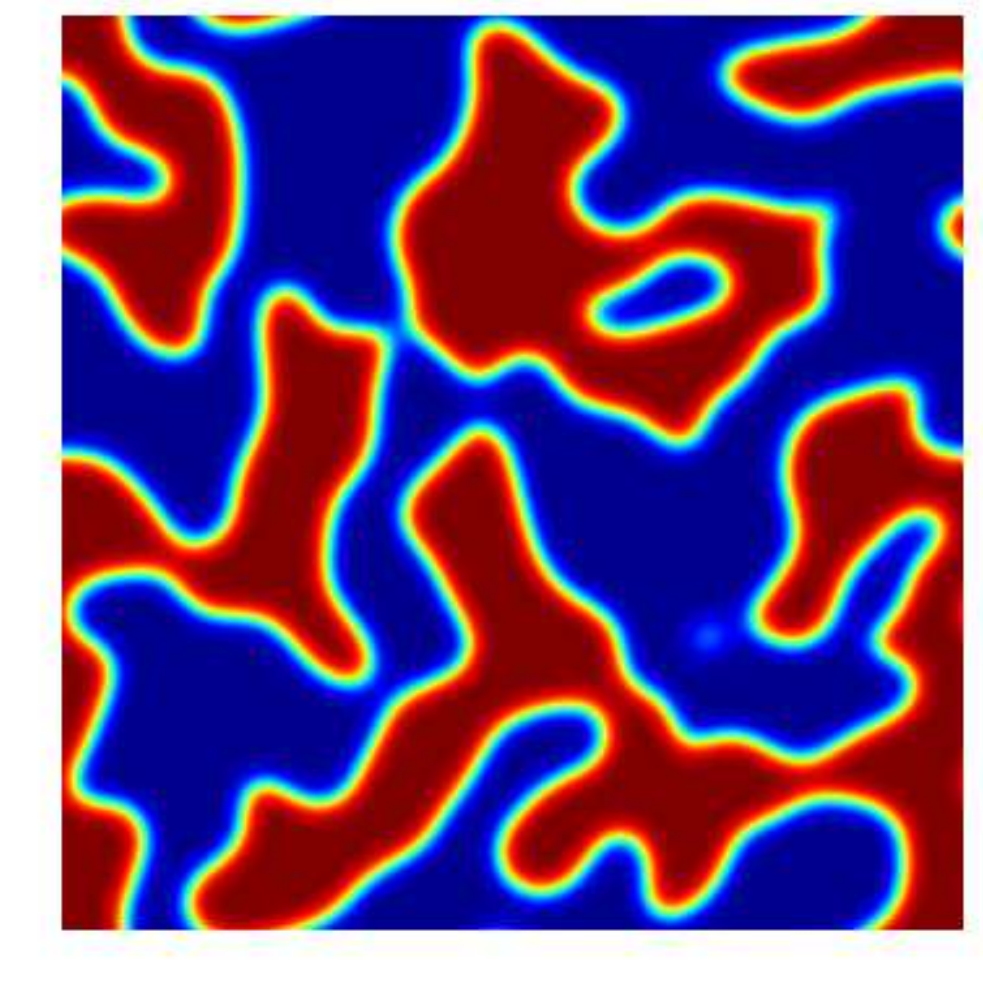}}
   \centerline{
   \includegraphics[width=0.32\textwidth]{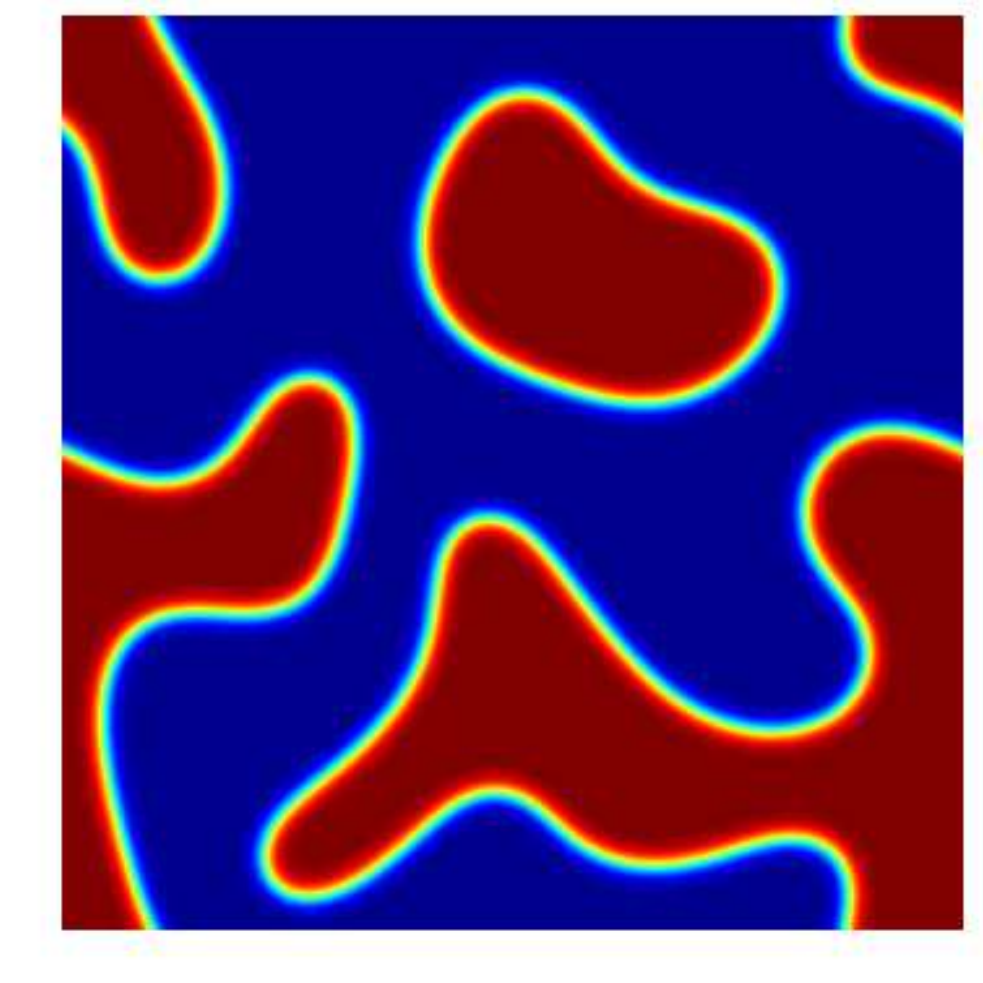}\hspace{-0.1cm}
   \includegraphics[width=0.32\textwidth]{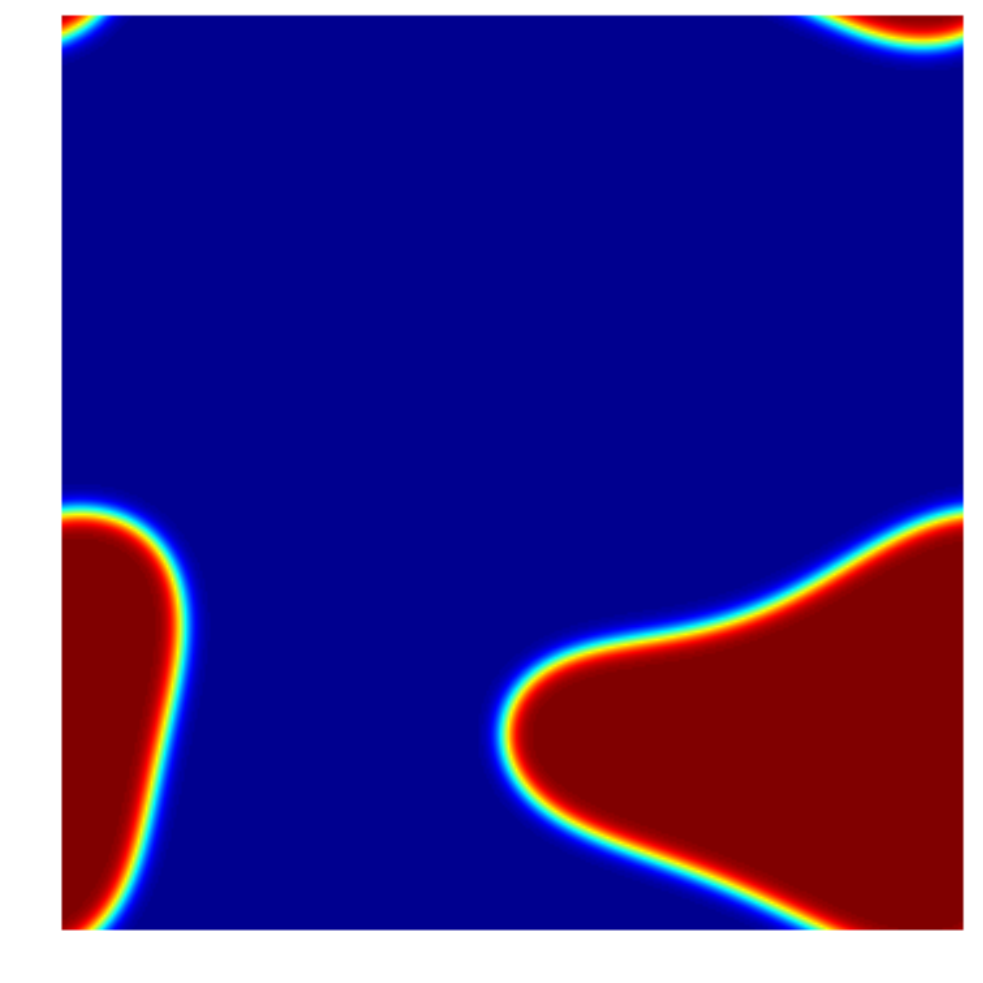}\hspace{-0.1cm}
   \includegraphics[width=0.32\textwidth]{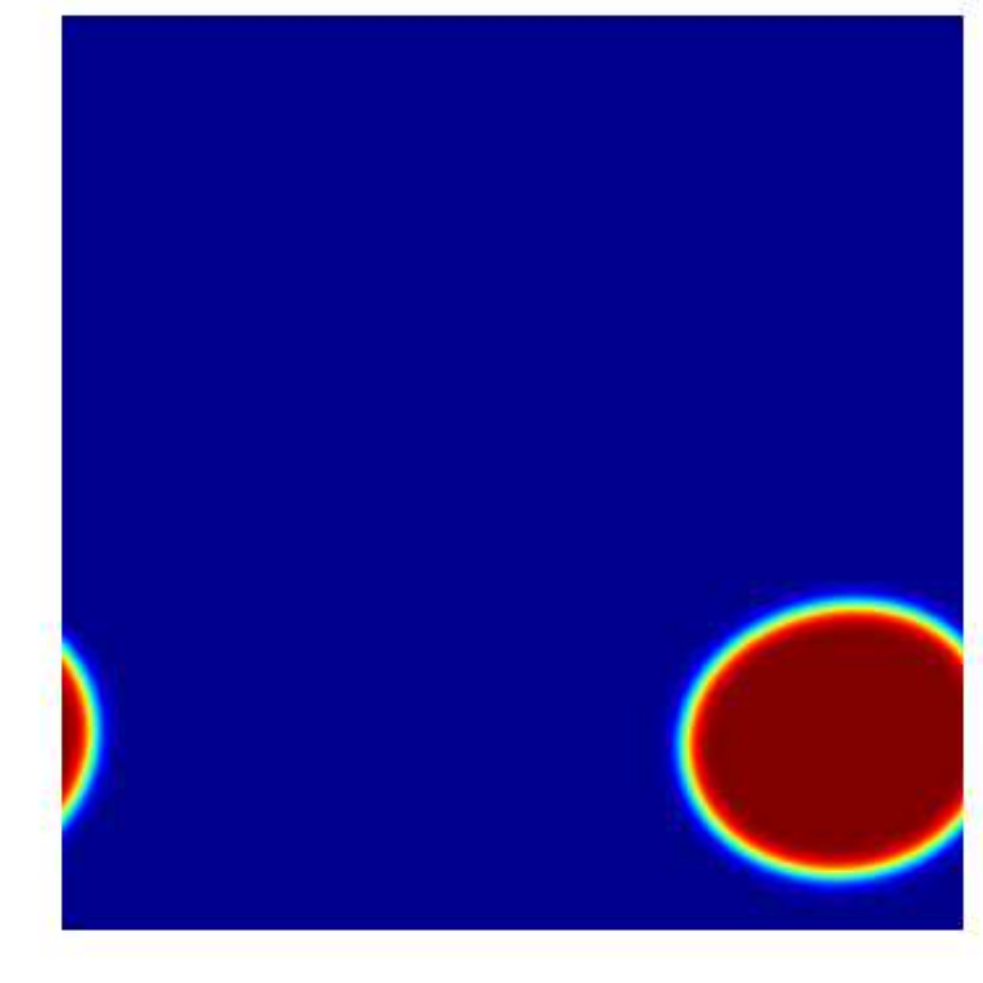}}
   \vspace{-0.2cm}
\caption{Evolution of the phase structure obtained by using SSP-sIFRK(2,2) {with $\tau=0.1$}.
From left to right and from top to bottom: $t=1$, $5$, $10$, $50$, $240$ and $440$.}
\label{fig5.3}
\end{figure}

\begin{figure}[!ht]
  \centerline{
   \includegraphics[width=0.49\textwidth]{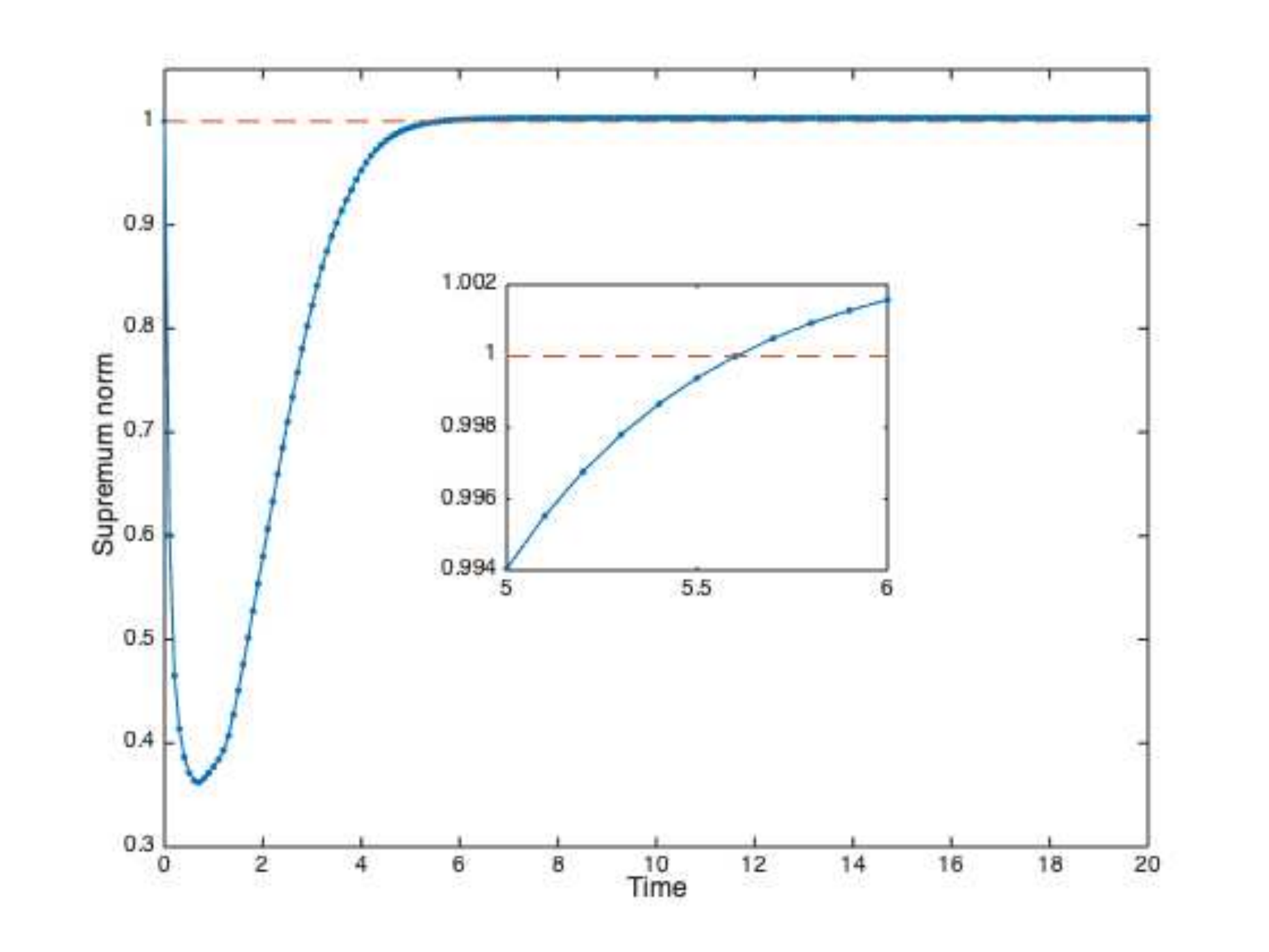}\hspace{-0.5cm}
   \includegraphics[width=0.49\textwidth]{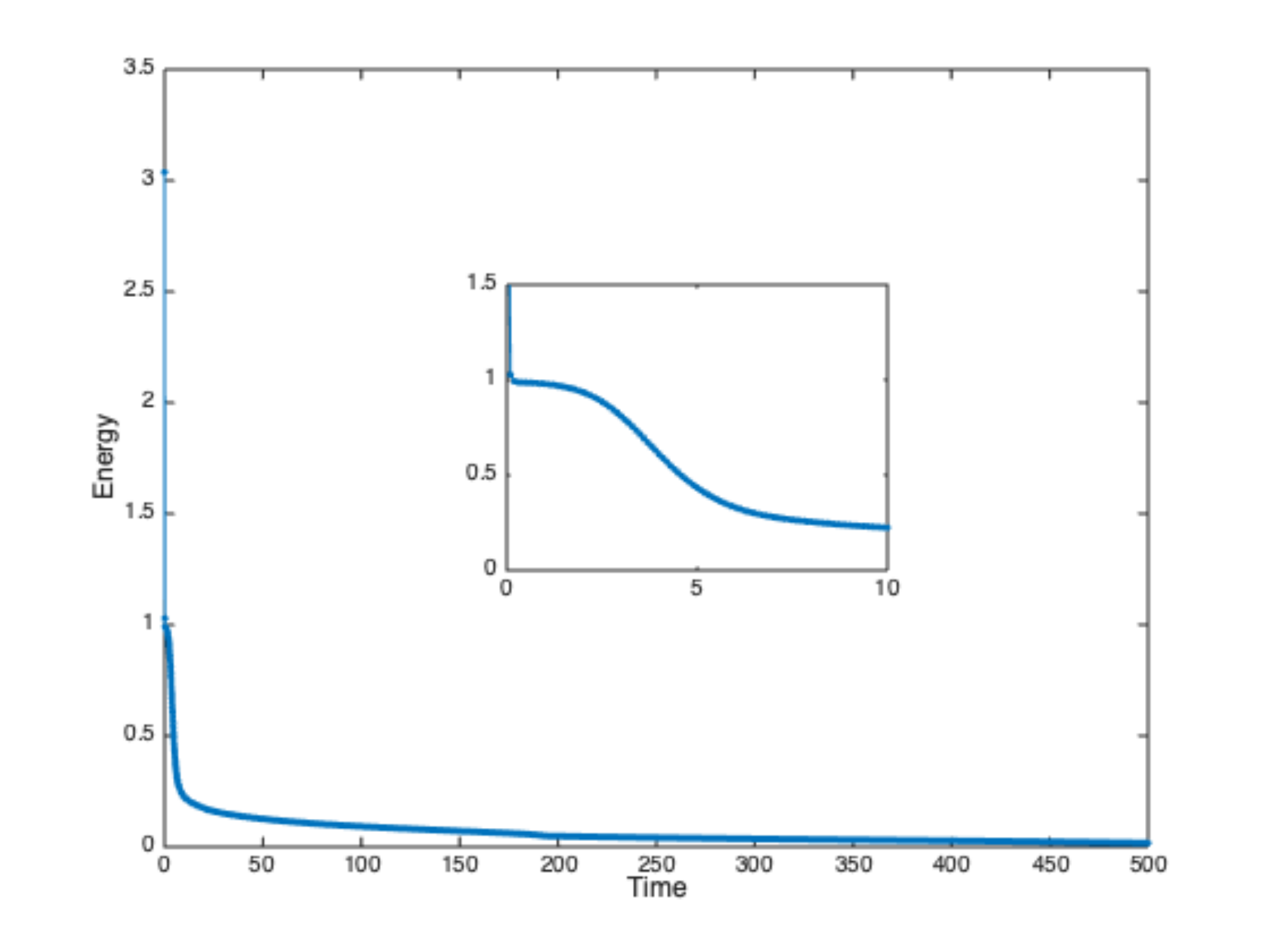}}
     \vspace{-0.2cm}
\caption{Evolutions of the supremum norm (left) and the energy (right) obtained by using SSP-sIFRK(2,2) {with $\tau=0.1$}.}
\label{fig5.5}
\end{figure}

\begin{figure}[!ht]
  \centerline{
   \includegraphics[width=0.32\textwidth]{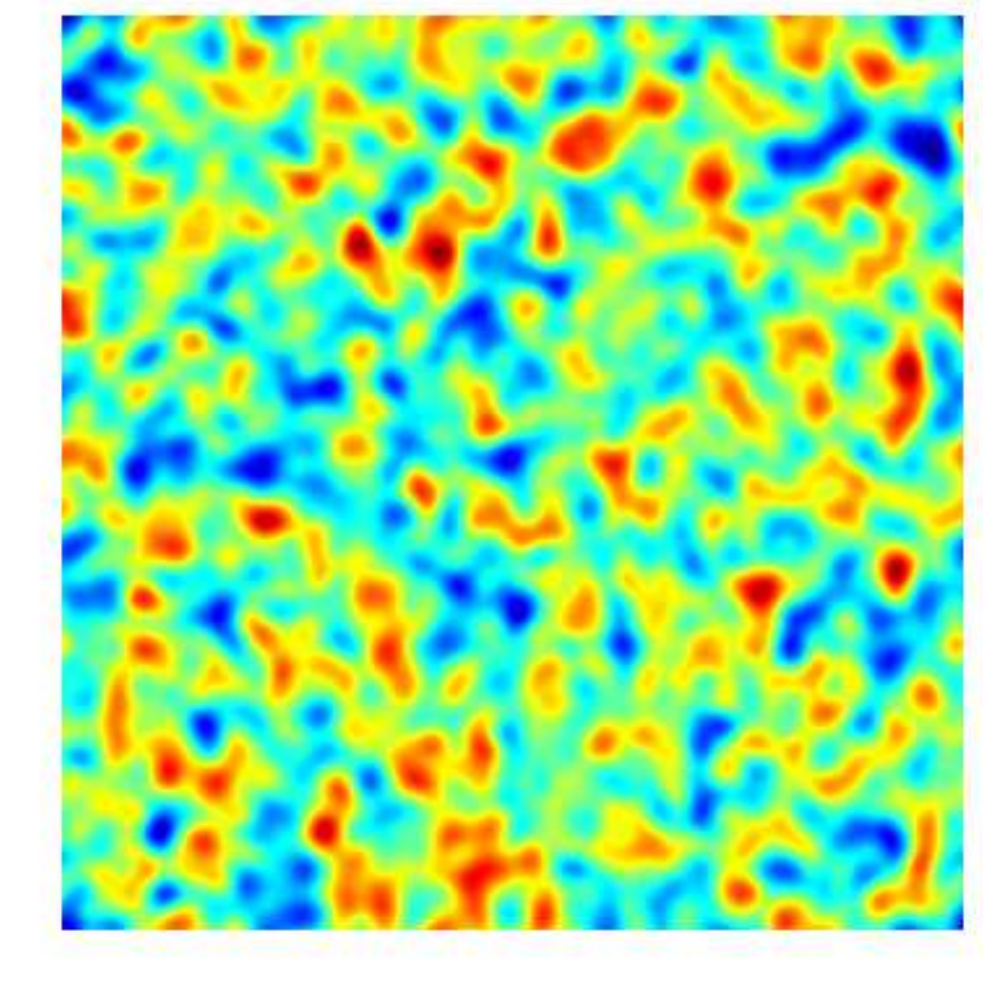}\hspace{-0.1cm}
   \includegraphics[width=0.32\textwidth]{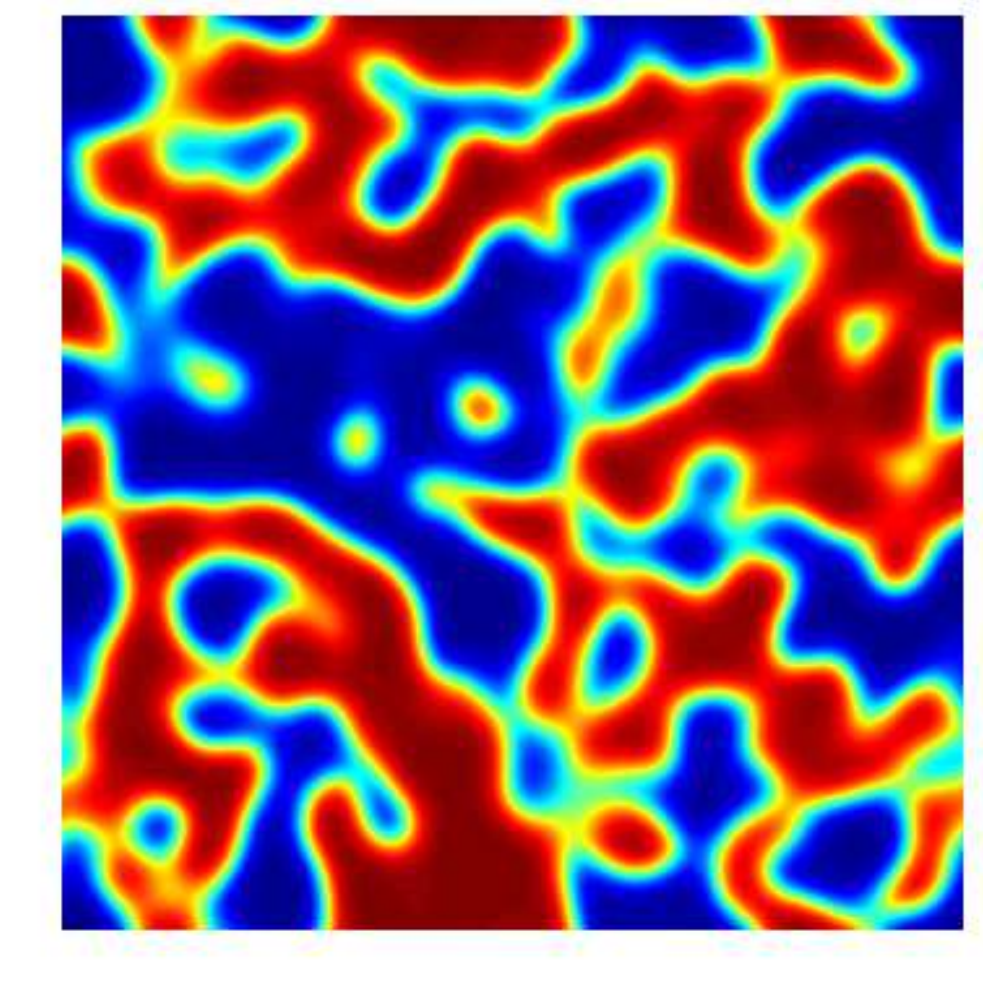}\hspace{-0.1cm}
   \includegraphics[width=0.32\textwidth]{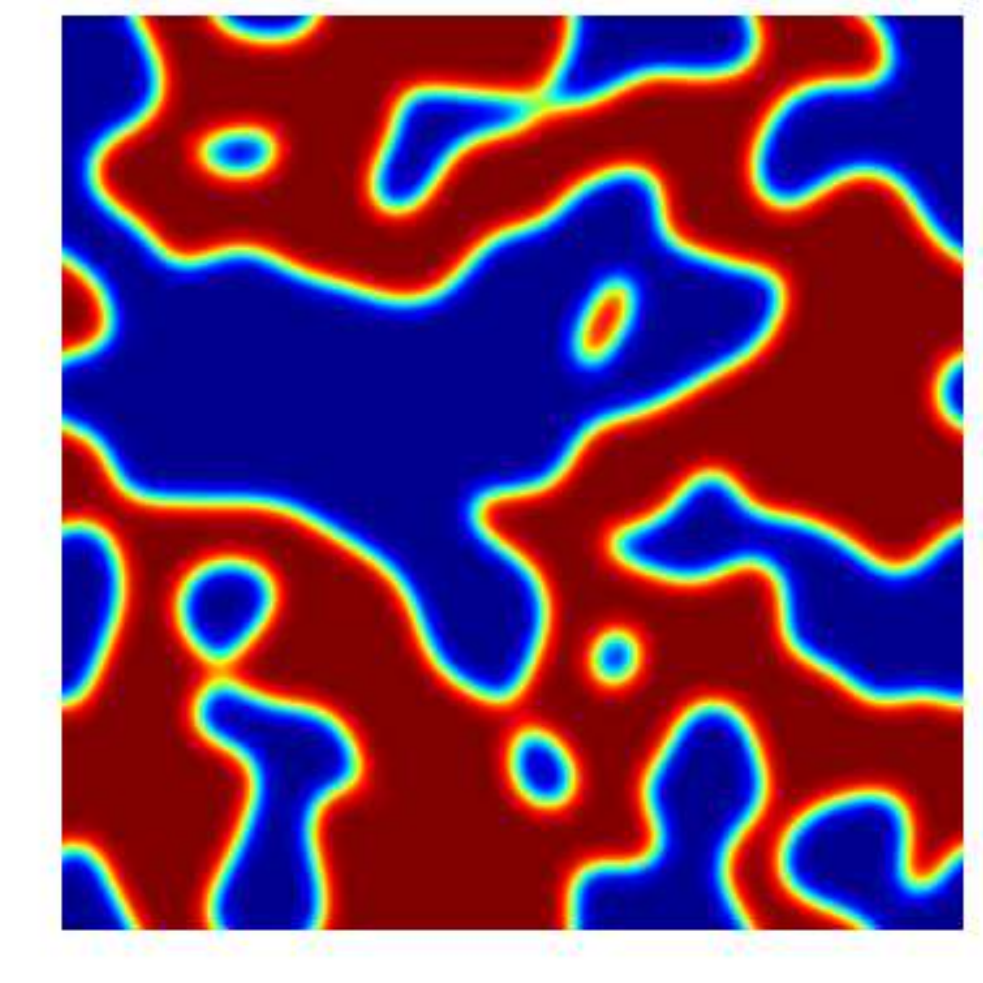}}
   \centerline{
    \includegraphics[width=0.32\textwidth]{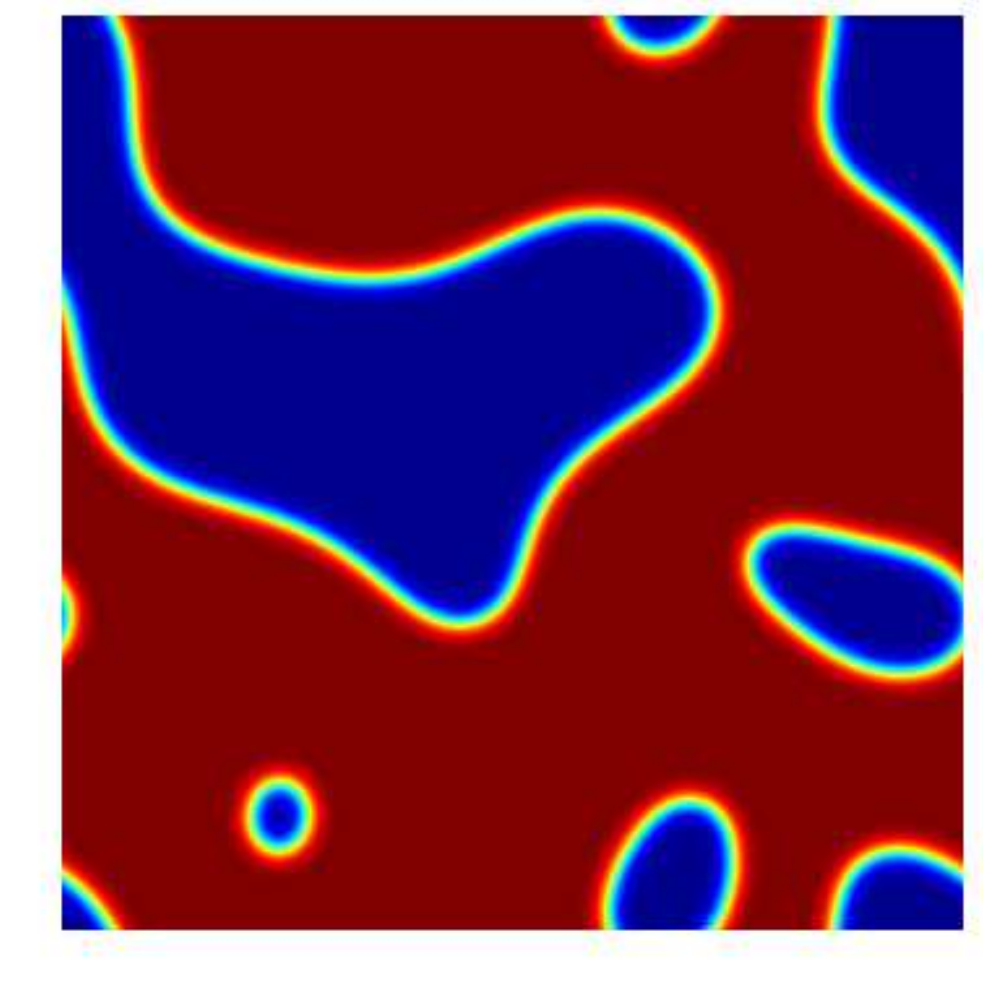}\hspace{-0.1cm}
   \includegraphics[width=0.32\textwidth]{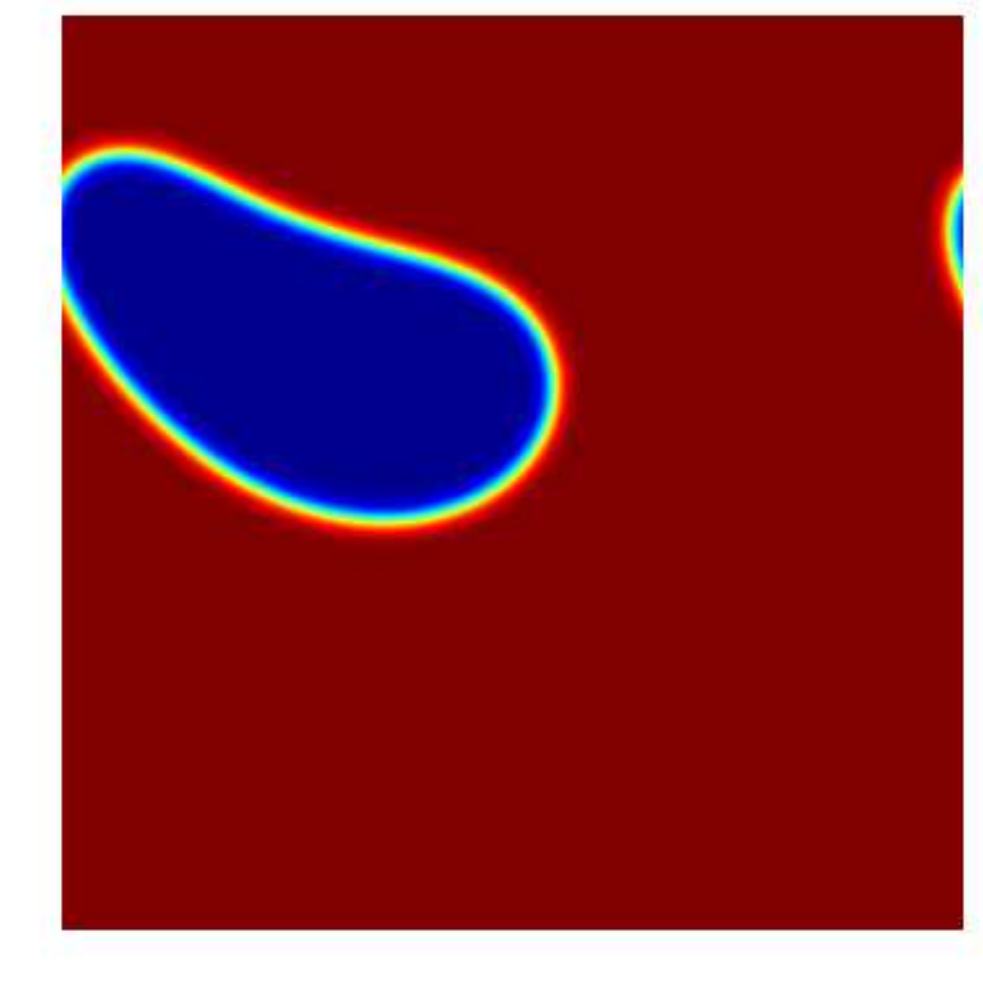}\hspace{-0.1cm}
   \includegraphics[width=0.32\textwidth]{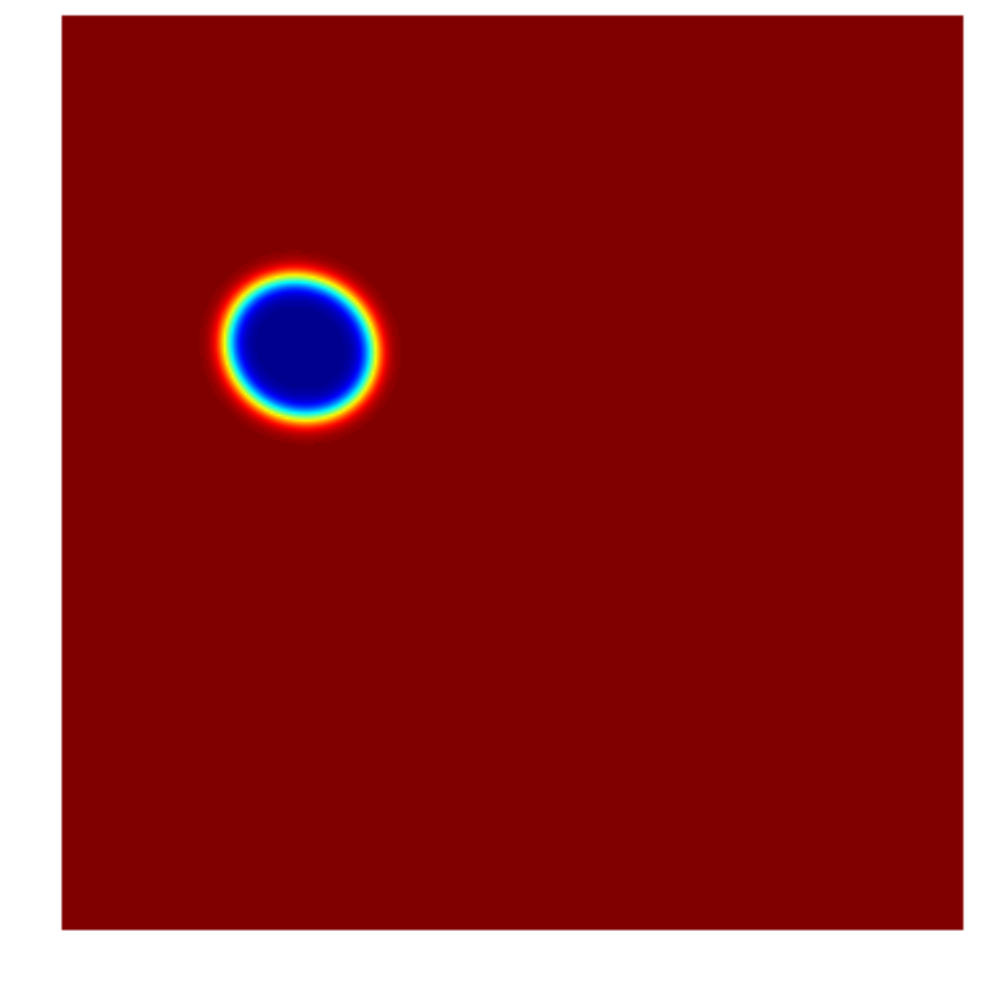}}
     \vspace{-0.2cm}
\caption{Evolution of the phase structure obtained by using sIFRK(2,2) {with $\tau=0.1$}.
From left to right and from top to bottom: $t=1$, $5$, $10$, $50$, $240$ and $440$.}
\label{fig5.4}
\end{figure}

\begin{figure}[!ht]
 \centerline{
   \includegraphics[width=0.49\textwidth]{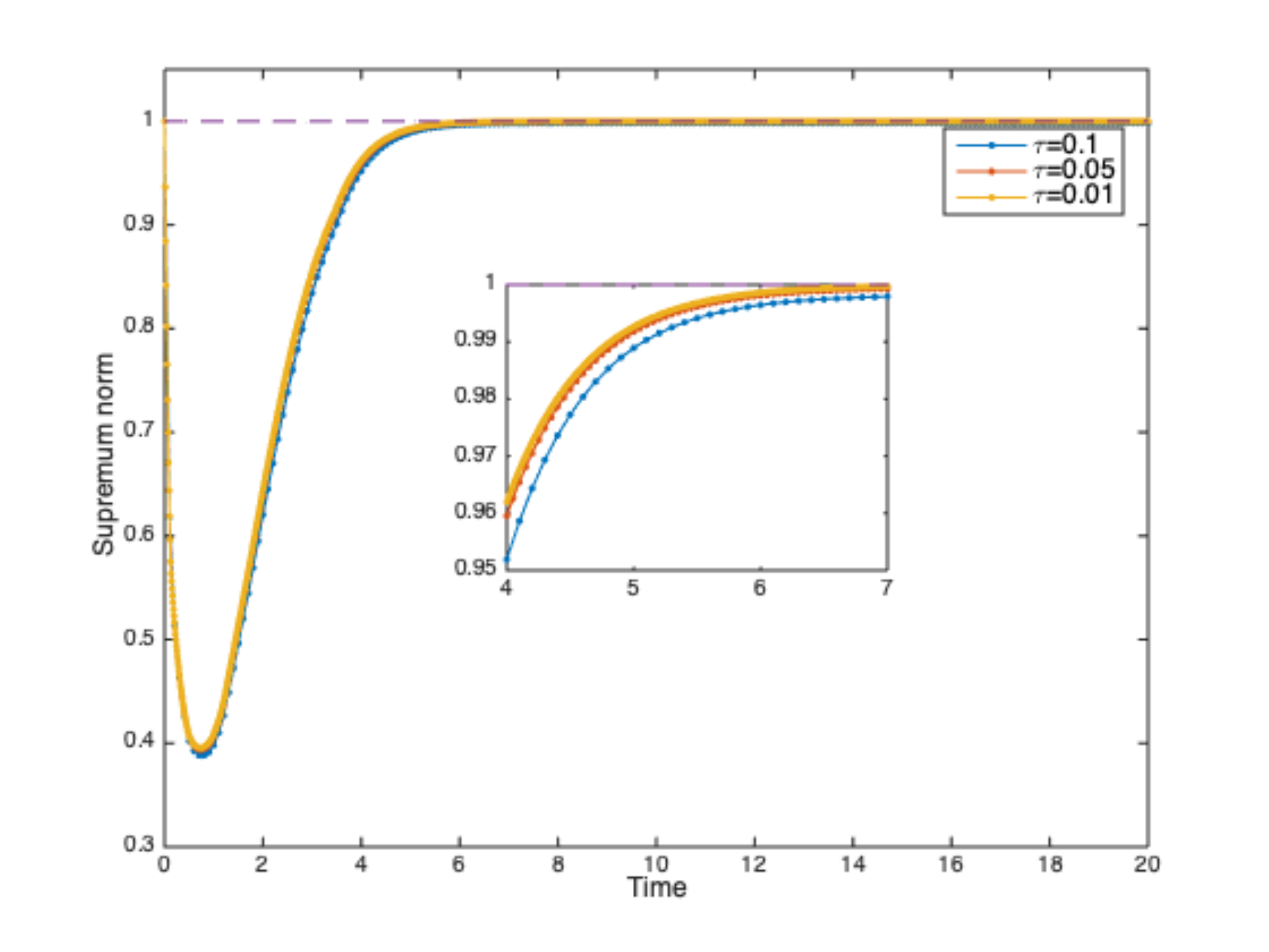}\hspace{-0.5cm}
   \includegraphics[width=0.49\textwidth]{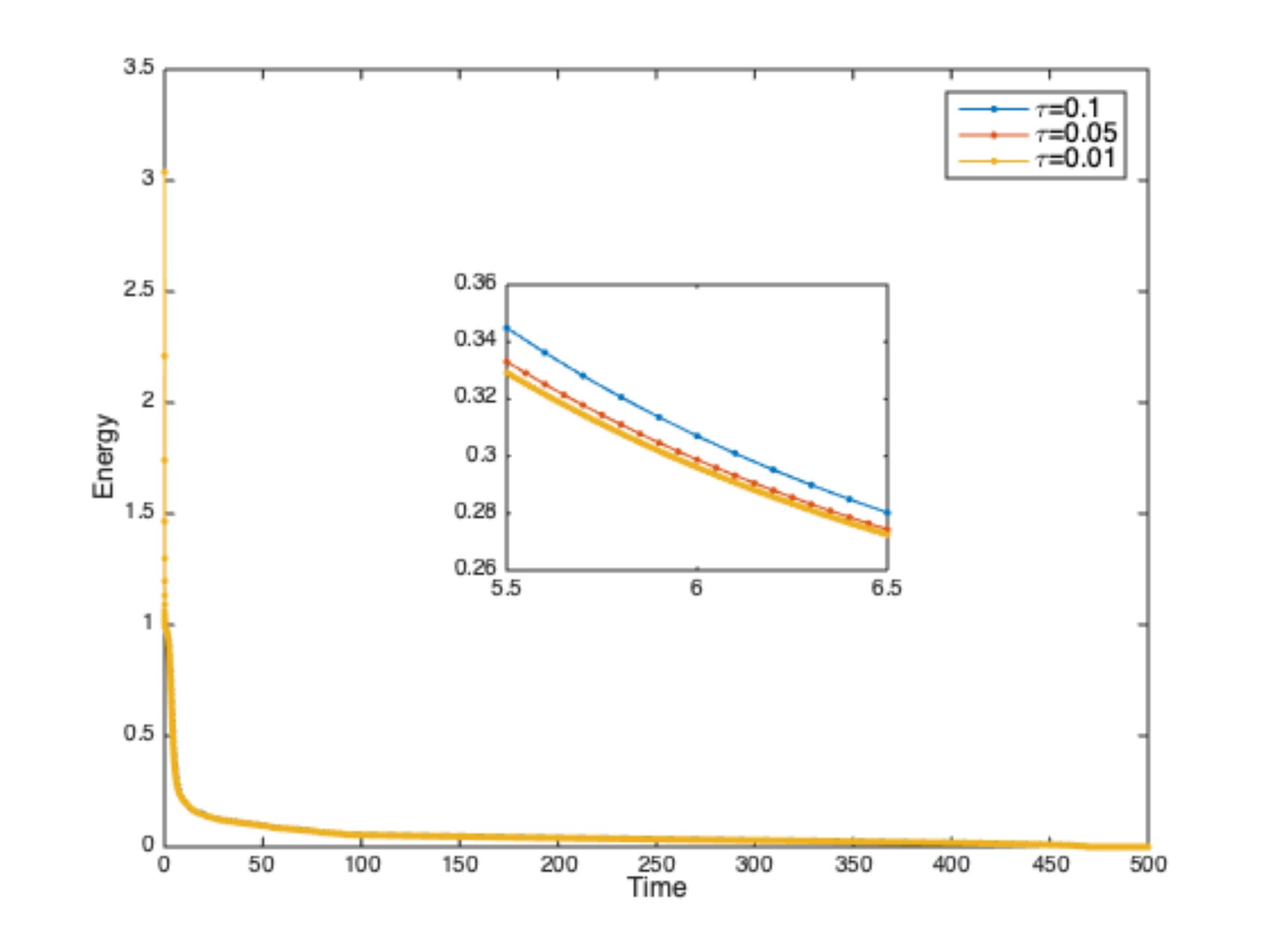}}
     \vspace{-0.2cm}
\caption{Evolutions of the supremum norm (left) and the energy (right) obtained by using sIFRK(2,2) with $\tau=0.1$, $0.05$ and $0.01$.}
\label{fig5.6}
\end{figure}

\subsection{Three-dimensional simulations}

The last experiment is devoted to  numerical simulation
for the 3D Allen-Cahn equation \eqref{eq5.4} with $\epsilon=0.01$ and \eqref{eq5.6}.
We take the domain $\Omega=(-0.5,0.5)^3$ with a uniform spatial mesh of size $h=1/256$,
and generate the initial data by the random numbers ranging from $-0.9$ to $0.9$ on the mesh.
The sIFRK(2,2) scheme is used for the simulation.
\figurename~\ref{fig3d-1} presents the evolutions of the 3D phase structures
at $t=1$, $5$, $10$, $50$, $240$ and $350$ with the time-step size $\tau=0.01$, respectively.
\figurename~\ref{fig3d-2} plots the evolutions of the supremum norm and the energy
of the numerical solutions with different time-step sizes $\tau=0.1$, $0.05$ and $0.01$.
The MBP is well-preserved and the energy decreases monotonically along the time as shown in \figurename~\ref{fig3d-2}.
In addition, we again observe that
there are only very small differences between the curves corresponding to different time-step sizes, which
implies that sIFRK(2,2) still perform very well for 3D simulations even with moderately large time-step sizes.

\begin{figure}[!ht]
  \centerline{\hspace{-0.2cm}
   \includegraphics[width=0.33\textwidth]{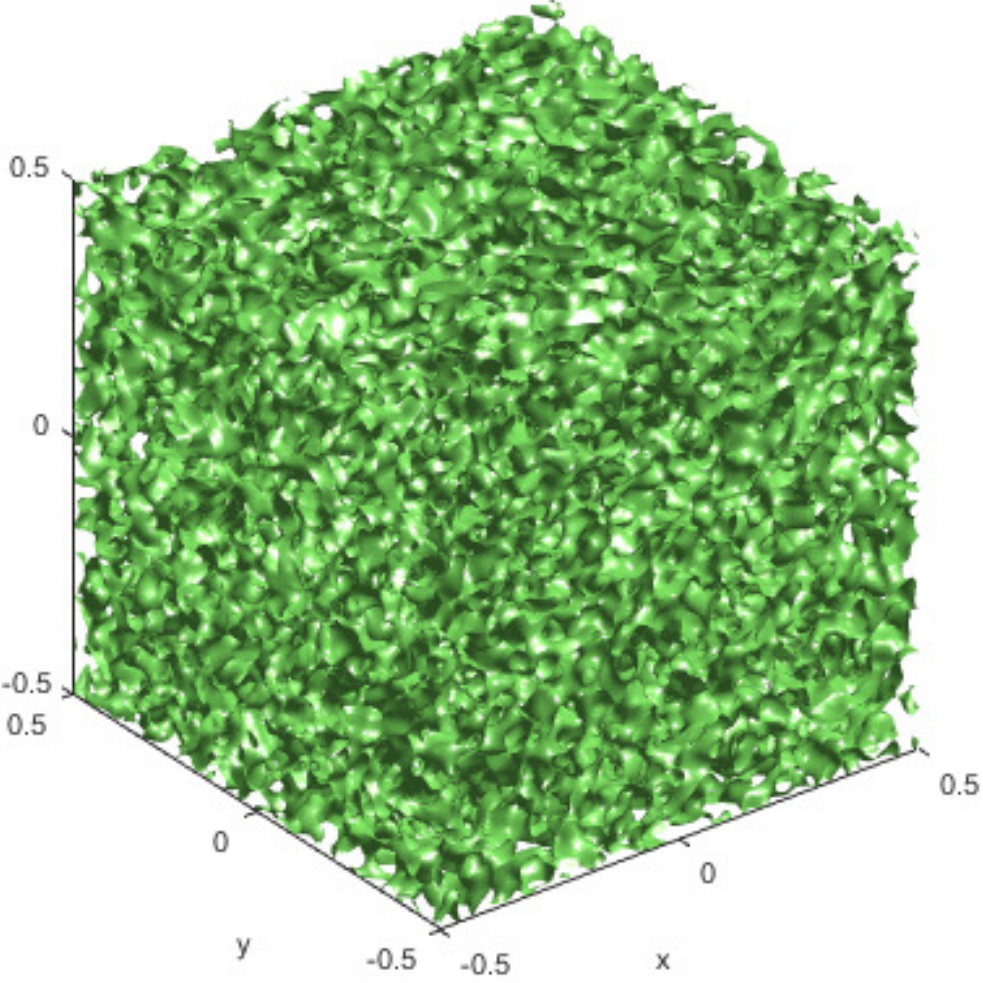}\hspace{-0.1cm}
   \includegraphics[width=0.33\textwidth]{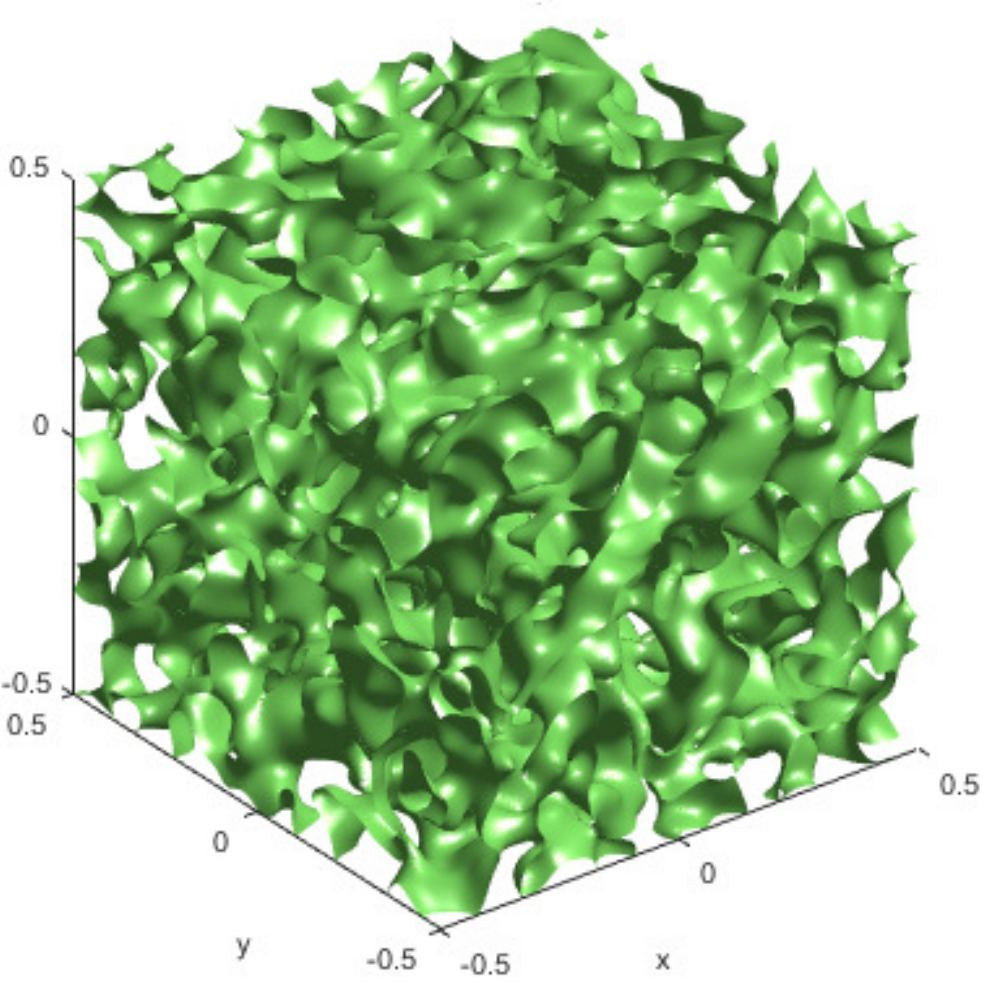}\hspace{-0.1cm}
   \includegraphics[width=0.33\textwidth]{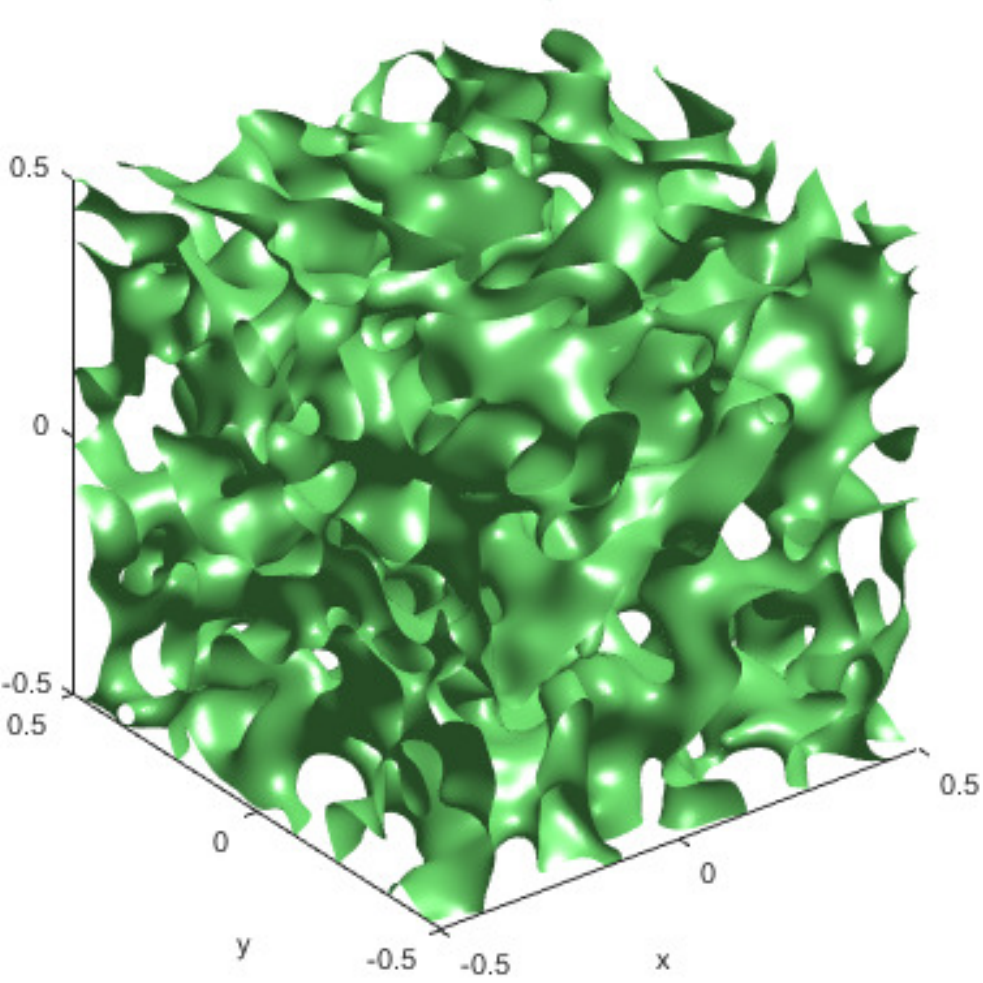}}
   \vspace{0.3cm}
   \centerline{\hspace{-0.2cm}
   \includegraphics[width=0.33\textwidth]{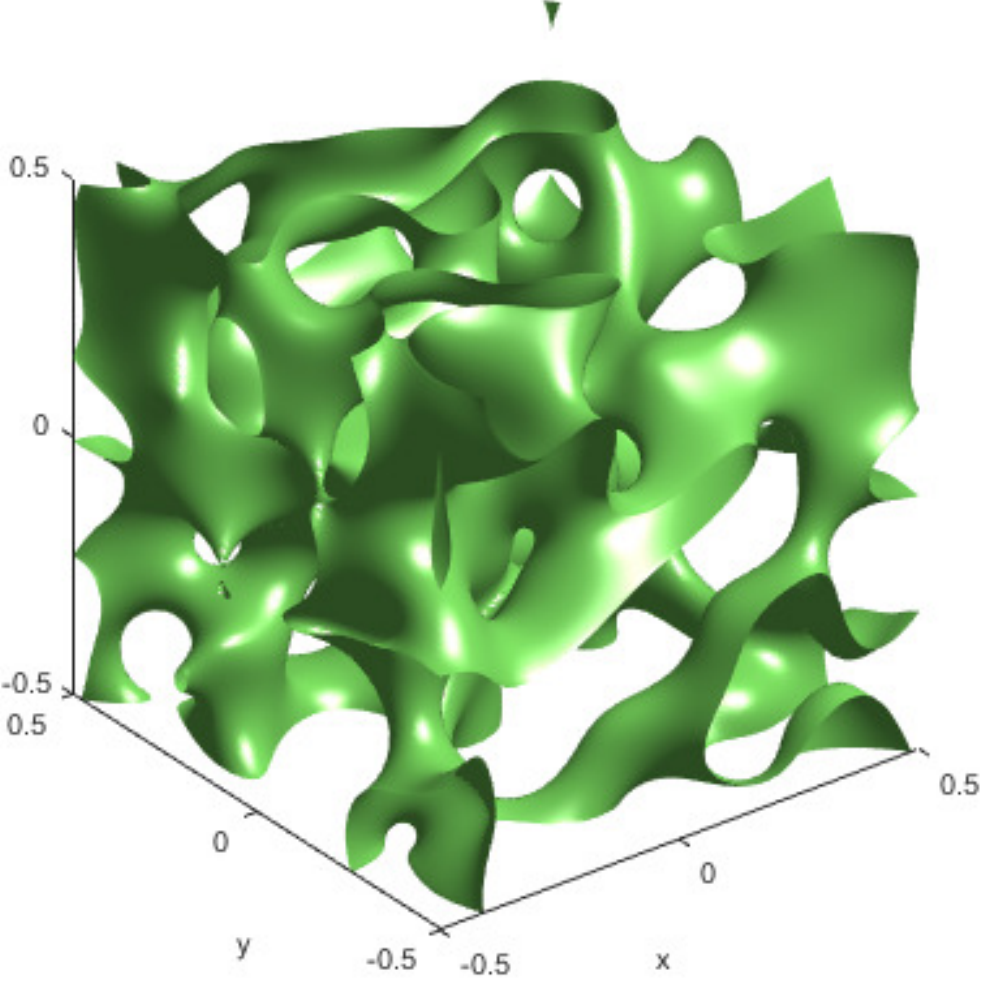}\hspace{-0.1cm}
   \includegraphics[width=0.33\textwidth]{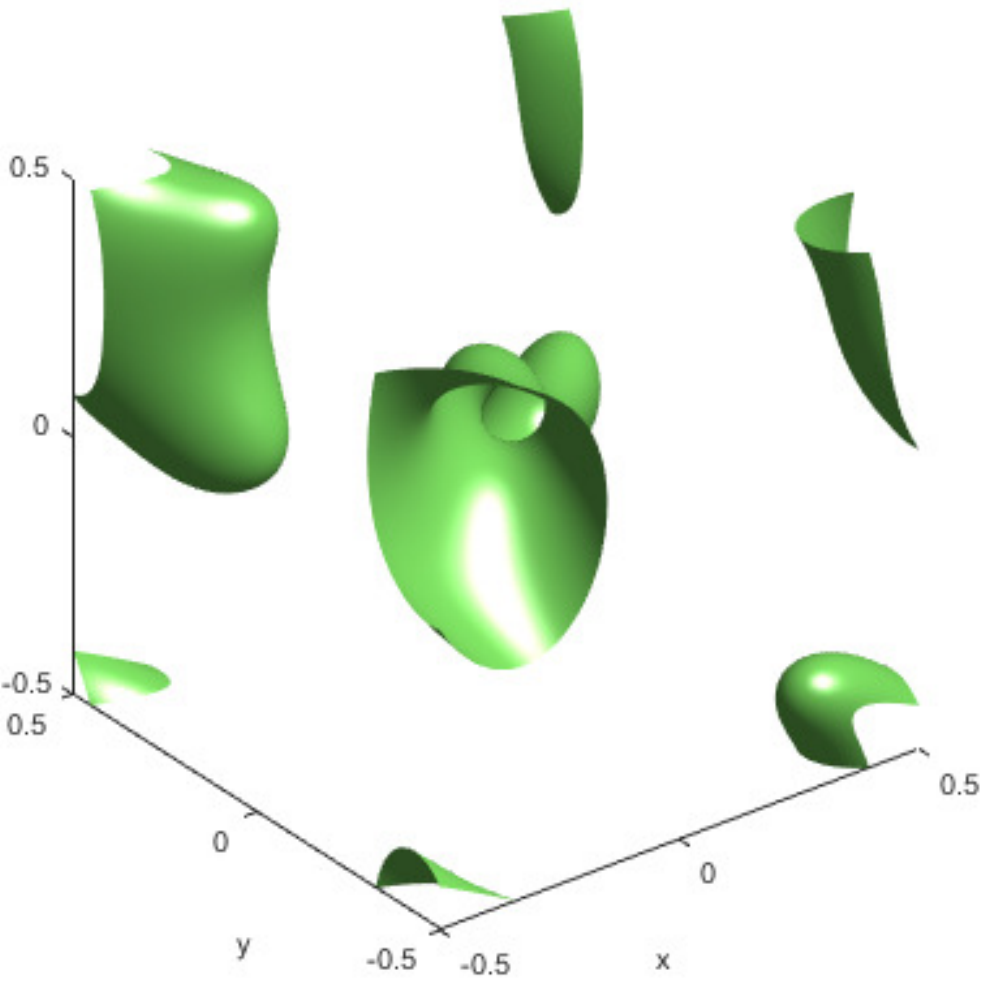}\hspace{-0.1cm}
   \includegraphics[width=0.33\textwidth]{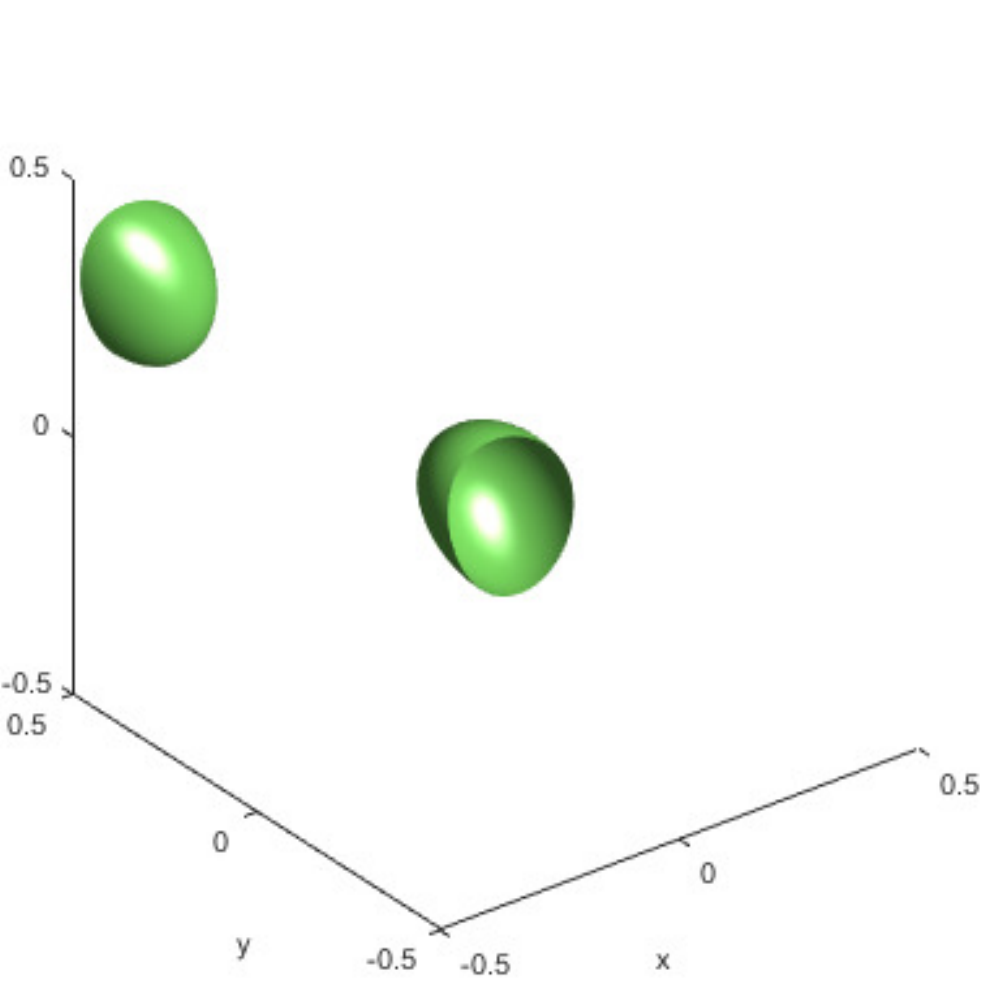}}
\caption{Evolution of the 3D phase structure obtained by using sIFRK(2,2) {with $\tau=0.01$}.
From left to right and from top to bottom: $t=1$, $5$, $10$, $50$, $240$ and $350$.}
\label{fig3d-1}
\end{figure}

\begin{figure}[!ht]
  \centering
   {\includegraphics[width=0.49\textwidth]{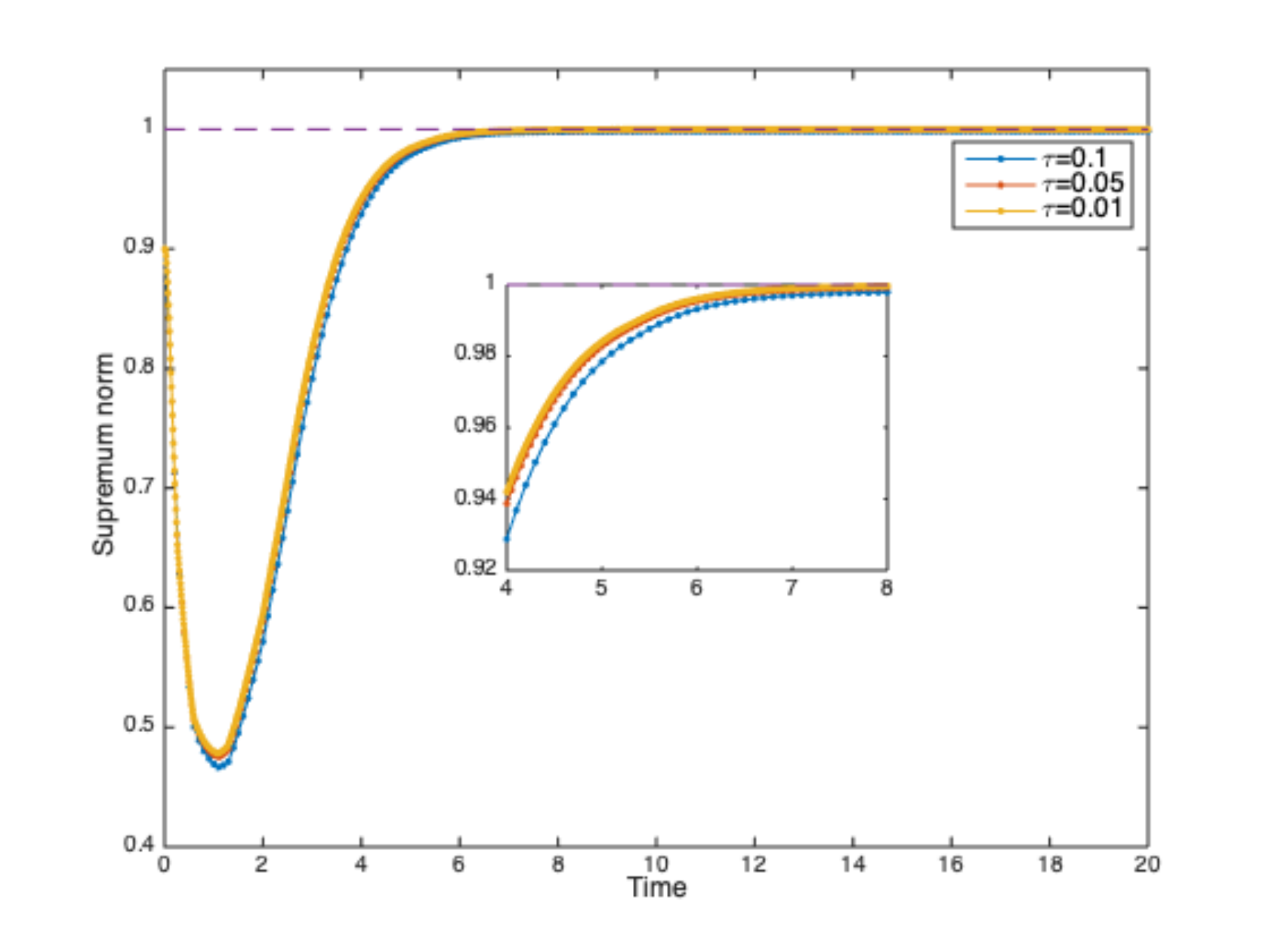}}\hspace{-0.5cm}
   {\includegraphics[width=0.49\textwidth]{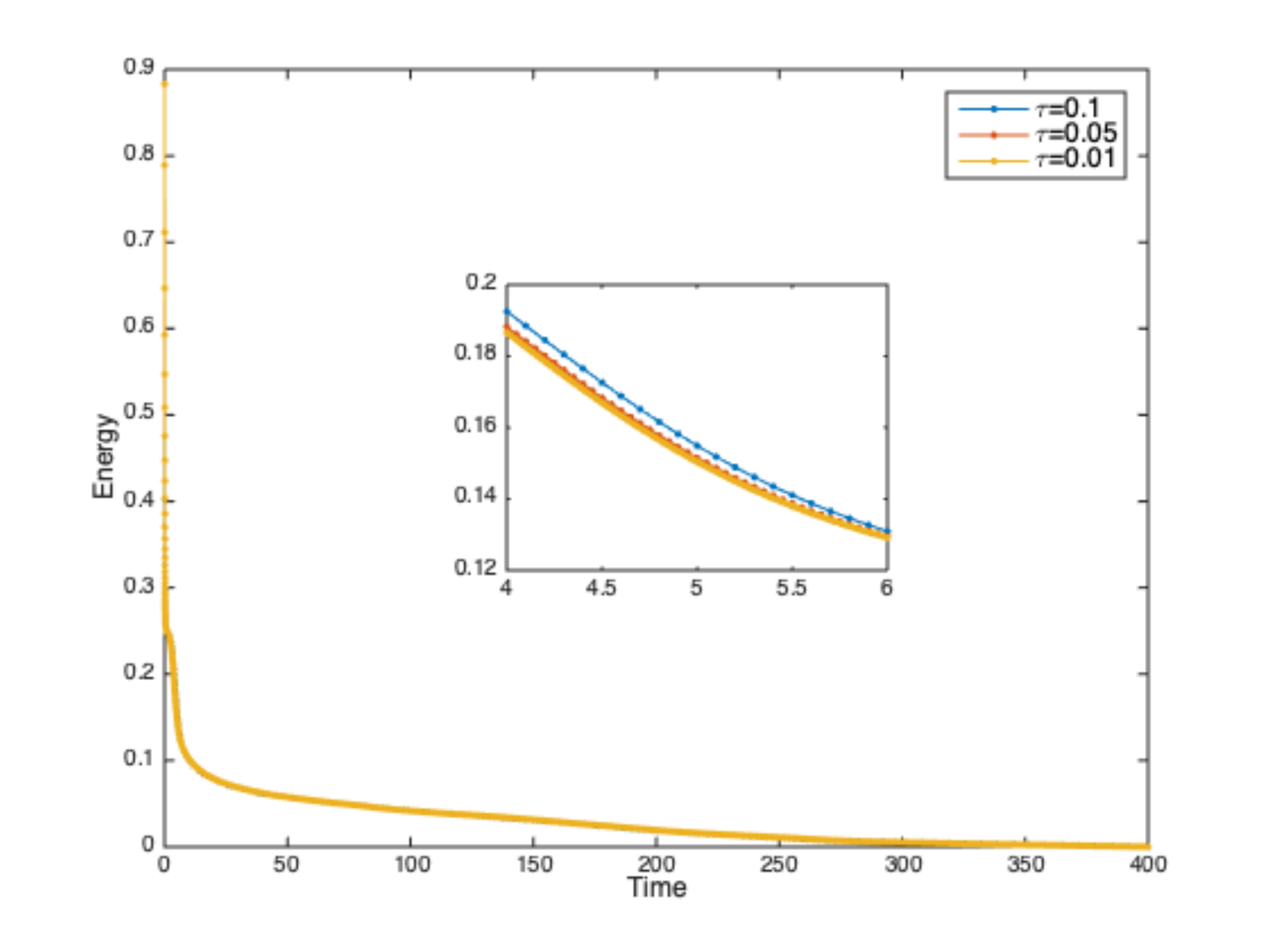}}
    \vspace{-0.2cm}
\caption{Evolutions of the supremum norm (left) and the energy (right) of the 3D simulations obtained by using sIFRK(2,2) with $\tau=0.1$, $0.05$ and $0.01$.}
\label{fig3d-2}
\end{figure}

\section{Conclusion}\label{sec6}

In this work, we first combine the linear stabilization technique with the IFRK method to develop a family of sIFRK schemes.
We derive sufficient conditions to guarantee unconditional preservation of the MBP for the sIFRK method written in different forms.
Based on these conditions, we then check various existing IFRK and SSP-IFRK schemes and identify the unconditionally MBP preserving schemes among them, as well as various numerical demonstrations. In addition,  we also
find that many existing SSP-sIFRK schemes violate these conditions except the first-order one, and thus may not be
unconditionally MBP-preserving as verified in numerical experiments.

One important question remains whether
the conditions in Theorem \ref{thm_mbp} or Theorem \ref{thm_mbp_ssp} are also necessary for unconditional MBP preservation,
which would be a natural topic for our future research. It is also worth mentioning that Assumption \ref{assump_linear} implies that the
operator $\mathcal{L}^h$ is dissipative. Discretizing $\mathcal{L}$ with the central difference method or the finite element method with  mass-lumping
satisfies Assumption  \ref{assump_linear} since the resulting discrete matrix is an $M$-matrix. As we know, the matrix corresponding to
the spectral collocation method is usually not an $M$-matrix. Thus another question is whether such assumption is  necessary
for the space-discrete system to possess the MBP and subsequently for the fully-discrete system.

\section*{Acknowledgement}

The authors would like to thank the editor and anonymous referees for their
valuable comments and suggestions, which have helped us  improve this paper a lot.

\end{document}